\tikzset{inner sep=0pt, 
  root/.style={circle,draw,minimum size=7pt,thick}, 
  fatroot/.style={circle,draw,minimum size=10pt,thick}, 
  short root/.style={circle,fill,minimum size=7pt}, 
  doublearrow/.style={postaction={decorate}, 
  decoration={markings,mark=at position .7
  with {\arrow{angle 60}}},double distance=3pt,thick}
} 
\theoremstyle{plain}
\newtheorem{theorem}{Theorem}[section]
\newtheorem{lemma}[theorem]{Lemma}
\newtheorem{proposition}[theorem]{Proposition}
\newtheorem{corollary}[theorem]{Corollary}
\newtheorem{definition}[theorem]{Definition}
\theoremstyle{remark}
\newtheorem{remark}[theorem]{Remark}
 \numberwithin{equation}{section}
\def\<{\langle}
\def\>{\rangle}
\DeclareMathOperator{\cha}{char}
\DeclareMathOperator{\Frac}{Frac}
\DeclareMathOperator{\Hom}{Hom}
\DeclareMathOperator{\Ad}{Ad}
\DeclareMathOperator{\rank}{rank}
\DeclareMathOperator{\Gal}{Gal}
\DeclareMathOperator{\gr}{gr}
\DeclareMathOperator{\Aut}{Aut}
\DeclareMathOperator{\Sel}{Sel}
\DeclareMathOperator{\ad}{ad}
\DeclareMathOperator{\ord}{ord}
\DeclareMathOperator{\disc}{disc}
\DeclareMathOperator{\Lie}{Lie}
\DeclareMathOperator{\Pic}{Pic}
\DeclareMathOperator{\inv}{inv}
\DeclareMathOperator{\diag}{diag}
\DeclareMathOperator{\im}{im}
\DeclareMathOperator{\Spec}{Spec}
\newcommand{\cC}{{\mathcal C}}
\newcommand{\cE}{{\mathcal E}}
\newcommand{\cF}{{\mathcal F}}
\newcommand{\cJ}{{\mathcal J}}
\newcommand{\cL}{{\mathcal L}}
\newcommand{\cM}{{\mathcal M}}
\newcommand{\cO}{{\mathcal O}}
\newcommand{\cP}{{\mathcal P}}
\newcommand{\cQ}{{\mathcal Q}}
\newcommand{\cV}{{\mathcal V}}
\newcommand{\cX}{{\mathcal X}}
\newcommand{\cY}{{\mathcal Y}}
\newcommand{\frc}{{\mathfrak c}}
\newcommand{\frg}{{\mathfrak g}}
\newcommand{\frh}{{\mathfrak h}}
\newcommand{\frl}{{\mathfrak l}}
\newcommand{\ffrm}{{\mathfrak m}}
\newcommand{\frp}{{\mathfrak p}}
\newcommand{\frs}{{\mathfrak s}}
\newcommand{\frt}{{\mathfrak t}}
\newcommand{\frz}{{\mathfrak z}}
\newcommand{\bbA}{{\mathbb A}}
\newcommand{\bbF}{{\mathbb F}}
\newcommand{\bbG}{{\mathbb G}}
\newcommand{\bbP}{{\mathbb P}}
\newcommand{\bbQ}{{\mathbb Q}}
\newcommand{\bbR}{{\mathbb R}}
\newcommand{\bbZ}{{\mathbb Z}}
\newcommand{\GL}{{\mathrm{GL}}}
\newcommand{\SL}{{\mathrm{SL}}}
\newcommand{\SO}{{\mathrm{SO}}}
\newcommand{\PGL}{{\mathrm{PGL}}}
\title{On the average number of 2-Selmer elements of elliptic curves over $\bbF_q(X)$ with two marked points}
\author{Jack A. Thorne\footnote{This research was partially conducted during the period the author served as a Clay Research Fellow.}}
\begin{document}
\maketitle
\begin{abstract}
We consider elliptic curves over global fields of positive characteristic with two distinct marked non-trivial rational points. Restricting to a certain subfamily of the universal one, we show that the average size of the 2-Selmer groups of these curves exists, in a natural sense, and equals 12. Along the way, we consider a map from these 2-Selmer groups to the moduli space of $G$-torsors over an algebraic curve, where $G$ is isogenous to $\SL_2^4$, and show that the images of 2-Selmer elements under this map become equidistributed in the limit.
\end{abstract}
\tableofcontents
\section{Introduction}\label{sec_introduction}

Let $K$ be a global field. To any elliptic curve $E / K$ and integer $n \geq 1$ not dividing the characteristic of $K$, one can attach the $n$-Selmer group
\[ \Sel_n(E) = \ker(H^1(K, E[n]) \to \prod_v H^1( K_v, E)). \]
The cohomology groups here are Galois cohomology, and the product is over the set of all places $v$ of the global field $K$. The $n$-Selmer group then fits into a short exact sequence of finite abelian groups
\[ \xymatrix@1{ 0 \ar[r] & E(K) / n E(K) \ar[r] & \Sel_n(E) \ar[r] & \mathrm{TS}(K, E)[n] \ar[r] & 0}. \]
Since it is often easier to compute $\Sel_n(E)$ than the group $E(K) / n E(K)$, this provides a useful tool for studying the group of rational points $E(K)$. However, computing $\Sel_n(E)$ for reasonably complicated curves $E$, even when an algorithm is known, can require a large amount of effort. For these reasons, it is of interest to understand the behaviour of the groups $\Sel_n(E)$ on average. Recent years have seen striking progress in problems of this type; for some work of particular relevance to this paper, we refer the reader to any of the papers \cite{deJ02, Bha15, Ngo14}. 

In this paper, we prove new results about the average size of the 2-Selmer group of elliptic curves over global fields of positive characteristic. Such a field is, by definition, the function field $K = \bbF_q(X)$ of an algebraic curve over a finite field. We will consider the universal family of elliptic curves with two marked rational points and calculate the average size of the 2-Selmer groups of the curves in this family satisfying certain conditions. We will accomplish this by relating these 2-Selmer groups to the invariant theory of a representation constructed and studied in \cite{Tho13}, and then counting sections of certain associated vector bundles on $X$. 

In order to state our main theorems precisely, we must introduce some notation. If $E/K$ is an elliptic curve, we can associate its relatively minimal regular model $p_E : \cE \to X$ with identity section $O : X \to \cE$. The isomorphism class of the line bundle $\cL_E = (R^1 p_{E, \ast} \cO_\cE)^{\otimes -1}$ is an invariant of $E$, and there are only finitely many elliptic curves over $K$ up to isomorphism with a given $\cL_E$, this number tending to infinity as $\deg \cL_E \to \infty$.

If $\cL$ is a line bundle on $X$, then we write $\cX_\cL$ for the finite set of isomorphism classes of triples $(E, P, Q)$ as follows:
\begin{enumerate}
\item $E / K$ is an elliptic curve such that $\cL_E \cong \cL^{\otimes 2}$ and the fibres of $p_E$ are all of type $I_0$ or $I_1$.
\item $P, Q \in E(K)$ are distinct non-trivial rational points such that sections $\cO, \cP, \cQ : X \to \cE$ associated to the origin of $E$ and the points $P, Q$, respectively, do not intersect.
\end{enumerate}
Provided that the characteristic of $K$ does not divide 6, an elliptic curve $E$ with two non-trivial marked points can be represented by an equation
\begin{equation}\label{eqn_intro_generalized_weierstrass_equation} Y ( XY + 2 q_4 Z^2) = X^3 + p_2 X^2 Z + p_4 X Z^2 + p_6 Z^3, \end{equation}
which sends the marked points, together with the origin, to the line at infinity. The curves in $\cX_\cL$ are exactly those for which the discriminant $\Delta(p_2, \dots, p_6)$ of this equation vanishes to order at most 1 everywhere, when viewed as a section of $H^0(X, \cL^{\otimes 24})$; see \S \ref{sec_elliptic_curves_with_marked_points} below.

 We can now state our first main theorem.
\begin{theorem}\label{thm_intro_theorem_1}
Suppose that $\cha K > 19$. The limit
\[ \lim_{\deg \cL \to \infty} \sum_{(E, P, Q) \in \cX_{\cL}} \frac{ |\Sel_2(E)| \times |\Aut(E, P, Q)|^{-1} \times |E(K)[2]|^{-1}}{|\cX_{\cL}|} \]
exists and equals 12.
\end{theorem}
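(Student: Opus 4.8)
The plan is to follow the orbit\-parametrisation/counting/sieve paradigm of Bhargava--Shankar\-type Selmer\-average results, transported to $K=\bbF_q(X)$ in the spirit of \cite{deJ02}, with the arithmetic invariant theory supplied by the representation of \cite{Tho13}. The formal starting point is that the limit in Theorem~\ref{thm_intro_theorem_1} is a ratio of (asymptotic) groupoid cardinalities: the numerator equals $|\cN_\cL(\bbF_q)|$ for the stack $\cN_\cL$ classifying a triple $(E,P,Q)$ together with a $2$-Selmer element — the weight $|\Aut(E,P,Q)|^{-1}\,|E(K)[2]|^{-1}$ is precisely the reciprocal automorphism count, the factor $|E(K)[2]|^{-1}$ coming, via orbit--stabiliser, from the translations acting on a Selmer torsor — while the denominator $|\cX_\cL|$, the naive count of isomorphism classes, is asymptotic as $\deg\cL\to\infty$ to the groupoid cardinality $|\cM_\cL(\bbF_q)|$ of the moduli stack of triples, since generic triples have trivial automorphism group. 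I would then realise both stacks as quotients of spaces of global sections of vector bundles on $X$. Let $(G,V)$ be the representation of \cite{Tho13} attached to this family, with $G$ isogenous to $\SL_2^4$, $V$ the space of $2\times2\times2\times2$ hypercubes, and $V/\!\!/G\cong\bbA^4$ carrying the coefficients $(p_2,p_4,p_6,q_4)$ of \eqref{eqn_intro_generalized_weierstrass_equation}; spreading $V$ out over $X$ via twists of $\cL$ produces a bundle $\cV_\cL$ equipped with a map $\pi\colon\cV_\cL\to\cI_\cL$ to a ``bundle of invariants'' (a direct sum of positive tensor powers of $\cL$) refining the categorical quotient. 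The key statement is then: for $(E,P,Q)\in\cX_\cL$, the elements of $\Sel_2(E)$ are in bijection with the everywhere\-locally\-soluble $G$-orbits of sections $s\in H^0(X,\cV_\cL)$ with $\pi(s)$ equal to the Weierstrass tuple of $(E,P,Q)$. This is standard arithmetic invariant theory: the stabiliser in $G$ of a generic vector with these invariants is the group scheme $E[2]$, so $H^1(K,E[2])$ — hence $\Sel_2(E)$ once local conditions are imposed — parametrises the $K$-forms, with the marked points $P,Q$ pinning down the distinguished soluble basepoint orbit; that the soluble orbits are represented by honest sections of $\cV_\cL$, not meromorphic ones, uses that the discriminant vanishes to order $\le 1$, i.e.\ that the fibres are of type $I_0$ or $I_1$, so the minimal integral models are as tame as possible. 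Pushing an $E[2]$-torsor forward along $E[2]\hookrightarrow G$ yields the map $\Sel_2(E)\to\mathrm{Bun}_G(\bbF_q)$ of the abstract.

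\textbf{Counting.} Granting the parametrisation, $|\cN_\cL(\bbF_q)|$ becomes a weighted count of soluble $G$-orbits of sections of $\cV_\cL$ with squarefree $\pi(s)$, which I would organise as a sum over $\mathrm{Bun}_G(\bbF_q)$: the soluble orbits lying over a given section are controlled by the cohomology of the associated $E[2]$-torsor, and for each $G$-torsor $\cP$ on $X$ the number of global sections of the twisted bundle $\cV_{\cL,\cP}$ is, for $\deg\cL$ large, computed by Riemann--Roch, with $H^1(X,\cV_{\cL,\cP})=0$ outside a locus of $\cP$ whose contribution is negligible. Weighted by $|\Aut\cP|^{-1}$ and suitably normalised, the resulting sum over $\mathrm{Bun}_G(\bbF_q)$ is the volume delivered by the Siegel formula / Weil conjecture on Tamagawa numbers for $G$; since $G$ is isogenous to $\SL_2^4$, this Tamagawa number is explicitly known (it is $1$ for $\SL_2^4$ itself), and the volume is an explicit product of local factors. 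The denominator $|\cM_\cL(\bbF_q)|$ is a parallel, simpler count of Weierstrass data with squarefree discriminant, again via Riemann--Roch and the Grothendieck--Lefschetz trace formula. Taking the ratio and letting $\deg\cL\to\infty$, the Riemann--Roch leading terms cancel and what survives is a convergent Euler product of local ratios of densities.

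\textbf{Local densities and the value $12$.} At a place $v$ of good reduction the local ratio is the $G(\cO_v)$-orbit count of soluble vectors with given invariant, weighted by orbit size, which evaluates to $|E(K_v)/2E(K_v)|$ amplified by the two classes the marked points contribute (these being $v$-adically soluble by construction). Pairing the product over all $v$ against the Tamagawa volume of the previous step, and accounting for the finitely many places of multiplicative ($I_1$) reduction — where the $I_0/I_1$ hypothesis keeps the local models tractable — the product telescopes to $2^2\cdot 3=12$: the classical $2$-Selmer average $3$, multiplied by the factor $2^2$ forced by the two marked points. This is exactly the equidistribution asserted in the abstract: the images of $2$-Selmer elements equidistribute in $\mathrm{Bun}_G(\bbF_q)$ against the measure $|\Aut|^{-1}$, and $12$ is the expected size of the fibre.

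\textbf{The main obstacle.} I expect the crux to lie in the tail estimates: showing that the reducible and non-soluble orbits, and — more delicately — the sections for which $\cV_{\cL,\cP}$ is special (so $H^1\neq 0$) or for which $\pi(s)$ fails to be genuinely squarefree, contribute negligibly to both $|\cN_\cL(\bbF_q)|$ and $|\cX_\cL|$, \emph{uniformly as $\deg\cL\to\infty$}. Over $\bbZ$ this is the delicate geometry\-of\-numbers analysis of the cusps of a fundamental domain; over $\bbF_q(X)$ it becomes a uniformity statement for point counts on moduli stacks, together with a squarefree sieve over the function field, and controlling the ``bad'' locus of $G$-torsors (and the bad fibres) uniformly in the degree is where I expect the real work — and where the hypothesis $\cha K>19$ should enter, to guarantee that the invariant theory and attendant smoothness statements of \cite{Tho13} hold in characteristic $p$.
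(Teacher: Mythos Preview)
Your overall architecture matches the paper's: parametrise Selmer elements by $G$-orbits in the representation $V$ of \cite{Tho13}, push to $\cY_G=G(K)\backslash G(\bbA_K)/G(\widehat\cO_K)$ (your $\mathrm{Bun}_G$), count sections of the twisted bundles by Riemann--Roch, and close with a Tamagawa-number computation. You have also correctly located the crux in the tail estimates. But two concrete points separate your sketch from a working argument.

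\textbf{The value $12$ and the Tamagawa number.} You note $\tau(\SL_2^4)=1$ and then obtain $12$ as $2^2\cdot 3$ from local densities. In the paper the decomposition is \emph{additive}, $12=8+4$: the $8$ is $\tau(G)$ for the group $G=(\SO_4\times\SO_4)/\Delta(\mu_2)$ that actually acts on $V$, which has $\pi_1(G)\cong\mu_2^3$ (isogenous groups do not share Tamagawa numbers), and it is the average number of \emph{non-trivial} Selmer elements; the $4$ is the average of $|A_{(E,P,Q)}|$, the subgroup generated by $P,Q$, established separately by a Hilbert-irreducibility argument (Corollary~\ref{cor_average_2_selmer}). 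No Euler product of the kind you describe is assembled; the only local input is Proposition~\ref{prop_existence_and_equality_of_local_squarefree_densities}, which matches the squarefree densities on $V$ and on $B$ via an integration formula, and its output cancels against $\delta_B^{-1}$ rather than producing a factor of $3$.

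\textbf{Separating the trivial classes is structural, not cosmetic.} The paper shows (Lemma~\ref{lem_geography_of_trivial_orbits}) that the four trivial orbits $w\kappa_b$, $w\in W_0$, have invariant equal to the $W_0$-translates of $\check\rho(D)$ and hence lie in the most unstable Borel stratum $\cY_{G,B}(D)^{<0}$ --- exactly the region that must be discarded in the cusp cutting. If you count all of $\Sel_2$ at once, the main-term/dominated-convergence step still yields $\tau(G)=8$, but the trivial classes are swept into the cusp; your tail estimate would then have to \emph{recover} a contribution of $4$ from the cusp rather than show it is $o(1)$, which is a different and harder statement. The paper avoids this by working from the outset with $\Sel_2(E)^{\mathrm{nt}}=\Sel_2(E)\smallsetminus A_{(E,P,Q)}$. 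On each unstable stratum, indexed by an order-ideal $M\subset\Phi_V$ of the weight hypercube, one then proves the dichotomy that every section is either reducible ($\Delta=0$) or trivial (Corollary~\ref{cor_identification_of_reducible_or_weierstrass_orbits}); since both possibilities are excluded from the ``squarefree, non-trivial'' count, the strata with $|M|\geq 4$ contribute literally zero, and the finitely many with $|M|\leq 3$ are dispatched by the explicit table in the proof of Theorem~\ref{thm_average_of_non_trivial_selmer}. That dichotomy is the concrete content of the cusp-cutting you flag as the obstacle, and it only bites once the trivial elements have been removed.
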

\begin{remark}
\begin{enumerate}
\item This result is what one might expect given known results about the 2-Selmer groups of elliptic curves without marked points: for the curves in our family, there is a `trivial subgroup' $A_{(E, P, Q)} \subset \Sel_2(E)$, generated by the classes of the points $P$, $Q$, and which generically has size 4. It follows that the remainder $\Sel_2(E) / A_{(E, P, Q)}$ should have average size 3. 
\item We believe that the weighting of Selmer elements by automorphisms is natural; similarly for the weighting by $K$-rational 2-torsion points (which can be thought of as $K$-rational automorphisms of the trivial 2-covering $[2] : E \to E$). In fact, the contribution of $E(K)[2]$ can be suppressed: for the curves we consider, the groups $E(K)[\text{tors}]$ are trivial (because they inject into the product of fibral component groups; but these component groups are all trivial, by hypothesis). 
\item The restriction on the characteristic arises because we need to apply Jacobson--Morozov style results to the Lie algebra over $\bbF_q$ of type $D_4$, for example in the construction of the Kostant section (see Proposition \ref{prop_existence_of_kostant_section} below). It may be possible to relax this restriction slightly.
\end{enumerate}
\end{remark}
Let $G = (\mathrm{SO}_4 \times \mathrm{SO}_4) / \Delta(\mu_2)$, where $\mathrm{SO}_4$ is the split special orthogonal group over $\bbF_q$, and $\mu_2$ is its centre. A key role in our proof of Theorem \ref{thm_intro_theorem_1} is played by a family of canonically defined invariant maps 
\begin{equation}\label{eqn_intro_invariant_map}
\inv = \inv_{(E, P, Q)} : \Sel_2(E) \to G(K) \backslash G(\bbA_K) / \prod_v G(\cO_{K_v}). 
\end{equation}
In fact, our consideration of these maps leads to the following generalization of Theorem \ref{thm_intro_theorem_1}, which is a kind of equidistribution result:
\begin{theorem}\label{thm_intro_theorem_2}
Suppose that $\cha K > 19$. Let $f : G(K) \backslash G(\bbA_K) / \prod_v G(\cO_{K_v}) \to \bbR$ be a bounded function, and let $\tau_G$ denote the Tamagawa measure on $ G(K) \backslash G(\bbA_K) / \prod_v G(\cO_{K_v})$. Then the limit
\[ \lim_{\deg \cL \to \infty} \sum_{(E, P, Q) \in \cX_{\cL}} \sum_{x \in \Sel_2(E) - A_{(E, P, Q)}} \frac{ f(\inv x) \times |\Aut(E, P, Q)|^{-1} \times |E(K)[2]|^{-1}}{|\cX_{\cL}|} \]
exists and equals $\int_{g \in G(K) \backslash G(\bbA_K) / \prod_v G(\cO_{K_v})} f(g) \,d \tau_G$.
\end{theorem}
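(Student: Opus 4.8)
The plan is to translate the weighted double sum into a count of $\bbF_q$-points of a moduli space of sections of a vector bundle on $X$, fibred over the moduli of $G$-bundles, to evaluate this count to leading order by Riemann--Roch, and to recognise the resulting weighted sum over $G$-bundles as an instance of the Siegel mass formula for $G$ over the function field $K$. Theorem~\ref{thm_intro_theorem_1} will then follow from the case $f \equiv 1$ together with a direct analysis of $A_{(E,P,Q)}$: one has $\int 1\, d\tau_G = \tau(G) = 8$ for $G = (\SO_4 \times \SO_4)/\Delta(\mu_2)$ (computed, say, from $\tau(\SL_2^4) = 1$ and the behaviour of Tamagawa numbers under isogeny), while for the curves in the family $A_{(E,P,Q)}$ generically has size $4$ and $E(K)[2] = 0$, so that the average of $|\Sel_2(E)|\cdot|\Aut(E,P,Q)|^{-1}\cdot|E(K)[2]|^{-1}$ is $8 + 4 = 12$.

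\emph{Step 1: invariant theory and the map $\inv$.} Using the representation $(G,V)$ of \cite{Tho13} and its invariant quotient $V /\!/ G$, I would first identify, for each line bundle $\cL$, the set $\cX_\cL$ with the $\bbF_q$-points of an open substack of the affine space of global sections on $X$ of the bundle of invariants built from $V /\!/ G$ and suitable powers of $\cL$, the open condition being exactly that the discriminant section vanish to order at most $1$ at every point of $X$. For $(E,P,Q) \in \cX_\cL$ corresponding to such a section $b$, I would then establish --- extending the orbit parametrisation of \cite{Tho13}, and using the Kostant section of Proposition~\ref{prop_existence_of_kostant_section} to fix base points --- a bijection between $\Sel_2(E)$ and the set of $G(K)$-orbits in the fibre $V_b(K)$ that are soluble at every place, under which $A_{(E,P,Q)}$ corresponds to the orbit through the Kostant section. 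Any everywhere-soluble orbit acquires an integral representative at each place, well defined up to $G(\cO_{K_v})$; gluing these representatives produces a $G$-torsor on $X$ equipped with a generic trivialisation, that is, an element of $G(K)\backslash G(\bbA_K)/\prod_v G(\cO_{K_v})$, and this is the map $\inv$ of \eqref{eqn_intro_invariant_map}.

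\emph{Step 2: reorganisation and point-counting.} Fix a class $\cT \in G(K)\backslash G(\bbA_K)/\prod_v G(\cO_{K_v})$, and let $\cV_\cL^{\cT}$ denote the twist by $\cT$ of the vector bundle $\cV_\cL$ on $X$ associated to $V$ and the powers of $\cL$; it has the same bundle of invariants as $\cV_\cL$. The twisting formalism identifies the everywhere-soluble orbits with $\inv = \cT$, lying over varying $b \in \cX_\cL$ and distinct from the Kostant orbit, with the $\Aut(\cT)$-orbits of generic global sections $v \in H^0(X, \cV_\cL^{\cT})$ whose invariants lie in $\cX_\cL$; here the weights $|\Aut(E,P,Q)|^{-1}$ and $|E(K)[2]|^{-1}$ in the theorem are exactly the automorphisms of the pair $((E,P,Q),x)$, the second factor being the automorphisms of $x$ as a $2$-covering and matching the stabiliser of the corresponding section, so that the left-hand side of Theorem~\ref{thm_intro_theorem_2} is a weighted groupoid cardinality of an open substack of the total space of a vector bundle over $G(K)\backslash G(\bbA_K)/\prod_v G(\cO_{K_v})$. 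For $\deg\cL$ large the group $H^1(X,\cV_\cL^{\cT})$ vanishes for all $\cT$ outside a family whose contribution to the count is negligible, and then $h^0(X,\cV_\cL^{\cT})$ is, by Riemann--Roch, a constant $N(\cL)$ independent of $\cT$ (the degree of $\cV_\cL^{\cT}$ being independent of $\cT$, because $G$ is semisimple). Counting $\bbF_q$-points of the total space as $q^{N(\cL)}$ times the groupoid cardinality of its base, removing the non-generic locus (reducible orbits, or sections whose discriminant vanishes too highly), and matching the product over places of the local densities of good sections with the local Tamagawa volumes $\vol_{\tau_G}(G(\cO_{K_v}))$, one is led to
\[
\lim_{\deg\cL\to\infty}\ \sum_{(E,P,Q)\in\cX_\cL}\ \sum_{x \in \Sel_2(E)-A_{(E,P,Q)}} \frac{f(\inv x)\,|\Aut(E,P,Q)|^{-1}\,|E(K)[2]|^{-1}}{|\cX_\cL|} = \sum_{\cT} \frac{f(\cT)\,\vol_{\tau_G}(\prod_v G(\cO_{K_v}))}{|\Aut(\cT)|},
\]
and the right-hand side is by definition $\int f\, d\tau_G$; for $f\equiv 1$ this passage from the geometric count to the right-hand side is, in effect, the function-field Siegel mass formula for $G$.

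\emph{Main obstacle.} The essential difficulty is the negligibility estimate used in Step~2: one must show that the locus of sections $v\in H^0(X,\cV_\cL^{\cT})$ which are non-generic --- those whose invariants' discriminant vanishes to order $\ge 2$ somewhere on $X$, or whose associated orbit degenerates --- has codimension tending to $\infty$ with $\deg\cL$, uniformly in $\cT$. This rests on a precise understanding of the discriminant hypersurface of $(G,V)$ and of the stratification of $V$ by orbit type over $\bbF_q$, and it is exactly here that the Jacobson--Morozov and Kostant-section machinery for the Lie algebra of type $D_4$ is needed, forcing $\cha K > 19$. A secondary technical point is controlling the contribution of the unstable $G$-bundles $\cT$, where $H^1(X,\cV_\cL^{\cT})$ need not vanish; this is handled using the Harder--Narasimhan stratification of the moduli of $G$-bundles, as in the proof of the Siegel formula over function fields. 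The remaining matching of normalisations --- reconciling the power of $q$ coming from Riemann--Roch with the local Tamagawa volumes, and the groupoid weights with the factors $|\Aut(E,P,Q)|^{-1}$ and $|E(K)[2]|^{-1}$ --- is conceptually routine but is where the number $12$, respectively the measure $\tau_G$, is actually produced, and so must be carried out with care.
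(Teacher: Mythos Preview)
Your overall architecture matches the paper's: translate the Selmer count into a count of sections of the twisted bundles $\cV_g(D)$ fibred over $\cY_G = G(K)\backslash G(\bbA_K)/G(\widehat\cO_K)$, use Riemann--Roch where all Harder--Narasimhan slopes of $\cV_g(D)$ exceed $2g_X-2$, and match the resulting squarefree density against the Tamagawa measure via an integral identity (the paper's Proposition~\ref{prop_existence_and_equality_of_local_squarefree_densities}). One small correction: $A_{(E,P,Q)}$ corresponds not to the single Kostant orbit but to the four trivial orbits $\{w\cdot\kappa_b : w\in W_0\}$ (Theorem~\ref{thm_rational_orbits_and_rational_points}).

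However, your ``main obstacle'' paragraph misidentifies the actual difficulty in a way that would block the proof. The problem is \emph{not} a codimension estimate uniform in $\cT$: for highly unstable $\cT$ the non-generic locus can have codimension zero in $H^0(X,\cV_\cL^\cT)$, so no such uniform bound exists. What the paper does instead is stratify $\cY_G$ by canonical parabolic and by whether the lowest slope of $\cV_g(D)$ is $>2g_X-2$, in $[0,2g_X-2]$, or $<0$. In the first region Riemann--Roch gives the main term by dominated convergence; the middle region is trivially bounded. The cuspidal region $\cY_{G,P}(D)^{<0}$ is where the work lies, and there the paper establishes a \emph{structural vanishing}: Corollary~\ref{cor_identification_of_reducible_or_weierstrass_orbits} shows, via the hypercube combinatorics of the weights $\Phi_V$, that once enough weight-components of $v$ are forced to vanish by the slope conditions, $v$ is either singular ($\Delta=0$) or lies in a trivial orbit. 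Thus $H^0(X,\cV_g(D))^{\text{sf,nt}}$ is literally empty over most of the cusp, and an explicit residual estimate (the table in the proof of Theorem~\ref{thm_average_of_non_trivial_selmer}) handles the finitely many remaining strata.

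A second ingredient absent from your sketch is Lemma~\ref{lem_geography_of_trivial_orbits}: every trivial Selmer element has $\inv$ landing in $\cY_{G,B}(D)^{<0}$. This is what allows one to replace $H^0(\cdot)^{\text{sf,nt}}$ by $H^0(\cdot)^{\text{sf}}$ in the main-body region before invoking dominated convergence; without it the non-trivial condition would obstruct the passage to the limit there. So the two key lemmas you are missing are complementary: trivial orbits live only in the cusp, and the cusp contains only trivial (or singular) orbits.
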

 Taking $f = 1$ to be the constant function, we recover Theorem \ref{thm_intro_theorem_1} (after accounting for the average number of elements in the group $A_{(E, P, Q)}$, which is a simple task). In general, Theorem \ref{thm_intro_theorem_2} can be interpreted as saying that the invariants of non-trivial Selmer elements of elliptic curves in $\cX_\cL$ become equidistributed in $G(K) \backslash G(\bbA_K) / \prod_v G(\cO_{K_v})$ as $\deg \cL \to \infty$. It would be very interesting to get a better understanding of this phenomenon, which persists in other situations (for example, in the case of 2-Selmer groups of elliptic curves without marked points, in which case $G$ should be replaced by the group $\PGL_2$). Can one relate Theorem \ref{thm_intro_theorem_2} to existing conjectures about statistics of ranks of 2-Selmer groups, as in \cite{Poo12}? 

The proofs of Theorem \ref{thm_intro_theorem_1} and Theorem \ref{thm_intro_theorem_2} rely on a connection between the universal family of elliptic curves $(E, P, Q)$ with two marked points and a certain representation $(G, V)$ which was analyzed in \cite{Tho13} from the point of view of Vinberg theory, and which is constructed using the adjoint group over $\bbF_q$ of type $D_4$; the link here exists because the family of curves (\ref{eqn_intro_generalized_weierstrass_equation}) is a miniversal deformation of the simple curve singularity of type $D_4$.  This connection reduces the problem of counting elements of Selmer groups to that of counting orbits in certain representations of $V$. Using the map $\inv$ described above, we reduce this to a problem of counting sections of certain vector bundles over $X$. 

An interesting point in our proof is the calculation of the invariants of trivial elements of the 2-Selmer group. We can describe these explicitly using the principal cocharacter of the ambient group $H$ of type $D_4$ (inside which the pair $(G, V)$ is constructed); see Lemma \ref{lem_geography_of_trivial_orbits}. This gives a quantitative version of the intuitive statement that `trivial elements appear far into the cusp of $V$'. 

Aside from the intrinsic interest of results like Theorem \ref{thm_intro_theorem_1}, one of our motivations was to understand how the techniques of Bhargava--Shankar  for counting integral orbits in coregular representations (see e.g.\ \cite{Bha15}) can be transferred to this function field setting. Instead of reduction theory we use the Harder--Narasimhan (or Shatz) stratification of the space $G(K) \backslash G(\bbA_K) / \prod_v G(\cO_{K_v})$ by the canonical reduction of $G$-torsors. After some reinterpretation, we find that the methods of Bhargava--Shankar are still very effective. In particular, the technique of `cutting off the cusp' works in a very similar way (compare e.g.\ \cite[\S 5]{Tho14} and the proof of Theorem \ref{thm_average_of_non_trivial_selmer} below).

We have restricted ourselves to pointed curves $(E, P, Q)$ satisfying conditions 1.\ and 2.\ above, since this simplifies our analysis of the invariant map (\ref{eqn_intro_invariant_map}). From the point of view of the invariant theory of $(G, V)$, it corresponds to restricting to orbits with square-free discriminant $\Delta$. It would be possible to remove this restriction, at the cost of a more detailed analysis of integral orbits; for example, the invariant map would become multi-valued, since the uniqueness of integral representatives (see Theorem \ref{thm_properties_of_integral_square_free_orbits}) does not hold in general. Compare \cite[\S 3.2]{Bha15} for the kinds of problems that arise. 

We now describe the structure of this paper. In \S \ref{sec_elliptic_curves_with_marked_points}, we introduce the universal family of elliptic curves with two marked points, and study their projective embeddings and integral models. In \S \ref{sec_invariant_theory}, we introduce the representation $(G, V)$ and describe its invariant theory. We also introduce the discriminant $\Delta$ and the important notion of trivial orbits in $G(K) \backslash V(K)$; these are the orbits that will eventually correspond to elements of the trivial subgroup $A_{(E, P, Q)}$ of the 2-Selmer group. We also give some useful criteria for elements in $V(K)$ either to have vanishing discriminant, or to lie in a trivial orbit. In \S \ref{sec_semistability_and_integration}, we describe the Harder--Narasimhan stratification of $G(K) \backslash G(\bbA_K) / \prod_v G(\cO_{K_v})$ (at the level of points only) and the relation between summing over strata and integrating over the adelic points of parabolic subgroups of $G$. Finally, in \S \ref{sec_counting_2_selmer}, we describe the relation between the pair $(G, V)$ and the family of curves (\ref{eqn_intro_generalized_weierstrass_equation}), and exploit this to prove our main theorems Theorem \ref{thm_average_of_non_trivial_selmer} and Theorem \ref{thm_equidistribution_of_non_trivial_selmer}.

\subsection{Notation}
In this paper, we will generally use the letter $K$ to denote a global field of positive characteristic, therefore the function field $\bbF_q(X)$ of a smooth, projective, geometrically connected curve $X$ over $\bbF_q$. If $v$ is a place of $K$, then we will write $K_v$ for the completion of $K$ at $v$, $\cO_{K_v}$ for the ring of integers of $v$, and $\varpi_v \in \cO_{K_v}$ for a choice of uniformizer. We will write $\ord_{K_v} : K_v^\times \to \bbZ$ for the corresponding normalized discrete valuation, $k(v) = \cO_{K_v} / (\varpi_v)$ for the residue field, and $q_v = |k(v)|$ for the cardinality of the residue field. We will generally fix a separable closure $K^s / K$ and separable closures $K^s_v / K_v$, together with compatible embeddings $K^s \hookrightarrow K^s_v$. We then define $\Gamma_K = \Gal(K^s / K)$ and $\Gamma_{K_v} = \Gal(K_v^s / K_v)$; there are canonical maps $\Gamma_{K_v} \to \Gamma_K$. We let $\kappa(v)$ denote the residue field of $K_v^s$, which is an algebraic closure of $k(v)$. We write $I_{K_v} \subset \Gamma_{K_v}$ for the inertia group.

We write $\widehat{\cO}_K = \prod_v \cO_{K_v}$ for the maximal compact subring of the adele ring $\bbA_K = \prod_v' K_v$. We will write $| \cdot |_v : K_v^\times \to \bbR_{>0}$ for the valuation satisfying $| \varpi_v | = q_v^{-1}$, and $\| \cdot \| = \prod_v | \cdot |_v : \bbA_K^\times \to \bbR_{>0}$ for the adelic norm; it satisfies the product formula $ \| \gamma \| = 1$ for all $\gamma \in K^\times$. If $Y$ is a integral smooth scheme over $K_v$, and $\omega_Y$ is a non-vanishing differential form of top degree on $Y$, then we write $| \omega_Y |_v$ for the corresponding measure on $Y(K_v)$.

If $S$ is a scheme, a reductive group over $S$ is a smooth group scheme $G \to S$ with geometric fibres which are (connected and) reductive. If $G$ is a group scheme over $S$ which acts on another scheme $X \to S$, then for $x \in X(S)$ we write $Z_G(x)$ for the scheme-theoretic stabilizer of $x$. If $Z \subset X$ is a closed subscheme, then we write $Z_G(Z)$ and $N_G(Z)$ for the scheme-theoretic centralizers and normalizers of $Z$. If $G$ is a reductive group over a field then we write $Z_0(G)$ for the identity component of the centre $Z_G$ of $G$. Lie algebras will be denoted using gothic letters (e.g.\ $\Lie G = \frg$). 

If $G$ is a smooth group scheme over $\bbF_q$, and $K = \bbF_q(X)$, then we write $\mu_G$ for the right-invariant Haar measure on $G(\bbA_K)$ which gives measure 1 to the open compact subgroup $G(\widehat{\cO}_K) \subset G(\bbA_K)$. If $G$ is semisimple, then we will write $\tau_G$ for the Tamagawa measure on $G(\bbA_K)$; these two measures are related by the formula (see \cite{Wei95}):
\[ \tau_G = q^{\dim G(1-g_X)} \left[ \prod_v \int_{G(\cO_{K_v})} \,| \omega_G |_v \right] \mu_G, \]
where $\omega_G$ is a non-vanishing invariant differential form of top degree on $G$ (hence defined over $\bbF_q$) and $g_X$ denotes the genus of $X$.

\section{Elliptic curves with two marked points}\label{sec_elliptic_curves_with_marked_points}

Let $k$ be a field of characteristic not dividing 6. We consider tuples $(E, P, Q)$, where $E$ is an elliptic curve over $k$ (with origin point $O \in E(k)$) and $P, Q \in E(k)$ are distinct, non-trivial marked points. 

Such pointed curves have a distinguished class of plane embeddings which are different to the usual Weierstrass embeddings, being defined by the linear system associated to the degree 3 divisor $O + P + Q$. Indeed, this linear system is very ample, so embeds $E$ into the projective plane $\bbP_k^2$ in such a way that the points $O$, $P$, $Q$ are collinear. If $X, Y, Z$ are the co-ordinates on $\bbP^2_k$ then we can assume, after a projective transformation,  that $O$, $P$, $Q$ are given respectively by $[0 : 1 : 0]$, $[1 : 1 : 0 ]$, and $[-1 : 1 : 0]$. The co-ordinate system is then uniquely determined up to substitutions of the form $X \leadsto  a X + b Z$ and $Y \leadsto a  Y + c Z$ with $a \in k^\times$, $b,c \in k$. It is easy to check that there is a unique such substitution with $a = 1$ leading to an equation of the form 
\begin{equation}\label{eqn_generalized_weierstrass_equation_over_a_field} Y ( XY + 2 q_4 Z^2) = X^3 + p_2 X^2 Z + p_4 X Z^2 + p_6 Z^3. 
\end{equation}
We define the associated polynomial $f(x) = x^4 + p_2 x^2 + p_4 x  + p_6 + q_4^2$, and $\Delta(p_2, p_4, q_4, p_6) = \disc f \in \bbZ[p_2, \dots, p_6]$. The following is elementary:
\begin{lemma}\label{lem_equations_of_pointed_curves}
Let $p_2, p_4,  q_4, p_6 \in k$, and let $E$ be the plane curve over $k$ defined by the equation (\ref{eqn_generalized_weierstrass_equation_over_a_field}). Then $E$ is smooth if and only if $\Delta(p_2, p_4, q_4, p_6) \neq 0$. The assignment $(E, P, Q, t) \mapsto (p_2, p_4,q_4, p_6)$ defines a bijection between the following two sets:
\begin{itemize}
\item The set of tuples $(E, P, Q, t)$, where $E$ is an elliptic curve over $k$ and $P, Q \in E(k)$ are distinct non-trivial rational points, and $t$ is a basis for $H^0(E, \cO_E(O) / \cO_E)$. These tuples are considered up to isomorphism (i.e.\ isomorphisms $\varphi : E \to E'$ of elliptic curves which preserve the other data).
\item The set of tuples $(p_2, p_4, q_4, p_6) \in k^4$ such that $\Delta(p_2, p_4, q_4, p_6) \neq 0$.
\end{itemize}
Under this bijection, a tuple $(E, P, Q, \lambda t)$ ($\lambda \in k^\times$) corresponds to $(\lambda p_2, \lambda^2 p_4, \lambda^2 q_4, \lambda^3 p_6)$.
\end{lemma}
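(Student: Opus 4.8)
The plan is to work directly with the plane cubic $E$ defined by~(\ref{eqn_generalized_weierstrass_equation_over_a_field}) and match up the geometry with the combinatorial data $(p_2,p_4,q_4,p_6)$. First I would verify the smoothness criterion. Homogenizing, the curve meets the line at infinity $Z=0$ in the three points $[0:1:0]$, $[1:1:0]$, $[-1:1:0]$ (the roots of $Y^2 X = X^3$, i.e.\ $X(X-Y)(X+Y)=0$), which are automatically distinct and at which one checks the partial derivatives do not all vanish, so singularities can only occur in the affine chart $Z=1$. There, setting $y = Y$ and eliminating, one finds that a point is singular exactly when a certain one-variable polynomial — which after completing the relevant square is precisely $f(x) = x^4 + p_2 x^2 + p_4 x + p_6 + q_4^2$ — has a repeated root; hence $E$ is smooth iff $\disc f = \Delta(p_2,p_4,q_4,p_6)\neq 0$. (Here the hypothesis $\cha k \nmid 6$ is used to legitimately complete the square and to rule out cuspidal degenerations in the relevant normal form.)

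Next I would set up the bijection. Given a tuple $(E,P,Q,t)$ as in the first set, the discussion preceding the lemma already produces, from the linear system $|O+P+Q|$ together with the normalization of coordinates placing $O,P,Q$ at $[0:1:0],[1:1:0],[-1:1:0]$, an equation of the shape~(\ref{eqn_generalized_weierstrass_equation_over_a_field}); the only remaining ambiguity is the residual substitution $X\leadsto aX+bZ$, $Y\leadsto aY+cZ$ with $a\in k^\times$, and the claim is that fixing the basis $t$ of $H^0(E,\cO_E(O)/\cO_E)$ rigidifies $a=1$ (this is the content of "there is a unique such substitution with $a=1$" in the running text), while $b,c$ are then pinned down by demanding the equation take the precise form~(\ref{eqn_generalized_weierstrass_equation_over_a_field}) (no $X^2$ absorbed into $Y^2$, no linear-in-$Y$ correction beyond the $2q_4Z^2$ term, etc.). Conversely, given $(p_2,p_4,q_4,p_6)$ with $\Delta\neq 0$, the plane curve~(\ref{eqn_generalized_weierstrass_equation_over_a_field}) is a smooth genus-one curve; take $O=[0:1:0]$ as origin, $P=[1:1:0]$, $Q=[-1:1:0]$ (these are visibly distinct and nontrivial), and let $t$ be the basis of $H^0(E,\cO_E(O)/\cO_E)$ corresponding to the coordinate function $X/Z$ (which has a simple pole at $O$). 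I would then check the two constructions are mutually inverse: starting from a tuple, extracting the quadruple, and rebuilding gives back an isomorphic tuple because the normal form~(\ref{eqn_generalized_weierstrass_equation_over_a_field}) is unique given all the rigidifying data; starting from a quadruple it is immediate.

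Finally, the scaling assertion: replacing $t$ by $\lambda t$ rescales the coordinate with a simple pole at $O$, i.e.\ $X \leadsto \lambda^{-1} X$ (and correspondingly $Y \leadsto \lambda^{-1}\cdot(\text{something})$, forced by keeping the $Y^2X$ term monic after clearing). Tracking this through~(\ref{eqn_generalized_weierstrass_equation_over_a_field}): the term $Y\cdot XY = XY^2$ scales like $\lambda^{-3}$, so to keep the equation in the same normalized form one rescales throughout by $\lambda^3$, and then matching coefficients shows $p_2\mapsto \lambda p_2$, $p_4\mapsto\lambda^2 p_4$, $q_4\mapsto\lambda^2 q_4$, $p_6\mapsto \lambda^3 p_6$ — consistently with $f(x)\mapsto \lambda^4 f(\lambda^{-1} x)$ up to the constant $q_4^2$, which is the homogeneity one expects of a discriminant computation. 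I expect the only genuinely fiddly part to be bookkeeping the residual coordinate ambiguity and confirming that the basis $t$ exactly cuts it down to $a=1$ (equivalently, that two quadruples give isomorphic tuples only if they are equal); everything else is a direct computation with the explicit cubic. The characteristic hypothesis enters only to make the square-completions and the standard identification of $H^0(E,\cO_E(O)/\cO_E)$ with the space of "$1/X$-type" functions valid.
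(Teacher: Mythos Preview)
Your overall strategy is the same as the paper's: the preceding discussion already sets up the embedding and the normal form, and the lemma just records that the last piece of rigidifying data is the choice of $t$. The paper's proof is in fact a single sentence pinning down this normalization.

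However, you have the normalization wrong, and this is exactly the point the paper singles out. You assert that $X/Z$ has a simple pole at $O=[0:1:0]$ and take $t$ to be its image in $\cO_E(O)/\cO_E$. But in the chart $Y=1$ the equation reads $x + 2q_4 z^2 = x^3 + p_2 x^2 z + p_4 x z^2 + p_6 z^3$, whose linear part at $(0,0)$ is $x$; hence $z=Z/Y$ is a uniformizer at $O$ and $x=X/Y$ vanishes to order at least $2$ there. Consequently $X/Z = x/z$ has a \emph{zero} at $O$, not a pole, and its image in $\cO_E(O)/\cO_E$ is $0$. The function with a simple pole at $O$ is $Y/Z = 1/z$, and the correct normalization (as the paper states) is that the image of $Y/Z$ in $\cO_E(O)/\cO_E$ equals $t$. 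This also means your scaling paragraph needs to be redone: replacing $t$ by $\lambda t$ amounts to rescaling $Y/Z$, not $X/Z$, and tracking that through the normal form is what produces the weights $(1,2,2,3)$.

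A smaller point: your sketch of the smoothness criterion (``completing the square gives $f$'') is vaguer than it needs to be; the link between singular points of the affine cubic and repeated roots of $f(x)=x^4+p_2x^2+p_4x+p_6+q_4^2$ is a direct computation, but not literally a square-completion in the variable $y$. Since the lemma only asserts the equivalence, you can simply compute the Jacobian criterion in the affine chart and check it matches $\disc f=0$.
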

\begin{proof}
The only thing to note is that the bijection is normalized by the requirement that $Y / Z \in H^0(E, \cO_E(O + P + Q))$ has image in $\cO_E(O) / \cO_E$ equal to $t$.
\end{proof}
A similar story works over a more general base:
\begin{proposition}
Let $S$ be a $\bbZ[1/6]$-scheme, and let $p : E \to S$ be a family of elliptic curves equipped with identity section $O \in E(S)$ and sections $P, Q \in E(S)$ such that on every fibre, the associated points are distinct and non-trivial. Let $\cL = (p_\ast [ \cO_E(O) / \cO_E ])^{\otimes -1}$. Then $\cL$ is an invertible $\cO_S$-module, and there are canonically determined sections $p_2 \in H^0(S, \cL)$, $p_4, q_4 \in H^0(S, \cL^{\otimes 2})$, and $p_6 \in H^0(S, \cL^{\otimes 3})$, such that $E$ is isomorphic to the subscheme of $\bbP( \cL \oplus \cL \oplus  \cO_S)$  defined by the equation
\begin{equation}\label{eqn_generalized_weierstrass_equation} Y( XY + 2 q_4 Z^2) = X^3 + p_2 X^2 Z + p_4 X Z^2 + p_6 Z^3, 
\end{equation}
where $(X, Y, Z)$ is the co-ordinate system relative to the decomposition $  \cL \oplus \cL \oplus \cO_S$. Moreover, $\Delta(p_2, \dots p_6) \in H^0(S, \cL^{\otimes 12})$ is an everywhere non-vanishing section.

Conversely, suppose given an invertible $\cO_S$-module $\cL$, together with sections $p_2, \dots, p_6$ as above such that $\Delta(p_2, p_4, q_4, p_6)$ is a non-vanishing section of $\cL^{\otimes 12}$. Then the relative curve defined by the equation (\ref{eqn_generalized_weierstrass_equation}) is an elliptic curve with marked points at infinity that are distinct and non-trivial in each fibre. 
\end{proposition}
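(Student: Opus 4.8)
The plan is to deduce the forward implication from the field-level Lemma~\ref{lem_equations_of_pointed_curves} by descent, and to verify the converse fibre by fibre.

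\emph{Forward direction.} First note that $\cO_E(O)/\cO_E$ is the direct image under the section $O$ of the invertible sheaf $O^\ast\cO_E(O)$ on the copy $O(S)\cong S$ of $S$ inside $E$; since $p\circ O=\mathrm{id}_S$, applying $p_\ast$ gives $\cL^{\otimes-1}=O^\ast\cO_E(O)$, which is invertible, so $\cL$ is invertible. Now work over an affine open $U=\Spec A$ of $S$ on which $\cL$, equivalently $\cL^{\otimes-1}=p_\ast[\cO_E(O)/\cO_E]$, is trivial: a trivialization is the same as a generator $t$ of the free rank-$1$ module $p_\ast[\cO_E(O)/\cO_E]|_U$, and the construction underlying Lemma~\ref{lem_equations_of_pointed_curves} that assigns coefficients to such data---embed $E|_U$ by the relatively very ample linear system $|O+P+Q|$, normalize the coordinates so that $O,P,Q$ lie at $[0:1:0],[1:1:0],[-1:1:0]$, and rigidify using $t$---carries over to $A$, because $p_\ast\cO_E(O+P+Q)$ and the other pushforward sheaves involved are locally free with formation commuting with base change (cohomology and base change, since $O+P+Q$ has fibrewise degree $3>2g-2=0$ on curves of genus $1$). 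This yields a closed embedding $E|_U\hookrightarrow\bbP^2_U$ defined by (\ref{eqn_generalized_weierstrass_equation_over_a_field}) with coefficients $(p_2,p_4,q_4,p_6)\in A^4$; and since $E\to U$ has smooth fibres, Lemma~\ref{lem_equations_of_pointed_curves} forces $\Delta(p_2,p_4,q_4,p_6)$ to be nonzero at every point of $\Spec A$, hence a unit. To globalize, cover $S$ by such opens $U_i$; if the trivializations of $\cL^{\otimes-1}$ over $U_i$ and $U_j$ differ by $g_{ij}\in\cO(U_i\cap U_j)^\times$, then the generators satisfy $t_j=g_{ij}t_i$, so by the scaling rule at the end of Lemma~\ref{lem_equations_of_pointed_curves}, $(p_2^{(j)},p_4^{(j)},q_4^{(j)},p_6^{(j)})=(g_{ij}p_2^{(i)},g_{ij}^2p_4^{(i)},g_{ij}^2q_4^{(i)},g_{ij}^3p_6^{(i)})$, whence also $\Delta^{(j)}=g_{ij}^{12}\Delta^{(i)}$. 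These are precisely the cocycle relations making $p_2\in H^0(S,\cL)$, $p_4,q_4\in H^0(S,\cL^{\otimes2})$, $p_6\in H^0(S,\cL^{\otimes3})$, and $\Delta(p_2,\dots,p_6)$ an everywhere non-vanishing section of $\cL^{\otimes12}$; and since changing the trivialization of $\cL$ corresponds to changing the coordinate chart of $\bbP(\cL\oplus\cL\oplus\cO_S)$, the local embeddings glue to a closed embedding $E\hookrightarrow\bbP(\cL\oplus\cL\oplus\cO_S)$ with equation (\ref{eqn_generalized_weierstrass_equation}).

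\emph{Converse.} The weights of $p_2,p_4,q_4,p_6$ are precisely such that every monomial in (\ref{eqn_generalized_weierstrass_equation}) is a global section of one and the same invertible sheaf on $\bbP(\cL\oplus\cL\oplus\cO_S)$, so the equation cuts out a closed subscheme $\cE$, proper over $S$. Flatness I would check on the two charts $\{Z\neq0\}$ and $\{Y\neq0\}$, which cover $\cE$ because $[1:0:0]\notin\cE$: after trivializing $\cL$ locally on $S$, the equation becomes monic of degree $3$ in $X/Z$ over $\cO_S[Y/Z]$ on the first chart and monic of degree $3$ in $X/Y$ over $\cO_S[Z/Y]$ on the second, exhibiting $\cE$ locally as finite free of rank $3$ over an affine line over $S$, hence $S$-flat. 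For $s\in S$, the fibre $\cE_s=\cE\times_S k(s)$ is the plane cubic (\ref{eqn_generalized_weierstrass_equation_over_a_field}) over $k(s)$ with coefficients the values of the $p_i,q_4$; since $\Delta$ is everywhere non-vanishing and $\cha k(s)\neq2$, Lemma~\ref{lem_equations_of_pointed_curves} shows $\cE_s$ is smooth, and a smooth plane cubic is geometrically connected of genus $1$ and carries the three distinct points $[0:1:0],[1:1:0],[-1:1:0]$, the latter two non-trivial. Hence $\cE\to S$ is proper and flat with smooth fibres---hence smooth---with geometrically connected fibres of genus $1$; $O=[0:1:0]$ is an identity section; and $P=[1:1:0]$, $Q=[-1:1:0]$ are sections distinct and non-trivial in every fibre. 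So $(\cE,O,P,Q)$ has the required shape, and one checks directly that this construction is inverse to the one above (in particular $\cL$ is recovered as $(p_\ast[\cO_\cE(O)/\cO_\cE])^{\otimes-1}$).

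\emph{Where the work is.} There is no serious obstacle: the two points needing care are (i) the cohomology-and-base-change input that lets the field-level construction of Lemma~\ref{lem_equations_of_pointed_curves} run uniformly over an affine base, which is standard because $O+P+Q$ has fibrewise degree $3$ on genus-$1$ curves, and (ii) the bookkeeping matching the scaling rule of Lemma~\ref{lem_equations_of_pointed_curves} against the transition cocycles of $\cL$, $\cL^{\otimes2}$, $\cL^{\otimes3}$, $\cL^{\otimes12}$. The remaining verifications---properness, flatness via the monic-in-a-coordinate form of the equation, and smoothness and genus of the fibres via Lemma~\ref{lem_equations_of_pointed_curves}---are direct.
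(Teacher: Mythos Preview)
Your proof is correct. The paper does not actually prove this proposition: it is stated immediately after Lemma~\ref{lem_equations_of_pointed_curves} with only the preamble ``A similar story works over a more general base,'' leaving the details implicit. Your argument supplies exactly those details---globalizing the field-level construction via cohomology and base change plus the scaling rule, and checking the converse fibrewise---and this is the natural way to do it.
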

We can use this theory to describe integral models of such triples $(E, P, Q)$ over a Dedekind scheme. Let $S$ be a Dedekind scheme on which 6 is a unit, let $K = K(S)$, and let $\cL$ be an invertible $\cO_S$-module. Suppose given sections $p_2 \in H^0(S, \cL)$, $p_4, q_4 \in H^0(S, \cL^{\otimes 2})$, and $p_6 \in H^0(S, \cL^{\otimes 3})$ such that $\Delta(p_2, p_4, q_4, p_6) \in H^0(S, \cL^{12})$ is non-zero. Then the equation (\ref{eqn_generalized_weierstrass_equation}) defines a proper flat morphism $p : \cE \to S$ with smooth generic fibre (and indeed, singular fibres exactly above those points of $S$ where $\Delta$ vanishes).

We call the data of $(\cL, p_2, \dots, p_6)$ minimal if we cannot find an invertible subsheaf $\cM \subset \cL$ such that the sections $p_2, \dots, p_6$ all come from $\cM$. The minimal data is uniquely determined by the triple $(E, P, Q)$ over $K$, in the following sense: if $(\cL, p_2, \dots, p_6)$ and $(\cM, p'_2, \dots, p_6')$ are two sets of minimal data associated to $E$, then we can find an isomorphism $f : \cL \to \cM$ of invertible $\cO_S$-modules such that $f(p_2, \dots, p_6) = (p_2', \dots, p_6')$. Indeed, it follows from Lemma \ref{lem_equations_of_pointed_curves} that we can find an isomorphism $f_\eta : \cL_\eta \to \cM_\eta$ over the generic point $\eta$ of $S$ such that $f(p_2, \dots, p_6) = (p_2', \dots, p_6')$. Choosing an isomorphism $\cL_\eta \cong K$, we see that both $\cL$ and $\cM$ can be characterized as the smallest invertible subsheaves of $K$ containing the sections $p_2, \dots, p_6$ in their respective tensor powers.

We refer to the morphism $p : \cE \to S$ associated to minimal data $(\cL, p_2, \dots, p_6)$ as a minimal integral model of the triple $(E, P, Q)$. By the above discussion, it is also uniquely determined up to isomorphism by $(E, P, Q)$. We can describe this minimal model in elementary terms in case $K = \bbF_q(X)$ is the function field of a smooth, projective, geometrically connected algebraic curve over $\bbF_q$. Let $(E, P, Q)$ be an elliptic curve over $K$ with two distinct non-trivial marked rational points, and choose an arbitrary equation of type (\ref{eqn_generalized_weierstrass_equation}) with $p_2, \dots, p_6 \in K$. Then for each place $v$ of $K$ there is a unique integer $n_v$ satisfying the following conditions:
\begin{enumerate}
\item The tuple $(\varpi_v^{n_v} p_2, \varpi_v^{2 n_v} p_4, \varpi_v^{2 n_v} q_4, \varpi_v^{3 n_v} p_6)$ has co-ordinates in $\cO_{K_v}$.
\item The integer $n_v$ is minimal with respect to this property.
\end{enumerate}
We then define $\cL \subset K$ to be the invertible subsheaf whose sections over a Zariski open $U \subset X$ are given by the formula
\[ \cL(U) = K \cap \left[ \prod_{v \in U} \varpi_v^{-n_v} \cO_{K_v} \right]. \]Then $p_2, \dots, p_6$ are sections of the tensor powers of $\cL$, and the tuple $(\cL, p_2, \dots, p_6)$ is minimal. 

In this paper we will ultimately only be interested in those curves $(E, P, Q)$ for which the associated minimal data $(\cL, p_2, \dots, p_6)$ satisfies the following two conditions:
\begin{enumerate}
\item The line bundle $\cL$ is a square: $\cL \cong \cM^{\otimes 2}$.
\item The discriminant $\Delta(p_2, \dots, p_6) \in H^0(S, \cL^{12}) \cong H^0(S, \cM^{24})$ is square-free.
\end{enumerate}
The reason for this restriction is that these are exactly the curves which are related to orbits of squarefree discriminant in a certain representation, to be considered in the next section. We now give a geometric characterization of curves of square-free discriminant. 
\begin{lemma}\label{lem_characterization_of_square_free_discriminant}
Let $R$ be a DVR in which 6 is a unit, let $K = \Frac R$, and let $S = \Spec R$. Let $(E, P, Q)$ be an elliptic curve over $K$ together with distinct non-trivial marked points $P, Q \in E(K)$. Let $\Delta \in R$ denote the discriminant of a minimal integral model of $(E, P, Q)$ over $S$, therefore determined up to $R^\times$-multiple. Then $\ord_K \Delta \leq 1$ if and only if the following conditions are satisfied:
\begin{enumerate}
\item The minimal regular model of $E$ over $S$ has special fibre of type $I_0$ or $I_1$.
\item The reductions modulo $\ffrm_R$ in the minimal regular model of $E$ of the points $P, Q \in E(K)$ are distinct and non-trivial.
\end{enumerate}
\end{lemma}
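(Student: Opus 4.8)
The plan is to read off the order of vanishing of $\Delta$ from the geometry of the minimal integral model. Since $R$ is a DVR, I may take the minimal data $(\cL,p_2,\dots,p_6)$ for $(E,P,Q)$ with $\cL=\mathcal O_S$, so that $p_2,p_4,q_4,p_6\in R$ and the tuple is \emph{minimal} in the sense that $(\varpi^{-1}p_2,\varpi^{-2}p_4,\varpi^{-2}q_4,\varpi^{-3}p_6)$ does not have coordinates in $R$; let $\cE_0\subset\bbP^2_R$ be the associated model and $\bar{\cE_0}\subset\bbP^2_k$ its reduction, where $k=R/\ffrm_R$ (so $\cha k\neq 2,3$). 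By construction $\bar{\cE_0}$ is the plane curve attached by Lemma~\ref{lem_equations_of_pointed_curves} to the reduced tuple $(\bar p_2,\bar p_4,\bar q_4,\bar p_6)$, and $\bar\Delta=\disc\bar f$. It suffices to prove the two equivalences $\ord_K\Delta=0\iff$ [type $I_0$ and the reductions of $P,Q$ distinct and non-trivial] and $\ord_K\Delta=1\iff$ [type $I_1$ and the reductions of $P,Q$ distinct and non-trivial]; the assertion of the lemma then follows formally by combining them.

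The case $\ord_K\Delta=0$ is essentially formal. If $\ord_K\Delta=0$ then $\bar\Delta\neq0$, so Lemma~\ref{lem_equations_of_pointed_curves} over $k$ shows $\bar{\cE_0}$ is smooth with distinct non-trivial marked points; being a hypersurface in $\bbP^2_R$ that is flat over $R$ with smooth generic and special fibres, $\cE_0$ is smooth and proper over $R$, hence is the minimal regular model (of type $I_0$), and the sections reduce to the distinct non-trivial points $[0:1:0],[1:1:0],[-1:1:0]$. Conversely, if $E$ has good reduction and the reductions of $P,Q$ in its (smooth) minimal regular model $\cE$ are distinct and non-trivial, then $\cE$ is a family of elliptic curves over $S$ with distinct non-trivial marked points in each fibre; by the relative form of Lemma~\ref{lem_equations_of_pointed_curves} it is cut out by an equation~(\ref{eqn_generalized_weierstrass_equation}) with $p_i\in R$ and $\Delta(p_i)\in R^\times$, and such data is automatically minimal because $\Delta$ is homogeneous of weight $12$ under the rescaling of Lemma~\ref{lem_equations_of_pointed_curves}, so dividing the $p_i$ by a power of $\varpi$ would force $\Delta$ out of $R$. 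Hence $\cE_0=\cE$ and $\ord_K\Delta=0$.

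The substantive case is $\ord_K\Delta=1$, and here I would argue by a local analysis of~(\ref{eqn_generalized_weierstrass_equation}). Elementary book-keeping with discriminants of quartics over a DVR shows that $\ord_K\Delta=1$ is equivalent to: $\bar f$ has a unique multiple root $\rho\in k$, which is a double root with $\bar f/(x-\rho)^2$ separable and non-vanishing at $\rho$, and the Henselian quadratic factor $f_1$ of $f$ lifting $(x-\rho)^2$ satisfies $\ord_K\disc f_1=1$. Using an allowed coordinate change $X\leadsto aX+bZ$, $Y\leadsto aY+cZ$ to move the singular point of $\bar{\cE_0}$ to a standard position and then reducing, one checks from~(\ref{eqn_generalized_weierstrass_equation}) that in this case $\bar{\cE_0}$ has exactly one singular point, an ordinary node, disjoint from the reductions of $\mathcal O,\mathcal P,\mathcal Q$; and that $\ord_K\disc f_1=1$ makes~(\ref{eqn_generalized_weierstrass_equation}) acquire a term $\varpi\cdot(\text{unit})$ in a suitable system of local coordinates at the node, so $\cE_0$ is regular there. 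Then $\cE_0$ is a regular model of $E$ with irreducible special fibre, hence equals the minimal regular model, which is of type $I_1$, and its three sections meet the smooth locus in distinct non-trivial points — the forward implication. For the reverse: if the minimal regular model is of type $I_0$ or $I_1$ with the reductions of $P,Q$ distinct and non-trivial, then the $I_0$ case is handled above, so assume type $I_1$; then $\bar\Delta=0$ (else we are in the $I_0$ case), so $\ord_K\Delta\geq1$, and if $\ord_K\Delta\geq2$ the reverse direction of the quartic book-keeping gives either a worse root configuration of $\bar f$ (so $\bar{\cE_0}$ is cuspidal or reducible and the minimal regular model is additive or $I_{\geq2}$) or a unique double root with $\ord_K\disc f_1\geq2$ (so $\cE_0$ is not regular at the node and its resolution is of type $I_n$, $n=\ord_K\Delta\geq2$); either way this contradicts type $I_1$, so $\ord_K\Delta=1$.

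The crux, and the only non-formal part, is the local analysis invoked above: matching the order of vanishing of $\Delta=\disc f$ with the Kodaira type of $E$ through type $I_1$ and with the mutual positions of the reductions of $O,P,Q$ — in effect a miniature instance of Tate's algorithm in the coordinate system~(\ref{eqn_generalized_weierstrass_equation}). The genuine work is in performing the coordinate changes, reducing modulo $\varpi$, and verifying regularity of $\cE_0$ at the node; everything else reduces to Lemma~\ref{lem_equations_of_pointed_curves}, its relative version, and standard facts about discriminants and minimal regular models.
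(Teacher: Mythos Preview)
Your forward direction is fine and in fact more detailed than the paper's, which simply asserts that $\ord_K\Delta=1$ implies the plane cubic model is regular with irreducible special fibre. Your local analysis via the Henselian quadratic factor of $f$ makes this explicit.

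The converse direction, however, is where your route genuinely diverges from the paper's, and where your argument has a soft spot. The paper does \emph{not} argue by contradiction from the minimal $(E,P,Q)$-model. Instead it starts from the minimal regular model $\cE$ itself, uses the exact sequence
\[
0\to R\to H^0(\cE,\cO_\cE(O+P+Q))\to R^3\to R\to 0
\]
(which stays exact after reduction precisely because $O,P,Q$ remain distinct in the special fibre) to produce sections $x,y$ defining an embedding $\cE\hookrightarrow\bbP^2_R$, and then invokes Zariski's main theorem to conclude that $\cE$ \emph{is} the plane cubic $Z$ so constructed. Since $\cE$ is regular, the resulting equation must have $\ord_K\Delta\leq 1$. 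No case analysis on $\bar f$ is needed.

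Your approach, by contrast, assumes $\ord_K\Delta\geq 2$ and tries to read off a contradiction from the minimal $(E,P,Q)$-model $\cE_0$. Case~(a) (single double root of $\bar f$, bad Henselian lift) is fine: the resolution of a non-regular node gives $I_n$ with $n\geq 2$, and $I_n$ configurations contain no $(-1)$-curves, so this is already minimal. But in case~(b) you assert that a worse root configuration of $\bar f$ forces the minimal regular model to be additive or $I_{\geq 2}$. This implication is not clear: a cuspidal or reducible special fibre on $\cE_0$ does \emph{not} by itself determine the Kodaira type of $E$ --- it could instead reflect a failure of condition~2 (the sections $O,P,Q$ colliding in the minimal regular model while being forced to distinct points at infinity in $\cE_0$). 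You are assuming condition~2 in the converse, so a contradiction is still reachable, but the chain of implications must pass through condition~2, not bypass it. Making this precise requires tracking the map between $\cE_0$ and the minimal regular model, which is exactly the work the paper's constructive argument makes unnecessary.

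In short: your programme can be completed, but case~(b) needs either a genuine Tate-style classification of minimal $(E,P,Q)$-equations or an argument that under condition~2 the special fibre of $\cE_0$ can only be nodal --- and the cleanest way to prove the latter is the paper's Riemann--Roch construction.
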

\begin{proof}
First let $\cE$ denote a minimal integral model of $E$ over $S$. If $\Delta \in R^\times$, then $\cE \to S$ is smooth, $E$ has good reduction and the points $P, Q$ indeed remain distinct in the special fibre. If the discriminant vanishes to order 1, then the model $\cE$ is regular, with irreducible special fibre. It follows that $\cE$ is the minimal regular model of $E$, which therefore has reduction of type $I_1$.

Now let us assume that $E$ has reduction of type $I_0$ or $I_1$, with the points $O, P, Q$ remaining distinct in the special fibre of the minimal regular model. Let $\cE$ denote the minimal regular model of  $E$, and let $D \subset \cE$ denote the divisor $O + P + Q$ in $\cE$. Fix an isomorphism $H^1(\cE, \cO_E) \cong R$; there is a canonical isomorphism
\[ H^0(\cE, \cO_E(O)|_O) \cong H^1(\cE, \cO_E) \cong R, \]
and similarly with $O$ replaced by $P$ or $Q$. The exact sequence of sheaves
\[ \xymatrix@1{ 0 \ar[r] & \cO_\cE \ar[r] & \cO_\cE(D) \ar[r] & \cO_\cE(D)|_D \ar[r] & 0} \]
gives rise to a long exact sequence
\[ \xymatrix@1{ 0 \ar[r] & R \ar[r] & H^0(\cE, \cO_\cE(D)) \ar[r] & R^3 \ar[r] & R \ar[r] & 0,} \]
where the map $R^3 \to R$ is summing co-ordinates. This sequence of finite free $R$-modules remains exact after applying $-\otimes_R k$, from which we see that each map in the sequence has saturated image. We can therefore choose $x, y \in H^0(\cE, \cO_\cE(D))$ which map to $(0, 1, -1)$ and $(-2, 1, 1)$, respectively, in $R^3$; then the elements $1, x, y \in H^0(\cE, \cO_\cE(D))$ span this free $R$-module, and define a map $\cE \to \bbP^2_R$. The elements 
\[ 1, x, y, x^2, xy, y^2, x^3, x^2 y, xy^2, y^3 \in H^0(\cE, \cO_\cE(3D)) \]
generate this free rank 9 $R$-module, and therefore must satisfy an $R$-linear relation. After dividing out by as many as possible powers of the uniformizer we see that this relation is unique up to multiplication by elements of $R^\times$, and has degree 3 term $ a y(y^2 - x^2)$ for some $a \in R^\times$; after multiplying through, we can assume $a = 1$. We are free to replace $x, y$ by $x + b$, $y + c$ for $b, c \in R$, and there is a unique such transformation which puts our given  relation in the form 
\[ y (xy + 2 q_4) = x^3 + p_2 x^2 + p_4 x + p_6 \]
for some $p_2, p_4, q_4, p_6 \in R$. Let $Z \subset \bbP^2_R$ denote the closed subscheme defined by this equation. Then $Z$ is normal and $R$-flat, and we therefore get a morphism $\cE \to Z$. By Zariski's main theorem, this is in fact an isomorphism and we see that $Z$ is regular, which can happen only if $\Delta(p_2, p_4, q_4, p_6)$ has order of vanishing at most 1. 
\end{proof}
If $D$ is a divisor on $X$, then we will write $\cX_D$ for the set of isomorphism classes of triples $(E, P, Q)$ of elliptic curves over $K$ with two marked points such that the minimal data $(\cL, p_2, \dots, p_6)$ satisfies $\cL \cong \cO_X(2D)$, and the discriminant $\Delta(p_2, \dots, p_6) \in H^0(X, \cL^{\otimes 12}) \cong H^0(X, \cO_X(2D))$ is square-free. Lemma \ref{lem_characterization_of_square_free_discriminant} shows that this is the same as the set $\cX_{\cO_X(D)}$ defined in \S \ref{sec_introduction}.

We also write $B_D = \cO_X(2D) \oplus \cO_X(4D) \oplus \cO_X(4D) \oplus \cO_X(6D)$, a vector bundle over $X$, and write $H^0(X, B_D)^\text{sf} \subset H^0(X, B_D)$ for the set of sections $(p_2, p_4, q_4, p_6) \in H^0(X, B_D)$ for which the discriminant $\Delta(p_2, \dots, p_6) \in H^0(X, \cO_X(24D))$ is square-free. We can summarize the results of this section as follows:
\begin{corollary}\label{cor_moduli_of_pointed_curves}
The assignment $\iota : (p_2, \dots, p_6) \mapsto (E, P, Q)$ which sends sections of $H^0(X, B_D)^\text{sf}$ to the curve given by the equation (\ref{eqn_generalized_weierstrass_equation}) is surjective, each fibre having finite cardinality equal to $| \bbF_q^\times | \cdot | \Aut(E, P, Q) |^{-1}$.
\end{corollary}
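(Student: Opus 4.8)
The plan is to read off the statement from Lemma~\ref{lem_equations_of_pointed_curves}, the characterization of minimal data as the \emph{smallest} invertible subsheaf of $K$ carrying the sections $p_2,\dots,p_6$ in their tensor powers (recalled above), and the equality $H^0(X,\cO_X)=\bbF_q$, which holds because $X$ is smooth, projective and geometrically connected over $\bbF_q$. First I would check that $\iota$ has image in $\cX_D$: if $(p_2,\dots,p_6)\in H^0(X,B_D)^{\text{sf}}$, then $(\cO_X(2D),p_2,\dots,p_6)$ is already minimal data for the resulting triple, since if the $p_i$ came from some invertible subsheaf $\cO_X(2D-E')\subsetneq\cO_X(2D)$ with $E'>0$ effective, then $\Delta(p_2,\dots,p_6)$, viewed in $H^0(X,\cO_X(24D))$, would be divisible by $12E'$, contradicting square-freeness; and $\cO_X(2D)=\cO_X(D)^{\otimes2}$ is a square, so the triple lies in $\cX_D$. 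Surjectivity is then immediate: given $(E,P,Q)\in\cX_D$, its minimal data $(\cL,p_2,\dots,p_6)$ has $\cL\cong\cO_X(2D)$ by definition of $\cX_D$, and transporting the $p_i$ along a chosen isomorphism $\cL\xrightarrow{\sim}\cO_X(2D)$ (which also transports $\Delta$, hence preserves square-freeness) produces a preimage.

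For the fibre count, fix $(E,P,Q)\in\cX_D$ and let $V=H^0(E,\cO_E(O)/\cO_E)$, a one-dimensional $K$-vector space; a basis $t$ of $V$ determines a generic trivialization of the minimal line bundle, hence an identification of it with an invertible subsheaf of $K$. Let $T_D\subset V$ be the set of bases $t$ whose associated tuple (under the bijection of Lemma~\ref{lem_equations_of_pointed_curves}) lies in $H^0(X,B_D)^{\text{sf}}$; it is $\Aut(E,P,Q)$-stable, since the tuple depends only on the isomorphism class of $(E,P,Q,t)$. I would then set up mutually inverse maps between $\iota^{-1}(E,P,Q)$ and $T_D/\Aut(E,P,Q)$: a tuple in the fibre corresponds via Lemma~\ref{lem_equations_of_pointed_curves} to some $(E',P',Q',t')$ with $(E',P',Q')\cong(E,P,Q)$, and pushing $t'$ forward along such an isomorphism --- which is determined up to $\Aut(E,P,Q)$ --- gives a well-defined point of $T_D/\Aut(E,P,Q)$; conversely an $\Aut(E,P,Q)$-orbit of bases yields an isomorphism class $(E,P,Q,t)$ and hence a tuple. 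Finally $\Aut(E,P,Q)$ acts \emph{freely} on $T_D$: an automorphism fixing $t$ is an automorphism of $(E,P,Q,t)$, hence of the corresponding element of $K^4$, hence trivial. Thus every fibre is finite, with $|\iota^{-1}(E,P,Q)|=|T_D|\cdot|\Aut(E,P,Q)|^{-1}$, and the statement reduces to showing $|T_D|=|\bbF_q^\times|$.

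By the surjectivity just proved, $T_D$ is nonempty; fix $t_0\in T_D$, with tuple $(p_2^0,\dots,p_6^0)$, so that $(\cO_X(2D),p_2^0,\dots,p_6^0)$ is minimal data for $(E,P,Q)$. For $\lambda\in K^\times$, the basis $\lambda t_0$ has tuple $(\lambda p_2^0,\lambda^2 p_4^0,\lambda^2 q_4^0,\lambda^3 p_6^0)$, whose minimal invertible subsheaf of $K$ is $\lambda\cdot\cO_X(2D)$ (rescale the minimality of the data for $t_0$). Since the minimal subsheaf is the smallest one for which a given tuple is a section, the tuple of $\lambda t_0$ lies in $H^0(X,B_D)$ if and only if $\lambda\cdot\cO_X(2D)\subseteq\cO_X(2D)$, equivalently $\ord_v(\lambda)\ge0$ for every place $v$, equivalently $\lambda\in H^0(X,\cO_X)^\times=\bbF_q^\times$; and rescaling by such a unit preserves square-freeness of the discriminant (equivalently, by Lemma~\ref{lem_characterization_of_square_free_discriminant}, this property depends only on $(E,P,Q)$, not on the chosen trivialization). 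Hence $T_D=\bbF_q^\times\cdot t_0$ and $|T_D|=|\bbF_q^\times|$, which together with the previous paragraph completes the proof. I expect the only genuinely non-formal point to be this last one --- identifying the minimal subsheaf of a rescaled tuple and then using properness of $X$ to force $\lambda\in\bbF_q^\times$ --- whereas the identification of the fibre with $T_D/\Aut(E,P,Q)$, though it relies in an essential way on the bijectivity of Lemma~\ref{lem_equations_of_pointed_curves}, is routine bookkeeping.
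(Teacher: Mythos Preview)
Your proof is correct and follows essentially the same approach as the paper's. Both arguments rest on Lemma~\ref{lem_equations_of_pointed_curves} and its scaling behaviour; the paper phrases the fibre computation as a transitive $\bbF_q^\times$-action on the fibre with stabilizer $\Aut(E,P,Q)$, while you phrase it dually as a free $\Aut(E,P,Q)$-action on the set $T_D$ of admissible bases, which you then identify with a single $\bbF_q^\times$-orbit. The paper's proof is terser and simply asserts transitivity, leaving implicit the step you spell out explicitly --- namely, that if two tuples in $H^0(X,B_D)^{\text{sf}}$ give isomorphic $(E,P,Q)$ then the scalar $\lambda\in K^\times$ relating them (provided by Lemma~\ref{lem_equations_of_pointed_curves}) must lie in $\bbF_q^\times$, which you deduce from the minimal-subsheaf characterization and $H^0(X,\cO_X)=\bbF_q$. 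Your additional check that $\iota$ actually lands in $\cX_D$ (via the observation that square-freeness forces minimality of the data $(\cO_X(2D),p_2,\dots,p_6)$) is also a point the paper takes for granted from the preceding discussion.
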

\begin{proof}
The only thing left to check is the cardinality of the fibres. Let $\bbF_q^\times$ act on $H^0(X, B_D)$ by the formula $\lambda \cdot (p_2, p_4, q_4, p_6) = (\lambda p_2, \lambda^2 p_4, \lambda^2 q_4, \lambda^3 p_6)$.  Lemma \ref{lem_equations_of_pointed_curves} shows that $\bbF_q^\times$ acts transitively on the fibres of $\iota$, and that the stabilizer of any point is $\Aut(E, P, Q)$. The result follows.
\end{proof}
\section{Invariant theory}\label{sec_invariant_theory}

In this section, we introduce the semisimple group $G$ and its representation $V$, the orbits of which will eventually be interpreted as elements of the 2-Selmer groups of elliptic curves of the type considered in \S \ref{sec_elliptic_curves_with_marked_points}. For the moment, $\bbF_q$ denotes a finite field of characteristic prime to 6; we will soon impose more severe restrictions on the characteristic.

\subsection{Preliminaries}\label{sec_preliminaries}

Let $J$ denote the $4 \times 4$ matrix with $1$'s on the anti-diagonal and $0$'s elsewhere, and define a block matrix
\begin{equation}\label{eqn_matrix_of_duality} \Psi = \left( \begin{array}{cc} J & 0 \\ 0 & J \end{array} \right) \in M_{8 \times 8}(\bbZ). 
\end{equation}
We write $\mathrm{SO}_8$ for the special orthogonal group over $\bbF_q$ defined by $\Psi$, $H = \mathrm{SO}_8 / \mu_2$ for its adjoint group, and $H^\text{sc} = \mathrm{Spin}_8$ for its simply connected double cover.  We write $\frh = \Lie H$. We write $\theta$ for the inner involution of $H$ given by conjugation by the element
\begin{equation}\label{eqn_matrix_of_involution} s = \diag(1, -1, -1, 1, 1, -1, -1, 1). 
\end{equation}
We define $G = (H^\theta)^\circ$ (i.e. the identity component of the $\theta$-fixed subgroup of $H$), and $V = \frh^{d \theta = -1}$. There is an isomorphism $G \cong (\SO_4 \times \SO_4) / \Delta(\mu_2)$, where $\SO_4$ is a split special orthogonal group and  $\Delta(\mu_2)$ is the diagonally embedded centre.

We write $T'$ for the (split) diagonal maximal torus of $\SO_8$; a general element has the form
\[ \diag(a, b, b^{-1}, a^{-1}, c, d, d^{-1}, c^{-1}). \]
We write $T$ for the image of $T'$ in $H$. We observe that $T$ is also a maximal torus of $G$. The group $H^\theta$ is disconnected. Its component group $H^\theta / G$ can be computed as follows: let $W_H = N_H(T) / T$ denote the Weyl group of of $H$, $W = N_G(T) / T$ the Weyl group of $G$. Then the map $Z_{W_H}(s) \to H^\theta / G$ is surjective, with kernel equal to $Z_{W}(s)$ (see \cite[\S 2.2]{Hum95}). A calculation shows that the component group is therefore isomorphic to $\bbZ / 2 \bbZ \times \bbZ / 2 \bbZ$. Explicit representatives can be given by the elements $\sigma, \tau \in W_H$ satisfying
\[ \sigma(a, b, c, d) = (a, b, c^{-1}, d^{-1}), \tau(a, b, c, d) = (b, a, d, c), \]
which generate a subgroup $W_0 \subset W_H$ which projects isomorphically to $H^\theta / G$.

We introduce sets of simple roots as follows. A set $R_H \subset X^\ast(T)$ of simple roots for $H$ consists of the characters
\[ \alpha_1 = a/b, \alpha_2 = b/c, \alpha_3 = c/d, \alpha_4 = cd. \]
We let $\alpha_0 = ab$; it is the highest root of $H$. A set $R \subset X^\ast(T)$ of simple roots for $G$ consists of the characters
\[ a_1 = ac, a_2 = a/c, a_3 = bd, a_4 = b/d. \]
The group $G$ is isogenous to $\SL_2^4$, and the group $W_0 \subset W$ commutes with the action of $W_G$ on $X^\ast(T)$ and leaves invariant the set $\{ a_1, \dots, a_4 \}$. Its action on this set is faithful, and identifies $W_0$ with the Klein 4-group $\{ e, (12)(34), (13)(24), (14)(23) \}$. The characters of $T$ appearing in the representation $V$ are exactly the combinations
\[ \frac{1}{2}( \pm a_1 \pm a_2 \pm a_3 \pm a_4), \]
and can thus be thought of as the vertices of a hypercube. Each weight space is 1-dimensional and we thus have $\dim_{\bbF_q} V = 16$. We write $\Phi_V$ for the set of weights appearing in $V$. Any vector $v \in V$ admits a decomposition $v = \sum_{a \in \Phi_V} v_a$. There is a decomposition $\Phi_V = \Phi_V^+ \sqcup \Phi_V^-$ coming from the decomposition of the roots of $H$ into positive and negative roots. We write $n_1, \dots, n_4$ for the basis of $X_\ast(T)_\bbQ$ dual to $a_1, \dots, a_4$. We define a partial order on $\Phi_V$ by setting $a \geq b$ if $n_i(a) \geq n_i(b)$ for each $i = 1, \dots 4$. 
We label these weights in $\Phi_V$ as follows:

\begin{multicols}{2}

\begin{tabular}{c|cccc}
\# & $2n_1$ & $2n_2$ & $2n_3$ & $2n_4$\\
\hline
 1 & 1 & 1 & 1 & 1 \\
 2 & -1 & 1 & 1 & 1 \\
 3 & 1 & -1 & 1 & 1 \\
 4 & 1 & 1 & -1 & 1 \\
 5 & 1 & 1 & 1 & -1 \\
 6 & -1 & -1 & 1 & 1 \\
 7 & -1 & 1 & -1 & 1 \\
 8 & -1 & 1 & 1 & -1 \\
 9 & 1 & -1 & -1 & 1 \\
 10 & 1 & -1 & 1 & -1 \\
 11 & 1 & 1 & -1 & -1 \\
 12 & -1 & -1 & -1 & 1 \\
 13 & -1 &-1 & 1 & -1 \\
 14 & -1 & 1 & -1 & -1 \\
 15 & 1 & -1 & -1 & -1 \\
 16 & -1 & -1 & -1 & -1 \\
\end{tabular}

\columnbreak

{\centering
\begin{tikzpicture}[scale=2.4]
\tikzstyle{every node}=[circle, minimum size=.5cm]
\node (a) at (0,0) {12};

\node (b) at (0.92388, -0.382683) {16};
\node (c) at (-0.382683,	0.92388) {6};

\node (d) at (0.541196,	0.541196) {13};
\node (e) at (0.92388,	0.382683) {9};

\node (f) at (1.84776,0) {15};
\node (g) at (0.382683,	0.92388) {7};
\node (h) at (1.30656,	0.541196) {14};

\node (i) at (0.541196,	1.30656) {3};
\node (j) at (1.46508,	0.92388) {10};
\node (k) at (0,	1.84776) {2};

\node (l) at (0.92388,	1.46508) {8};
\node (m) at (1.30656,	1.30656) {4};
\node (n) at (2.23044,	0.92388) {11};
\node (o) at (0.92388,	2.23044) {1};

\node (p) at (1.84776,	1.84776) {5};

\draw[thick] (o) -- (k);
\draw[thick] (o) -- (p);
\draw[thick] (o) -- (i);
\draw[thick] (o) -- (m);

\draw[thick] (k) -- (l);
\draw[thick] (k) -- (c);
\draw[thick] (k) -- (g);

\draw[thick] (p) -- (l);
\draw[thick] (p) -- (j);
\draw[thick] (p) -- (n);

\draw[thick] (l) -- (d);
\draw[thick] (l) -- (h);

\draw[thick] (i) -- (c);
\draw[thick] (i) -- (j);
\draw[thick] (i) -- (e);

\draw[thick] (m) -- (g);
\draw[thick] (m) -- (n);
\draw[thick] (m) -- (e);

\draw[thick] (c) -- (a);
\draw[thick] (c) -- (d);

\draw[thick] (g) -- (a);
\draw[thick] (g) -- (h);

\draw[thick] (j) -- (d);
\draw[thick] (j) -- (f);

\draw[thick] (n) -- (h);
\draw[thick] (n) -- (f);

\draw[thick] (d) -- (b);

\draw[thick] (h) -- (b);

\draw[thick] (e) -- (a);
\draw[thick] (e) -- (f);

\draw[thick] (a) -- (b);
\draw[thick] (f) -- (b);
\end{tikzpicture}
}
\end{multicols}
The figure above shows the Hasse diagram of $\Phi_V$ with respect to this partial order. The weight labelled 1 is $\alpha_0$. If $M \subset \Phi_V$ is a subset, we will write $\lambda(M) \subset \Phi_V - M$ for the set of maximal elements of $\Phi_V - M$, i.e.\ the set
\[ \{ a \in \Phi_V - M \mid \forall b \in \Phi_V - M, a \leq b \Rightarrow a  = b \}. \]
It is useful to note that the action of $W_0$ preserves the partial order on $\Phi_V$, and consequently commutes with application of the function $\lambda$.

In the paper \cite{Tho13}, we have summarised part of the invariant theory of the pair $(G, V)$ over a field of characteristic 0; in this case, the most important results were established by Kostant--Rallis \cite{Kos71}. They have been extended to positive characteristic in many cases by Levy \cite{Lev07}. We now discuss this. 
\begin{proposition}\label{prop_invariant_theory_of_theta_group} Let $k / \bbF_q$ be a field, and let $k^s / k$ be a separable closure.
\begin{enumerate}
\item The natural inclusions $\bbF_q[V]^G \to \bbF_q[V]^{H^\theta} \to \bbF_q[\frh]^H$ are isomorphisms, and all of these rings are isomorphic to polynomial algebras over $\bbF_q$ on four homogeneous generators of degrees $2, 4, 4,$ and $6$, respectively. We write $\Delta \in \bbF_q[V]^G$ for the restriction of the standard discriminant polynomial of the Lie algebra $\frh$. It is non-zero.
\item Let $B = \Spec \bbF_q[V]^G$, and let $\pi : V \to B$ denote the natural map. Then $\pi$ has reduced, $G^\theta$-invariant fibres.
\item Let $v \in V_k$. Then $Z_{G_k}(v)$ and $Z_{H_k}(v)$ are smooth over $k$.
\item Let $\frc \subset V_k$ be a subspace. We call $\frc$ a Cartan subspace if there exists a maximal torus $C \subset H_k$ such that $\theta(t) = t^{-1}$ for all $t \in C$ and $\Lie C = \frc$. All such subspaces are conjugate under the action of $G(k^s)$.
\item Let $\frc \subset V_k$ be a Cartan subspace. Then the map $N_{G_k}(\frc) \to W(H_k, \frc) = N_{H_k}(\frc) / Z_{H_k}(\frc)$ is surjective, and the natural restriction map $k[V]^G \to k[\frc]^{ W(H_k, \frc)}$ is an isomorphism.
\item Let $v \in V_k$. Then the following are equivalent:
\begin{enumerate}
\item $v$ is semisimple as an element of $\frh_k$.
\item $G_k \cdot v \subset V_k$ is closed.
\item $v$ is contained in a Cartan subspace of $V_k$.
\end{enumerate}
Any such element is called a semisimple element of $V_k$.
\item Let $v \in V_k$. Then the following are equivalent:
\begin{enumerate}
\item $\dim Z_{H_k}(v) = \dim T$.
\item $\dim Z_{G_k}(v) = 0$.
\end{enumerate}
Any such element is called a regular element of $V_k$. The condition of being regular is open, and we write $V^\text{reg} \subset V$ for the open subscheme of regular elements.
\item Let $b \in B(k)$, and let $V_b = \pi^{-1}(b) \subset V$. Then $V_b(k^s)$ contains regular semisimple elements if and only if $\Delta(b) \neq 0$. In this case, $G(k^s)$ acts transitively on $V_b(k^s)$ and for any $v \in V_b(k^s)$, $\frz_{\frh_k}(v) = \Lie Z_{H_k}(v)$ is the unique Cartan subspace of $V_k$ containing $v$.
\end{enumerate}
\end{proposition}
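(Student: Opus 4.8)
The plan is to recognise $(G,V)$ as the Vinberg $\theta$-group attached to an involution of the adjoint group $H$ of type $D_4$, and to deduce each of the eight assertions from the positive-characteristic structure theory. Since $s$ of (\ref{eqn_matrix_of_involution}) lies in $\SO_8(\bbF_q)$ and satisfies $s^2=1$, the map $\theta=\Ad(s)$ is an order-two automorphism of $H$; it fixes the maximal torus $T$ pointwise, so $\frh=\frg\oplus V$ is the $\bbZ/2\bbZ$-grading in which $\frg=\frh^{d\theta=1}$ contains $\Lie T$ and $V$ is a sum of root spaces. Because $\cha\bbF_q$ is prime to $6$ it is very good for $D_4$ and prime to $\ord(\theta)=2$; this is the range in which Levy \cite{Lev07} extends the Kostant--Rallis theory \cite{Kos71} — whose characteristic-$0$ output for this specific pair is collected in \cite{Tho13} — to positive characteristic. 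With that in hand, parts (4), (5), (6), (7) are precisely Levy's positive-characteristic analogues of the classical theorems: existence and $G(k^s)$-conjugacy of Cartan subspaces; surjectivity of $N_{G_k}(\frc)\to W(H_k,\frc)$ together with the Chevalley-type isomorphism $k[V]^G\cong k[\frc]^{W(H_k,\frc)}$; the coincidence of the three descriptions of semisimplicity; and the equivalence, and openness, of the two notions of regularity. Part (2) (reducedness and $H^\theta$-invariance of the fibres of $\pi$) then follows from $\bbF_q[V]^G=\bbF_q[V]^{H^\theta}$ (part (1)) together with flatness of $\pi$ with reduced fibres, which Levy deduces from the reducedness of the fibres of the adjoint quotient $\frh\to\Spec\bbF_q[\frh]^H$ in good characteristic. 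What remains to argue is part (3), the passage from $G$- to $H^\theta$-invariants in part (1), the non-vanishing of $\Delta$, and part (8).

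For part (1), I would first invoke Levy's restriction theorem: restriction carries $\bbF_q[\frh]^H$ isomorphically onto $\bbF_q[V]^{H^\theta}$, and — using that the pair $(\frh,\frg)$ is split (as one checks from the count of invariants, or from \cite{Tho13}), so that a Cartan subspace $\frc\subset V$ is a Cartan subalgebra of $\frh$ of full rank $4=\rank H$ and $W(H,\frc)$ is the full Weyl group of $D_4$ — this exhibits $\bbF_q[V]^{H^\theta}$ as a polynomial ring on homogeneous generators of the $D_4$-degrees $2,4,4,6$. To descend from $H^\theta$ to $G$, note that $\bbF_q[\frh]^H|_V$, being $H$-fixed, is in particular fixed by $H^\theta$, so it equals $\bbF_q[V]^{H^\theta}$; on the other hand part (5) shows $N_{G_k}(\frc)$ already realises all of $W(H_k,\frc)$, and the remaining elements of $N_{H^\theta}(\frc)$ act on $\frc$ through $W(H,\frc)$ as well (elements of $Z_H(\frc)$ acting trivially on $\frc$). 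Hence $W_0\cong H^\theta/G$ acts trivially on $\bbF_q[\frc]^{W(H,\frc)}\cong\bbF_q[V]^G$, and $\bbF_q[V]^{H^\theta}=\bbF_q[V]^G$. That $\Delta\neq0$ is now immediate, since its restriction to a Cartan subalgebra is the product of the roots of $\frh$, which is a non-zero polynomial; as $\frc\subset V$, $\Delta$ is non-zero on $V$. For part (3), given $v\in V_k$ I would use the $\theta$-equivariant Jordan decomposition $v=v_s+v_n$ (available since $\cha\neq2$) to write $Z_{H_k}(v)=Z_{Z_{H_k}(v_s)}(v_n)$ with $Z_{H_k}(v_s)$ reductive (the characteristic being good) and $v_n$ nilpotent in it; smoothness of $Z_{H_k}(v)$, and — intersecting with $G$ — of $Z_{G_k}(v)$, then reduces to smoothness of centralisers of nilpotent elements in good characteristic, which is standard.

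For part (8): when $\Delta(b)\neq0$ every element of $V_b(k^s)$ is regular semisimple in $\frh$, because $\Delta$ is exactly the discriminant cutting out the regular semisimple locus; and $V_b(k^s)\neq\emptyset$ since $\frc(k^s)\to B(k^s)$, being the finite surjective morphism $\frc\to\Spec k^s[\frc]^{W(H,\frc)}$ of affine spaces, is surjective and lands in $V$. For transitivity of $G(k^s)$ on $V_b(k^s)$: any $v\in V_b(k^s)$ lies in a Cartan subspace by (6), all Cartan subspaces are $G(k^s)$-conjugate by (4), and two elements of $\frc$ with the same image in $B$ are $W(H,\frc)$-conjugate (the invariants separate $W(H,\frc)$-orbits on $\frc$, since $k^s[\frc]^{W(H,\frc)}$ is the coordinate ring of the quotient), the conjugacy being realised inside $G$ by (5). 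Finally, for such $v$ the centraliser $\frz_{\frh_k}(v)$ is a Cartan subalgebra of $\frh_k$, and it is $\theta$-stable since $\theta v=-v$; its $\frg_k$-part is $\frz_{\frg_k}(v)=\Lie Z_{G_k}(v)=0$ by (3) and (7), so $d\theta=-1$ on it and hence (as $\cha\neq2$) the torus $C=Z_{H_k}(v)^\circ$ satisfies $\theta|_C=\mathrm{inv}$, making $\frz_{\frh_k}(v)=\Lie C\subset V_k$ a Cartan subspace; it is the only one containing $v$, any such being abelian, containing $v$, hence inside $\frz_{\frh_k}(v)$, and of the same dimension.

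The step I expect to be the genuine obstacle is the characteristic-$p$ bookkeeping — verifying that $\cha\bbF_q\nmid6$ really does land us inside the hypotheses of \cite{Lev07} for every one of the eight statements, in particular that the smoothness in (3) is uniform in $v$ and needs no stronger bound and that the reducedness in (2) persists — together with the (milder) comparison, via part (5), that lets one replace $H^\theta$-invariants by $G$-invariants in part (1). Everything else is a matter of assembling the cited results.
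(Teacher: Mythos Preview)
Your proposal is correct and follows the same route as the paper: both defer to Levy \cite{Lev07} for the positive-characteristic extension of the Kostant--Rallis theory of $\theta$-groups. In fact your sketch is considerably more detailed than the paper's own proof, which consists of a single sentence pointing to the introduction of \cite{Lev07}; your explicit unpacking of how each of the eight parts is extracted from that reference, and your flagging of the characteristic-$p$ bookkeeping as the genuine content, is exactly right.
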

\begin{proof}
Rather than give detailed references to \cite{Lev07}, we simply refer the reader to the introduction of that paper, which features a thorough summary of the results therein. 
\end{proof}
The group $\bbG_m$ acts on $V$ by scalar multiplication; there is an induced $\bbG_m$-action on the quotient $B$ which makes the morphism $\pi : V \to B$ equivariant. We write $B^\text{rs} \subset B$ for the open subscheme where $\Delta$ is non-zero; by the proposition, $\pi^{-1}(B^\text{rs}) = V^\text{rs}$ is the open subscheme of regular semisimple elements of $V$.
\subsection{Singular and trivial orbits}
Let $k / \bbF_q$ be a field. We are now going to give simple criteria in terms of vanishing of certain matrix entries for elements $v \in V_k$ either to satisfy $\Delta(v) = 0$, or to be trivial in a sense we will soon define. 
\begin{lemma}\label{lem_reducible_orbits}
Let $k / \bbF_q$ be a field, and let $v = \sum_{a \in \Phi_V} v_a \in V_k$.
\begin{enumerate}
\item Let $S \subset \{1, 2, 3, 4 \}$ be a two-element subset, and suppose that $v_a = 0$ if $n_i(a) > 0$ for each $i \in S$. Then $\Delta(v) = 0$.
\item Suppose that $v_a = 0$ if $n_i(a) < 0$ for at most one $i \in \{1, 2, 3, 4 \}$. Then $\Delta(v) = 0$.
\end{enumerate}
\end{lemma}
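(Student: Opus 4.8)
The plan is to prove both parts uniformly by showing that any $v$ satisfying the stated vanishing condition fails to be regular semisimple as an element of $\frh \cong \mathfrak{so}_8$; by Proposition~\ref{prop_invariant_theory_of_theta_group}(8) — specifically the transitivity of $G(k^s)$ on the regular semisimple fibres of $\pi$ — this forces $\Delta(v) = 0$. I will detect the failure of regular semisimplicity on the standard $8$-dimensional representation $U$ of $\mathfrak{so}_8$: for $x \in \mathfrak{so}_8$ the characteristic polynomial on $U$ is even, $\chi_x(t) = g_x(t^2)$ with $g_x$ monic of degree $4$, and if $x$ is regular semisimple then $g_x$ has four distinct roots. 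So it will be enough to exhibit a repeated root of $g_v$.

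The first step is to recast each combinatorial condition as membership in a parabolic subalgebra. Writing a general element of the diagonal torus $T' \subset \SO_8$ as $\diag(a,b,b^{-1},a^{-1},c,d,d^{-1},c^{-1})$, I would introduce, for a two-element set $S = \{i,j\}$, the cocharacter $\lambda_S := 2(n_i+n_j) \in X_\ast(T')$ (which in these coordinates is one of $2a^\vee$, $a^\vee+b^\vee+c^\vee+d^\vee$, and so on), and also $\mu := n_1+n_2+n_3+n_4 = a^\vee + b^\vee \in X_\ast(T')$. A direct computation with the weights of $V$ shows that $\langle \lambda_S, a\rangle > 0$ precisely when $n_i(a), n_j(a) > 0$, and that $\langle \mu, a\rangle > 0$ precisely when $n_k(a) < 0$ for at most one $k$. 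Hence the hypothesis of (1) (resp.\ of (2)) says exactly that $v$ lies in the non-positive part $\frh_{\leq 0}$ of the $\bbZ$-grading of $\frh$ determined by $\lambda := \lambda_S$ (resp.\ $\lambda := \mu$). Since the matrix $s$ is diagonal, $\theta$ commutes with $\lambda(\bbG_m)$, so this grading refines $\frh = \frg \oplus V$.

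Now $\frh_{\leq 0}$ is a parabolic subalgebra of $\mathfrak{so}_8$: with $U = \bigoplus_n E_n$ the $\lambda$-weight decomposition, $W := \bigoplus_{n>0} E_n$ is totally isotropic, $E_0 \cong W^\perp/W$, and the Levi is $\frh_0 = Z_\frh(\lambda) \cong \mathfrak{gl}(W) \oplus \mathfrak{so}(E_0)$, with $\mathfrak{gl}(W)$ acting on $W$ and, dually, on $W^\ast \cong \bigoplus_{n<0} E_n$. An element of $\frh_{\leq 0}$ is block lower-triangular for the weight filtration of $U$, so $v$ has the same characteristic polynomial on $U$ as its degree-$0$ component $v_0 \in \frh_0$; thus it suffices to produce a repeated root of $g_{v_0}$, where $v_0 \in V_0 := V \cap \frh_0 = \mathfrak{gl}(W)^{d\theta=-1} \oplus \mathfrak{so}(E_0)^{d\theta=-1}$. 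For every cocharacter $\lambda$ that arises, $W$ is a single $\lambda$-weight space, $s$ preserves $W$, $E_0$ and $W^\ast$, and inspection of $s$ will show that exactly one of two alternatives holds: (a) $\dim W \in \{2,4\}$ and the $\pm 1$-eigenspaces of $s|_W$ have equal dimension; or (b) $\dim W = 1$, so $s|_W$ is a scalar and $\mathfrak{gl}(W)^{d\theta=-1} = 0$, while the $\pm 1$-eigenspaces of $s|_{E_0}$ have dimensions $2$ and $4$. In case~(a), the $\mathfrak{gl}(W)$-component $X$ of $v_0$ anticommutes with $s|_W$, hence is off-diagonal with respect to those two eigenspaces; therefore $\chi_X$, and likewise $\chi_{-X^t}$ on $W^\ast$, are even, so the factor of $g_{v_0}$ contributed by $W \oplus W^\ast$ is a perfect square and $g_{v_0}$ has a repeated root. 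In case~(b), $v_0$ kills $W \oplus W^\ast$, while its $\mathfrak{so}(E_0)$-component $Y$ anticommutes with $s|_{E_0}$ and hence maps the $4$-dimensional eigenspace into the $2$-dimensional one, so $\dim \ker(Y|_{E_0}) \geq 2$; then $0$ is an eigenvalue of $v_0$ on $U$ of multiplicity at least $4$, which forces $y^2 \mid g_{v_0}(y)$. In both cases $g_{v_0}$ has a repeated root, as needed. (By the $W_0$-symmetry, which permutes $\Phi_V$ and fixes $\Delta$, the six choices of $S$ reduce to three, so only a handful of explicit checks are required.)

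The conceptual point is the first step: finding the cocharacter that turns each combinatorial condition into ``$v$ lies in a parabolic subalgebra''. The bulk of the work — and the place a slip is most likely — is the case analysis of the third step, where one must correctly pin down the Levi $\frh_0$, the isotropic space $W$, and the restrictions $s|_W$ and $s|_{E_0}$ for each relevant cocharacter and verify the dichotomy (a)/(b); but each such verification is a short, self-contained linear-algebra computation in $\mathfrak{so}_8$, so I do not anticipate any real difficulty.
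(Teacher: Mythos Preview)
Your proposal is correct, and it shares with the paper the key first step: both arguments recognise that the vanishing hypotheses are exactly the conditions $v\in\frp^{d\theta=-1}$ for certain $\theta$-stable parabolic subalgebras $\frp\subset\frh$ containing $\frt$. (Your cocharacters $\lambda_S$ and $\mu$ are, up to positive scalar and $W_0$-conjugacy, the fundamental coweights of $H$, so you are looking at the four maximal standard parabolics or their opposites --- precisely the parabolics the paper writes down.)

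Where you diverge is in the mechanism for deducing $\Delta(v)=0$. The paper proves a clean, uniform criterion: for \emph{any} $\theta$-stable proper parabolic $\frp\supset\frt$, no element of $\frp^{d\theta=-1}$ can be regular semisimple. The reason is that such an element would have its Cartan $C\subset P$, whence the unique Levi $L\supset C$ is $\theta$-stable; but $\theta$ inverts $Z_L^\circ\subset C$ while, via the $\theta$-equivariant projection to the standard Levi (whose centre sits in $T$), it must fix it. You instead pass to the standard $8$-dimensional representation: the characteristic polynomial of $v$ equals that of its Levi component $v_0$, and a short case analysis of $(\mathfrak{gl}(W)\oplus\mathfrak{so}(E_0))^{d\theta=-1}$ shows $g_{v_0}(y)$ always has a repeated factor (a square $h(y)^2$ in case~(a), a factor $y^2$ in case~(b)). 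The paper's route is more conceptual and needs no case split --- one argument handles every $\theta$-stable parabolic, and it would transport unchanged to other Vinberg pairs. Your route is more elementary and self-contained, trading the slightly delicate centre-of-Levi argument for explicit linear algebra in $\mathfrak{so}_8$, at the cost of checking the dichotomy (a)/(b) case by case.
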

\begin{proof}
We will use the following criterion: let $\frp \subset \frh$ be a $\theta$-stable parabolic subalgebra which contains $\frt = \Lie T$, and let $v \in \frp_k^{d \theta = -1}$. Then $\Delta(v) = 0$. Indeed, if $\Delta(v) \neq 0$ then $v$ is regular semisimple, hence its centralizer $\frc = \frz_{\frh_k}(v)$ is a Cartan subalgebra of $\frh_k$ which is contained in $V_k$.  We have $\dim_k \frc \leq \dim_k \frz_{\frp_k}(v) \leq \dim_k \frc$, hence $\frc = \frz_{\frp_k}(v)$ and  $\frc \subset \frp_k^{d \theta = -1}$. Let $C \subset H_k$ denote the unique maximal torus with $\Lie C = \frc$. We have $\dim Z_{P_k}(v) \geq \dim C$, hence $Z_{P_k}(v) = C$ is smooth and $C \subset P_k$. There is a unique Levi subgroup $L \subset P_k$ containing $C$, which is necessarily stable under the action of $\theta$. The centre $Z_L$ is contained in $C$, on which $\theta$ acts by $t \mapsto t^{-1}$. On the other hand, $L$ projects isomorphically and $\theta$-equivariantly to the Levi quotient of $P_k$, and $\theta$ acts on the centre of this quotient trivially (because it acts trivially on $T$). This contradiction implies that we must have $\Delta(v) = 0$. 

If $S \subset \Phi_V$ is a subset, we write $V_S \subset V$ for the subspace given by the equations $v_a = 0$ ($a \in S$). The four maximal proper parabolic subalgebras $\frp \subset \frh$ which contain the Borel subalgebra corresponding to the root basis $R_H$ have $\frp^{d \theta = -1} = V_S$ for the following sets of weights:
\begin{equation}\label{eqn_parabolic_reducible_subspaces} \begin{aligned} S = & \{ \frac{1}{2}(a_1 + a_2 \pm a_3 \pm a_4) \},\, \{ \frac{1}{2}(a_1 \pm a_2 \pm a_3 + a_4) \},\, \{ \frac{1}{2}(a_1 \pm a_2 + a_3 \pm a_4) \},
 \text{ and } \\ & \{ \frac{1}{2}(a_1 + a_2 + a_3 + a_4), \frac{1}{2}(-a_1 + a_2 + a_3 + a_4), \frac{1}{2}(a_1 - a_2 + a_3 + a_4),\\& \hspace{2cm} \frac{1}{2}(a_1 + a_2 - a_3 + a_4), \frac{1}{2}(a_1 + a_2 + a_3 - a_4) \}. \end{aligned}
\end{equation}
The last of these gives the subspace appearing in the second part of the lemma. On the other hand, each of the subspaces appearing in the first part of the lemma is $W_0$-conjugate to one of the first three appearing in (\ref{eqn_parabolic_reducible_subspaces}). The action of $W_0$ leaves $\Delta$ invariant, so this implies the first part of the lemma.
\end{proof}
We now introduce the Kostant section. This is a section $\kappa : B \to V$ of the morphism $\pi : V \to B$, and which has image consisting of regular elements of $V$. We will follow Slodowy \cite{Slo80} in constructing $\kappa$ using a fixed choice of regular $\frs\frl_2$-triple and we must therefore impose the restriction that the characteristic of $\bbF_q$ exceeds $4 h - 2$, where $h$ is the Coxeter number of $H$, namely 6. We therefore now make the following assumption, which holds for the remainder of \S \ref{sec_invariant_theory}:
\begin{itemize}
\item The characteristic of $\bbF_q$ is at least $23$. 
\end{itemize}
This being the case, we define 
\[ E = \left(
\begin{array}{cccccccc}
 0 & 1 & 0 & 0 & 0 & 0 & 0 & 0 \\
 0 & 0 & 0 & 0 & 1 & 0 & 0 & 0 \\
 0 & 0 & 0 & -1 & 0 & 0 & 0 & 0 \\
 0 & 0 & 0 & 0 & 0 & 0 & 0 & 0 \\
 0 & 0 & 0 & 0 & 0 & 1 & 1 & 0 \\
 0 & 0 & 0 & 0 & 0 & 0 & 0 & -1 \\
 0 & 0 & 0 & 0 & 0 & 0 & 0 & -1 \\
 0 & 0 & -1 & 0 & 0 & 0 & 0 & 0 \\
\end{array}
\right). \]
and $\check{\rho} : \bbG_m \to T$ by the formula
\[ \check{\rho}(t) = (t^3, t^2, t^{-2}, t^{-3}, t, 1, 1, t^{-1}). \]
(Thus in fact $\check{\rho}$, which is the sum of the fundamental coweights, lifts to $X_\ast(T')$.) We have the formula $\Ad \check{\rho}(t)(E) = t E$, and we can decompose $E = X_{\alpha_1} + X_{\alpha_2} + X_{\alpha_3} + X_{\alpha_4}$ as a sum of $T$-eigenvectors corresponding to the simple roots $R_H$.
\begin{proposition}\label{prop_existence_of_kostant_section}
\begin{enumerate}
\item There exists a unique element $F \in V$ such that $\Ad \check{\rho}(t)(F) = t^{-1} F$ and $[E, F] = d \check{\rho}(2)$.
\item Let $\kappa = E + \frz_\frh(F)$, an affine linear subspace of $\frh$. Then $\kappa \subset V$ and the restriction $\pi|_{\kappa} : \kappa \to B$ is an isomorphism.
\end{enumerate}
\end{proposition}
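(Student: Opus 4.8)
\emph{Proof proposal.} This is the standard construction of the Kostant section (Kostant, Slodowy), transported to the $\theta$-group $(G,V)$ in the manner of Kostant--Rallis; the only delicate point is that we work in positive characteristic, which is the reason for the hypothesis $\cha\bbF_q\ge 23>4h-2$. The plan is to work throughout with the $\bbZ$-grading $\frh=\bigoplus_{j}\frh_j$, $\frh_j=\{x\in\frh:\Ad\check\rho(t)(x)=t^jx\}$. Since $\check\rho$ is regular we have $\frh_0=\frt$; since $\check\rho(t)\in T\subset G$ the involution $\theta$ preserves each $\frh_j$; and since each simple root $\alpha_i$, together with $-\alpha_i$, lies in $\Phi_V$ (a direct check against the list of weights of $V$), the spaces $\frh_{\pm1}$, spanned by the positive (resp.\ negative) simple root vectors, are contained in $V$; and $E=X_{\alpha_1}+\dots+X_{\alpha_4}\in\frh_1\subset V$ is a regular nilpotent of $\frh$. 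For part (1), the two conditions on $F$ say exactly that $F\in\frh_{-1}$ and $\ad(E)(F)=d\check\rho(2)\in\frh_0$, so it suffices to show $\ad(E)\colon\frh_{-1}\to\frh_0$ is an isomorphism. Both sides have dimension $\rank H=4$, so I only need injectivity: if $F\in\frh_{-1}$ and $[E,F]=0$ then $F\in\frz_\frh(E)$, but the centralizer of the regular nilpotent $E$ is spanned by elements of degrees equal to the exponents $1,3,3,5$ of $D_4$, hence lies in $\bigoplus_{j\ge1}\frh_j$ and meets $\frh_{-1}$ trivially. (This is where the characteristic hypothesis first bites, ensuring the classical theory of regular nilpotents and of $\frs\frl_2$-triples applies; I would cite Slodowy, and Levy for the positive-characteristic statements.) The resulting $F$ is unique, lies in $\frh_{-1}\subseteq V$, and $(E,d\check\rho(2),F)$ is automatically an $\frs\frl_2$-triple, the relations $[d\check\rho(2),E]=2E$, $[d\check\rho(2),F]=-2F$ being forced by the grading; moreover $\dim\frz_\frh(F)=\dim\frz_\frh(E)=4$, so $F$ too is a regular nilpotent.

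For part (2), I would first check $\kappa\subseteq V$: as $E\in V$ it suffices that $\frz_\frh(F)\subseteq V$, and since $\theta(F)=-F$ the space $\frz_\frh(F)$ is $\theta$-stable, so $\frz_\frh(F)=\frz_\frg(F)\oplus\frz_V(F)$ and it is enough that $\frz_\frg(F)=0$. But $F$ is a regular nilpotent, so $\dim Z_H(F)=\dim\frz_\frh(F)=\dim T$ (using smoothness, Proposition~\ref{prop_invariant_theory_of_theta_group}(3)); hence $\dim Z_G(F)=0$ by Proposition~\ref{prop_invariant_theory_of_theta_group}(7), and $Z_G(F)$ being smooth, $\frz_\frg(F)=\Lie Z_G(F)=0$. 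Now $\kappa\cong\frz_\frh(F)\cong\bbA^4$ and $B\cong\bbA^4$ (Proposition~\ref{prop_invariant_theory_of_theta_group}(1)), and I would prove $\pi|_\kappa$ is an isomorphism by a weight argument. Introduce the $\bbG_m$-action $\phi_t(x)=t\,\Ad\check\rho(t)^{-1}(x)$ on $\frh$; it fixes $E$ and preserves $\frz_\frh(F)$ (since $\Ad\check\rho(t)^{-1}$ does, the centralizer depending only on the line through $F$), hence preserves $\kappa$, acting on $\frz_\frh(F)=\bigoplus_i(\frz_\frh(F)\cap\frh_{-m_i})$ with the positive weights $1+m_i\in\{2,4,4,6\}$. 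Since $\Ad\check\rho(t)^{-1}\in G$ acts trivially on $B$ and $\pi$ is equivariant for scaling, $\phi$ induces on $B=\bbA^4$ a linear action with the same weights $\{2,4,4,6\}$, whose unique fixed point is $\pi(E)=0$ (indeed $\pi(E)=\pi(\Ad\check\rho(t)(E))=\pi(tE)$ for all $t$, so $\pi(E)=\lim_{t\to0}\pi(tE)=0$). Thus $\pi|_\kappa$ is a $\bbG_m$-equivariant morphism of affine spaces, matching weights, carrying fixed point to fixed point; a standard graded-triangularity argument shows such a morphism is an isomorphism as soon as its linearization $d(\pi|_\kappa)_E\colon\frz_\frh(F)\to T_0B$ is.

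It remains to see $d(\pi|_\kappa)_E$ is an isomorphism, and this is where I expect the main obstacle to lie: the step that is not formal is that $\pi\colon V\to B$ is smooth along the regular locus, in particular at $E$. Granting this (Kostant--Rallis in characteristic $0$; Levy in general), $\ker(d\pi_E)$ has dimension $16-4=12$; it contains $T_E(G\cdot E)=[\frg,E]$, which is also $12$-dimensional since $\frz_\frg(E)=0$ (same argument as for $F$), so $\ker(d\pi_E)=[\frg,E]\subseteq[\frh,E]$; and the Kostant transversality $\frh=[\frh,E]\oplus\frz_\frh(F)$ for the triple $(E,d\check\rho(2),F)$ — itself a consequence of $\frs\frl_2$-representation theory, valid under our hypothesis on the characteristic — then gives $\frz_\frh(F)\cap\ker(d\pi_E)=0$, so $d(\pi|_\kappa)_E$ is injective, hence (both spaces being $4$-dimensional) an isomorphism. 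In summary, once the positive-characteristic inputs about principal $\frs\frl_2$-triples (regularity of $E$ and $F$ with centralizers graded in the degrees of the exponents, the transversality decomposition, and smoothness of $\pi$ on $V^{\mathrm{reg}}$) are taken for granted from Slodowy and Levy, everything else is linear algebra and the $\bbG_m$-equivariance argument above.
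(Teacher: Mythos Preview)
Your proof is correct and follows the same approach as the paper, which simply cites Springer--Steinberg for part (1) and Slodowy \cite[\S 7.4, Corollary 2]{Slo80} for part (2), noting only the key role of the contracting $\bbG_m$-action $t\cdot v=t\,\Ad\check\rho(t^{-1})(v)$. You have in effect unpacked those citations: your argument that $\frz_\frg(F)=0$ (hence $\kappa\subset V$) via Proposition~\ref{prop_invariant_theory_of_theta_group}(3),(7), and your reduction to injectivity of $d(\pi|_\kappa)_E$ using $\ker(d\pi_E)=[\frg,E]$ together with the Kostant transversality $\frh=[\frh,E]\oplus\frz_\frh(F)$, are exactly the ingredients Slodowy's proof uses in the ambient Lie algebra, adapted to the $\theta$-graded setting.
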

\begin{proof}
The first part is a standard property of $\frs \frl_2$-triples; we could also exhibit $F$ directly. See for example \cite[III, 4.10]{Spr70}. The second part is \cite[\S 7.4, Corollary 2]{Slo80}. An essential role in the proof is played by the fact that for $t \in \bbG_m$, $v \in \kappa$, we have $t\Ad \check{\rho}(t^{-1})(v) \in \kappa$, and this $\bbG_m$-action contracts to the central point $E \in \kappa$. The morphism $\pi|_{\kappa}$ is also clearly equivariant with respect to this $\bbG_m$-action. These properties of the Kostant section will appear again in \S \ref{sec_global_integral_orbits} below.
\end{proof}
\begin{corollary}\label{cor_parameterization_of_orbits}
Let $k / \bbF_q$ be a field, and let $b \in B(k)$, and suppose that $\Delta(b) \neq 0$. Then there is a canonical bijection
\[ G(k) \backslash V(k) \cong \ker(H^1(k, Z_G(\kappa_b)) \to H^1(k, G)). \]
\end{corollary}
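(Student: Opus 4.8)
The plan is to argue exactly as in the classical description of the rational orbits on a homogeneous space possessing a rational point; all of the geometric input is supplied by Proposition~\ref{prop_invariant_theory_of_theta_group} and Proposition~\ref{prop_existence_of_kostant_section}, so the work is essentially cocycle bookkeeping. First I would record the three facts that make the argument run, where I read the left-hand side as $G(k)\backslash V_b(k)$ with $V_b(k) := \pi^{-1}(b)(k)$: (a) since $\kappa : B \to V$ is a section of $\pi$ defined over $\bbF_q$, the element $\kappa_b = \kappa(b)$ lies in $V_b(k)$ and is a distinguished $k$-rational point of the fibre; (b) because $\Delta(b) \neq 0$, Proposition~\ref{prop_invariant_theory_of_theta_group}(8) gives that $G(k^s)$ acts transitively on $V_b(k^s)$, so $V_b$ is a homogeneous space under $G_k$ with stabilizer of $\kappa_b$ equal to $Z_G(\kappa_b)$; and (c) by Proposition~\ref{prop_invariant_theory_of_theta_group}(3) the group scheme $Z_G(\kappa_b)$ is smooth over $k$, so its fppf cohomology agrees with the Galois cohomology $H^1(\Gamma_k, Z_G(\kappa_b)(k^s))$, and likewise for $G$; this is what makes the maps in the statement meaningful over a possibly imperfect field.

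Next I would define the map $G(k)\backslash V_b(k) \to \ker\big(H^1(k, Z_G(\kappa_b)) \to H^1(k, G)\big)$. Given $v \in V_b(k)$, choose, using (b), some $g \in G(k^s)$ with $g \cdot \kappa_b = v$. For $\sigma \in \Gamma_k$ one has $\sigma(g) \cdot \kappa_b = \sigma(g \cdot \kappa_b) = \sigma(v) = v = g \cdot \kappa_b$, since $v$ and $\kappa_b$ are $k$-rational, so $c_\sigma := g^{-1}\sigma(g) \in Z_G(\kappa_b)(k^s)$; one checks in the usual way that $\sigma \mapsto c_\sigma$ is a $1$-cocycle, and it is visibly a coboundary in $G$, so its class lies in the indicated kernel. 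Two choices of $g$ differ by right multiplication by an element of $Z_G(\kappa_b)(k^s)$, which alters $(c_\sigma)$ only within its cohomology class; and replacing $v$ by $g_0 \cdot v$ with $g_0 \in G(k)$ and $g$ by $g_0 g$ leaves $(c_\sigma)$ literally unchanged, since $\sigma(g_0) = g_0$. Hence the assignment descends to a well-defined map on $G(k)$-orbits, and it depends on no auxiliary choices, so it is canonical.

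For the inverse, given a cocycle $(c_\sigma)$ valued in $Z_G(\kappa_b)(k^s)$ whose class dies in $H^1(k,G)$, write $c_\sigma = g^{-1}\sigma(g)$ with $g \in G(k^s)$ (this is precisely the meaning of the vanishing), and set $v := g \cdot \kappa_b$. Then $\sigma(v) = \sigma(g) \cdot \kappa_b = g c_\sigma \cdot \kappa_b = g \cdot \kappa_b = v$ for all $\sigma$, using $c_\sigma \in Z_G(\kappa_b)$ and $\sigma(\kappa_b)=\kappa_b$, so $v \in V_b(k)$; note $\pi(v) = \pi(\kappa_b) = b$ by $G$-invariance of $\pi$. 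A short computation shows the $G(k)$-orbit of $v$ depends only on the cohomology class of $(c_\sigma)$, and that this construction is a two-sided inverse to the map of the previous paragraph. This proves the corollary. I do not expect any genuine obstacle: the only points requiring attention are that the stabilizer is smooth, so the $H^1$'s are Galois cohomology, that the ambient action is transitive on $k^s$-points, both quoted above, and the routine verification that the resulting bijection is independent of the auxiliary choices.
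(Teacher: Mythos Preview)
Your proof is correct and follows the same approach as the paper, which simply states that the result follows from the transitivity of $G(k^s)$ on $V_b(k^s)$ together with the existence of the marked base point $\kappa_b \in V_b(k)$; you have merely written out the standard cocycle argument that this sentence encodes. Your observation that one should read the left-hand side as $G(k)\backslash V_b(k)$ is also correct.
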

\begin{proof}
This follows because $V_b(k^s)$ is a single $G(k^s)$-orbit, and because of the existence of the marked base point $\kappa_b \in V_b(k)$.
\end{proof}
In the situation of the corollary, we refer to the $G(k)$-orbits of the elements $w \cdot \kappa_b$ ($w \in W_0$) as the trivial orbits. We call elements of $V_k = V(k)$ which lie in a trivial orbit trivial elements. Note that this notion depends on $k$ (and indeed, all regular semisimple elements in $V(k^s)$ are trivial over $k^s$).
\begin{lemma}\label{lem_trivial_orbits}
Let $k / \bbF_q$ be a field, and let $v = \sum_{a \in \Phi_V} v_a \in V_k$. Suppose that $v_a = 0$ for all $a \in S$ and $v_a \neq 0$ for all $a \in \lambda(S)$, where $S$ is one of the following sets:
\begin{equation}\label{eqn_weierstrass_vanishing_weights}\begin{aligned}  \{ a_1 + a_2 + a_3 + a_4,  a_1 - a_2 + a_3 + a_4,a_1 + a_2 - a_3 + a_4, a_1 + a_2 + a_3 - a_4 \}, \\ 
 \{ a_1 + a_2 + a_3 + a_4, - a_1 + a_2 + a_3 + a_4, a_1 - a_2 + a_3 + a_4, a_1 + a_2 + a_3 - a_4 \}, \\
  \{a_1 + a_2 + a_3 + a_4, - a_1 + a_2 + a_3 + a_4, a_1 + a_2 - a_3 + a_4, a_1 + a_2 + a_3 - a_4 \}, \\
   \{ a_1 + a_2 + a_3 + a_4, - a_1 + a_2 + a_3 + a_4,a_1 - a_2 + a_3 + a_4, a_1 + a_2 - a_3 + a_4 \}. 
   \end{aligned}
\end{equation}
Then if $\Delta(v) \neq 0$ then $v$ belongs to a trivial orbit of $G(k)$. 
\end{lemma}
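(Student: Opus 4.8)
The plan is to show that every $v$ as in the statement lies in the trivial orbit $G(k)\cdot\kappa_{\pi(v)}$, the one indexed by $w=e\in W_0$. First I would use $W_0$-symmetry to reduce to a single case. Each of the four sets $S$ in (\ref{eqn_weierstrass_vanishing_weights}) consists of the top weight $\alpha_0$ (labelled $1$ in the Hasse diagram) together with three of its four lower covers, and $W_0$ permutes these four covers as the regular Klein four-group. Since $W_0$ normalizes $G$, preserves $\Delta$, and commutes with $\lambda$, it carries trivial orbits to trivial orbits and transports the hypothesis for one set $S$ to that for any other, so it suffices to treat $S=\{1,3,4,5\}$. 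For this set $\lambda(S)=\{2,9,10,11\}$, and a direct check shows these are exactly the weights occurring in $E=X_{\alpha_1}+X_{\alpha_2}+X_{\alpha_3}+X_{\alpha_4}$ (in the labelling of the table, $\alpha_1,\dots,\alpha_4$ are the weights $11,2,9,10$).

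Next I would record the grading of $V$ by $\check\rho$. From the displayed formula for $\check\rho$ one computes $\langle a_1,\check\rho\rangle=4$ and $\langle a_i,\check\rho\rangle=2$ for $i=2,3,4$, so the weight $\tfrac12(\epsilon_1a_1+\dots+\epsilon_4a_4)$ sits in $\check\rho$-degree $2\epsilon_1+\epsilon_2+\epsilon_3+\epsilon_4$, which takes values in $\{\pm1,\pm3,\pm5\}$ only; in particular $V$ has no weights in degrees $0,\pm2,\pm4$. One finds moreover that $S=\{1,3,4,5\}$ is precisely the set of weights of $\check\rho$-degree $\ge3$, so the subspace cut out by the vanishing conditions of the hypothesis is $\bigoplus_{j\le1}\frh_j^{d\theta=-1}$, and its degree-$1$ piece $\frh_1^{d\theta=-1}$ is spanned by the four vectors $X_{\alpha_i}$, with $E$ their sum. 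Since $T\subset G$ is a maximal torus, $\{\alpha_1,\dots,\alpha_4\}$ is a $\bbZ$-basis of $X^\ast(T)$ (the $D_4$ root lattice), and each $v_{\alpha_i}\in k^\times$ by hypothesis, there is a unique $t\in T(k)$ such that $t\cdot v$ has all its degree-$1$ coordinates equal to $1$; replacing $v$ by $t\cdot v$ I may therefore assume $v=E+v'$ with $v'$ lying in the sum of the weight spaces of negative $\check\rho$-degree.

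The heart of the argument is then a Slodowy-slice computation conjugating $v=E+v'$ into $\kappa=E+\frz_\frh(F)$. The input is the $\check\rho$-graded decomposition $V=[E,\frg]\oplus\frz_\frh(F)$ — the $d\theta=-1$ part of the $\frs\frl_2$-decomposition $\frh=[E,\frh]\oplus\frz_\frh(F)$, using $\frz_\frh(F)\subset V$ from Proposition \ref{prop_existence_of_kostant_section} and the injectivity of $\ad(E)$ on $\frg$ (regularity of $E$, via Proposition \ref{prop_invariant_theory_of_theta_group}) — which in each degree reads $\frh_{j+1}^{d\theta=-1}=[E,\frg_j]\oplus\frz_\frh(F)_{j+1}$, where $\frg_j$ denotes the $\check\rho$-degree-$j$ subspace. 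Starting from $v=E+v'$ I would conjugate successively by $\exp(\xi_{-2}),\exp(\xi_{-4}),\dots$ with $\xi_{-2i}\in\frg_{-2i}(k)$ chosen so that at the $i$-th stage the component of the current vector in $\check\rho$-degree $-(2i-1)$ is pushed into $\frz_\frh(F)$; each such conjugation fixes the degree-$1$ part and does not disturb the components in degrees $\ge-(2i-1)$ already corrected, and since $V$ has no weights below degree $-5$ two honest steps suffice (the degree-$(-5)$ part automatically lies in $\frz_\frh(F)_{-5}=V_{-5}$). The resulting element lies in $\kappa$, hence equals $\kappa_{\pi(v)}$ as $\pi$ is $G$-invariant, so $v\in G(k)\cdot\kappa_{\pi(v)}$, which is a trivial orbit precisely when $\Delta(v)=\Delta(\pi(v))\neq0$. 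Undoing the reduction of the first paragraph gives the statement for all four sets $S$.

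The step I expect to be the main obstacle is the bookkeeping inside this last argument: checking that the graded pieces line up exactly as claimed, so that the elements $\xi_{-2i}$ exist and are uniquely determined, that they are defined over $k$, and that the exponentials are legitimate. All of this rests on the standing assumption $\cha\bbF_q\ge 23$, which controls the relevant nilpotency degrees and denominators, but none of it is deep.
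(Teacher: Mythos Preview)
Your proposal is correct and follows essentially the same route as the paper: reduce by $W_0$-symmetry to $S=\{1,3,4,5\}$, identify $\lambda(S)$ with the simple roots $\{\alpha_1,\dots,\alpha_4\}$ of $H$, use adjointness of $H$ to rescale by $T(k)$ so that $v=E+(\text{lower})$, and then conjugate into $\kappa$. The only difference is in the last step: the paper invokes the isomorphism $U^-\times\kappa\to E+\Lie U^-$ as a black box and uses its uniqueness to deduce that the conjugating element satisfies $\theta(u)=u$, whereas you carry out the graded iterative construction directly inside $\frg$ (which amounts to reproving that isomorphism in the $\theta$-equivariant setting); both are standard Slodowy-slice arguments and yield the same conclusion.
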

\begin{proof}
These sets $S$ form a single $W_0$-orbit, so it suffices to treat one of them, say
\[ S = \{ a_1 + a_2 + a_3 + a_4, a_1 - a_2 + a_3 + a_4, a_1 + a_2 - a_3 + a_4, a_1 + a_2 + a_3 - a_4 \}. \]
In this case, we can compute
\[ \lambda(S) = \{ -a_1 + a_2 + a_3 + a_4, a_1 + a_2 - a_3 - a_4, a_1 -a_2 - a_3 + a_4, a_1 - a_2 + a_3 - a_4 \} = \{ \alpha_1, \alpha_2, \alpha_3, \alpha_4 \}. \]
Thus if $v \in V(k)$ is as in the statement of the lemma, we can write
\[ v = \sum_{i=1}^4 \lambda_i X_{\alpha_i} + \sum_{a \in \Phi_V^-} v_a, \]
where each $\lambda_i \in k^\times$. Since the group $H$ is adjoint, we can find $t \in T(k)$ such that $\alpha_i(t) = \lambda_i$ for each $i = 1, \dots, 4$. Replacing $v$ by $t^{-1} \cdot v$, we can assume that $\lambda_i = 1$ for each $i$.

We claim that this implies that $v$ is $(U^-)^\theta(k)$-conjugate to $\kappa(k)$, where $U^- \subset H$ is the unipotent radical of the Borel subgroup $B^- \subset G$ corresponding to the set $- R_H \subset \Phi(H, T)$ of simple roots. One can show that the natural product map $U^- \times \kappa \to E + \Lie U^- \subset \frh$ is an isomorphism. (The analogous fact in characteristic 0 is employed for a very similar purpose in the proof of \cite[Lemma 2.6]{Tho14}; one can easily check that it is true here as well, under our restrictions on the characteristic.) Since $v$ lies in $E + \Lie U^-_k$, we find that there is a unique pair $(u, b) \in U^-(k) \times \kappa(k)$ such that $u \cdot b = v$, and then $u$ necessarily satisfies $\theta(u) = u$, hence $u \in G(k)$, as required.
\end{proof}
\begin{corollary}\label{cor_identification_of_reducible_or_weierstrass_orbits}
Let $k / \bbF_q$ be a field, and let $v  = \sum_{a \in \Phi_V} v_a \in V_k$. Suppose that $v_a = 0$ for all $a \in S$, where $S$ is one of the following subsets (labelling as in the figure preceding Proposition \ref{prop_invariant_theory_of_theta_group}):
\begin{equation}\label{eqn_parabolic_reducibility_sets} \{ 1, 2, 3, 4, 5\}, \{1, 4, 5, 11\}, \{1, 3, 4, 9\}, \{1, 3, 5, 10\}, \{1, 3, 
4, 5\}, \{1, 2, 3, 5\}, \{1, 2, 4, 5\}, 
\end{equation}
\begin{equation}\label{eqn_weierstrass_reducibility_sets} \{1, 2, 3, 4\}, \{1, 2, 3, 6\}, \{1, 2, 
4, 7\}, \{1, 2, 5, 8\}. 
\end{equation}
 Then either $\Delta(v) = 0$ or $v$ belongs to a trivial orbit of $G(k)$.
\end{corollary}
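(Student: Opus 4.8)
\emph{Proof plan.} The plan is to observe that the eleven subsets listed in \eqref{eqn_parabolic_reducibility_sets} and \eqref{eqn_weierstrass_reducibility_sets} fall into two types, handled respectively by Lemma \ref{lem_reducible_orbits} and Lemma \ref{lem_trivial_orbits}.

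First I would translate the labels back into weights via the table preceding Proposition \ref{prop_invariant_theory_of_theta_group}: the weight labelled $k$ is $\frac{1}{2}(\epsilon_1 a_1 + \epsilon_2 a_2 + \epsilon_3 a_3 + \epsilon_4 a_4)$, where $(\epsilon_1, \epsilon_2, \epsilon_3, \epsilon_4) \in \{\pm 1\}^4$ is the corresponding row. A direct inspection then shows: $\{1,2,3,4,5\}$ is exactly the set of $a \in \Phi_V$ with $n_i(a) < 0$ for at most one index $i$; the six sets $\{1,4,5,11\}$, $\{1,3,5,10\}$, $\{1,3,4,9\}$, $\{1,2,5,8\}$, $\{1,2,4,7\}$, $\{1,2,3,6\}$ are exactly the sets $\{a \in \Phi_V : n_i(a) > 0, \, n_j(a) > 0\}$ as $\{i,j\}$ runs over the six two-element subsets of $\{1,2,3,4\}$; and the four sets $\{1,3,4,5\}$, $\{1,2,3,5\}$, $\{1,2,4,5\}$, $\{1,2,3,4\}$ are exactly the four sets appearing in \eqref{eqn_weierstrass_vanishing_weights}. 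For $v \in V_k$ with $v_a = 0$ for all $a \in S$ and $S$ one of the first seven sets, Lemma \ref{lem_reducible_orbits}(2) (when $S = \{1,2,3,4,5\}$) or Lemma \ref{lem_reducible_orbits}(1) (for the six two-element-subset sets) gives $\Delta(v) = 0$ at once, so these cases are complete.

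It remains to treat the four sets $S$ coming from \eqref{eqn_weierstrass_vanishing_weights}. These form a single $W_0$-orbit, and $V_S$, the polynomial $\Delta$, the partial order on $\Phi_V$ (hence the operator $\lambda$), and the notion of triviality of a $G(k)$-orbit are all $W_0$-equivariant --- exactly as exploited in the proofs of Lemmas \ref{lem_reducible_orbits} and \ref{lem_trivial_orbits} --- so it suffices to treat $S = \{1,3,4,5\}$. By the computation in the proof of Lemma \ref{lem_trivial_orbits}, $\lambda(S) = \{\alpha_1, \alpha_2, \alpha_3, \alpha_4\}$, which in our labelling is the set $\{2, 9, 10, 11\}$. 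Now let $v \in V_k$ with $v_a = 0$ for all $a \in S$. If $v_a \neq 0$ for every $a \in \lambda(S)$, then Lemma \ref{lem_trivial_orbits} applies directly and shows that $\Delta(v) = 0$ or $v$ belongs to a trivial orbit of $G(k)$. Otherwise there is some $a \in \{2,9,10,11\}$ with $v_a = 0$, and then $v$ lies in $V_{S'}$ for the enlarged set $S' = S \cup \{a\}$; but $S \cup \{2\} = \{1,2,3,4,5\}$, $S \cup \{9\} \supseteq \{1,3,4,9\}$, $S \cup \{10\} \supseteq \{1,3,5,10\}$, and $S \cup \{11\} \supseteq \{1,4,5,11\}$, so in every case $v$ lies in $V_{S''}$ for one of the seven sets already treated, whence $\Delta(v) = 0$. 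This proves the corollary.

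The only genuine work is the bookkeeping in the second paragraph --- reading the weights off the table, recognising the six two-element-subset subspaces and the four Weierstrass subspaces, and verifying the four containments $S \cup \{a\} \supseteq (\text{a set of the first type})$ --- together with a little care over the $W_0$-equivariance of the notion of triviality. I do not expect any substantial mathematical obstacle, since Lemmas \ref{lem_reducible_orbits} and \ref{lem_trivial_orbits} already carry all of the real content.
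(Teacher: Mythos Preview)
Your proof is correct and follows essentially the same two-step strategy as the paper: apply Lemma~\ref{lem_reducible_orbits} directly to the ``parabolic'' sets, and for each ``Weierstrass'' set $S$ either apply Lemma~\ref{lem_trivial_orbits} (when all $v_a$, $a\in\lambda(S)$, are non-zero) or observe that $S\cup\{b\}$ contains one of the parabolic sets (when some $v_b=0$).

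One small point worth noting: your partition of the eleven sets into seven ``reducible'' and four ``Weierstrass'' sets does not match the paper's split between \eqref{eqn_parabolic_reducibility_sets} and \eqref{eqn_weierstrass_reducibility_sets} --- three sets (namely $\{1,3,4,5\}$, $\{1,2,3,5\}$, $\{1,2,4,5\}$ versus $\{1,2,3,6\}$, $\{1,2,4,7\}$, $\{1,2,5,8\}$) are swapped between the two groups. Your assignment is the one that actually lines up with Lemmas~\ref{lem_reducible_orbits} and~\ref{lem_trivial_orbits}, so your bookkeeping is in fact cleaner here; the underlying argument is unchanged either way.
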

\begin{proof}
This follows from combining Lemma \ref{lem_reducible_orbits} and Lemma \ref{lem_trivial_orbits}, as we now show. Let $v \in V(k)$. The sets $S$ appearing in (\ref{eqn_parabolic_reducibility_sets}) are exactly those appearing in the statement of Lemma \ref{lem_reducible_orbits}, so the result follows immediately in this case (and indeed we have $\Delta(v) = 0$). The sets $S$ appearing in (\ref{eqn_weierstrass_reducibility_sets}) are exactly those appearing in the statement of Lemma \ref{lem_trivial_orbits}. If $S$ is one of these and $v_a = 0$ for all $a \in S$, then there are two possibilities: either $v_a \neq 0$ for all $a \in \lambda(S)$, or there exists $b \in \lambda(S)$ such that $v_a = 0$ for all $a \in S' = S \cup \{ b \}$. In the first case, Lemma \ref{lem_trivial_orbits} shows that $\Delta(v) = 0$ or $v$ belongs to a trivial orbit. In the second case, we see by inspection that $S'$ is one of the sets appearing in (\ref{eqn_parabolic_reducibility_sets}), hence $\Delta(v) = 0$.
\end{proof}

\section{Interlude on $G$-bundles, semi-stability, and integration}\label{sec_semistability_and_integration}

In this section, we review the parameterization of $G$-torsors on curves by adeles and its relation to integration. We also recall the theory of Harder--Narasimhan filtrations and canonical reductions for $G$-torsors, which will be our substitute for reduction theory when it comes to counting points later on. Let $\bbF_q$ be a finite field.

Let $M$ be a smooth affine group scheme over $\bbF_q$. By definition, an $M$-torsor over a scheme $S / \bbF_q$ is a scheme $F \to S$, equipped with a right action of $M_S$, and locally on $S$ (in the \'etale topology) isomorphic to the trivial torsor $M_S$. A morphism $F \to F'$ of $M$-torsors over $S$ is a morphism $F \to F'$ respecting the $M$-action. A torsor $F \to S$ is trivial (i.e.\ isomorphic to the trivial torsor $M_S$) if and only if it admits a section. The set of isomorphism classes of torsors over $S$ is in bijection with $H^1(S, M)$ (non-abelian \'etale cohomology). 

If $M' \subset M$ is a closed subgroup, still smooth over $\bbF_q$, then a reduction of $F \to S$ to $M'$ is a pair $(F', \varphi)$, where $F' \to S$ is an $M'$-torsor and $\varphi :  F' \times_{M'} M \to F$ is an isomorphism. Giving a reduction of $F$ to $M'$ is then equivalent to giving a section of the sheaf quotient $ F / M'$.

Let $X$ be a smooth, projective, geometrically connected curve over $\bbF_q$, and let $K = \bbF_q(X)$. Suppose that $M$ is connected. We say that an $M$-torsor $F \to X$ is \emph{rationally trivial} if $F_K = F \times_X \Spec K$ is a trivial $M$-torsor. This will always be the case if $M$ satisfies the Hasse principle over $K$. Indeed, each pointed set $H^1(\cO_{K_v}, M)$ is trivial (by Lang's theorem and Hensel's lemma). It is useful to note that if $M$ is split reductive, and $P \subset M$ is a parabolic subgroup, then for any rationally trivial $M$-torsor $F \to X$ with a reduction $F_P \to X$ to $P$, $F_P$ is also rationally trivial. Indeed, the morphism $M \to  M /P$ admits Zariski local sections, and $ F_P / P$ defines a $K$-point of $F / P$.

For any connected smooth affine group $M$, the rationally trivial torsors over $X$ can be parameterized using adeles. Indeed, if $\cY_M$ denotes the set of isomorphism classes of such torsors, then there is a canonical bijection
\begin{equation}\label{eqn_adelic_parameterization_of_torsors} \cY_M \cong M(K) \backslash M(\bbA_K) / M(\widehat{\cO}_K). 
\end{equation}
This is a consequence of fpqc descent; see \cite[Appendix]{Gil02}. We can describe the bijection explicitly as follows: given such a torsor $F \to X$, choose sections $x_0 \in F(K)$, $x_v \in F(\cO_{K_v})$ for each place $v$. Then for each $v$ there is a unique element $m_v \in M(K_v)$ such that $x_0 m_v = x_v$, and we assign to $F$ the element $m_F = (m_v)_v \in M(\bbA_K)$. The class $[(m_v)_v] \in M(K) \backslash M(\bbA_K) / M(\widehat{\cO}_K)$ is then clearly well-defined. If $m \in M(\bbA_K)$, we will write $F_m$ for the corresponding $M$-torsor over $X$. We can describe the group of automorphisms of $F_m \to X$ in these terms: we have an isomorphism $\Aut(F_m) \cong M(K) \cap m M(\widehat{\cO}_K) m^{-1}$. It follows that the correspondence (\ref{eqn_adelic_parameterization_of_torsors}) can instead be thought of as an equivalence of groupoids. 

We will henceforth identify $\cY_M$ with this adelic double quotient. We endow $\cY_M$ with its counting measure $\nu_M$, each point $F \in \cY_M$ being weighted by $| \Aut(F) |^{-1}$. If $\mu_M$ is the (right-invariant) Haar measure on $M(\bbA_K)$ which gives $M(\widehat{\cO}_K)$ volume 1, and with modulus $\Delta_l : M(\bbA_K) \to \bbR_{>0}$ defined by the formula
\[ \int_{m' \in M(\bbA_K)} f(m^{-1} m') \, d \mu_M = \Delta_l(m) \int_{m' \in M(\bbA_K)} f(m') \, d \mu_M, \]
 then we have the formula for any compactly supported function $f : \cY_M \to \bbR$:
\begin{equation}\label{eqn_counting_measure_and_adele_measure} \int_{F \in \cY_M} f(F) \, d \nu_M = \int_{m \in M(K) \backslash M(\bbA_K)} f(F_m) \Delta_l(m)^{-1} \, d \mu_M. \end{equation}
An important special case arises when $M$ is a split reductive group and $P \subset M$ is a parabolic subgroup with Levi decomposition $P = L_P N_P$. In this case we define a character $\delta_P \in X^\ast(P)$ by $\delta_P(p) = \det \Ad(p)|_{\Lie N_P}$. A right-invariant Haar measure is given by the formula
\begin{equation} \int_{p \in P(\bbA_K)} f(p) \, d \mu_P = \int_{l \in L_P(\bbA_K)} \int_{n \in N_P(\bbA_K)} f(nl) \, d\mu_{N_P} \, d\mu_{L_P}, 
\end{equation}
and the modulus character of $P$ is $\Delta_l(p) = \| \delta_P(p) \|$, where $\| \cdot \|$ is the adele norm. In this case (\ref{eqn_counting_measure_and_adele_measure}) becomes
\begin{equation}
\label{eqn_counting_measure_and_adele_measure_for_parabolics} \int_{F \in \cY_P} f(F) \, d \nu_P = \int_{p \in P(K) \backslash P(\bbA_K)} f(F_p) \| \delta_P(p) \|^{-1} \, d \mu_P.
\end{equation}
Now suppose that $G$ is a reductive group over $\bbF_q$ with split maximal torus and Borel subgroup $T \subset B \subset G$. Let $P \subset G$ be a standard parabolic subgroup, i.e.\ one containing $B$, and let $P = L_P N_P$ be its standard Levi decomposition; thus $L_P$ is the unique Levi subgroup of $P$ containing $T$. If $F_P \to X$ is a $P$-torsor, we can associate to it an element $\sigma_{F_P} \in X_\ast(Z_0(L_P))_\bbQ \subset X_\ast(T)$, uniquely characterized by the requirement that for any $\chi \in X^\ast(P)$, the line bundle $\cL_\chi = F_P \times_{P, \chi} \bbA^1_{\bbF_q}$ has degree $\deg \cL_\chi = \langle  \sigma_{F_P}, \chi\rangle$. 

We call $\sigma_{F_P}$ the slope of $F_P$. If $\sigma, \tau \in X_\ast(T)_\bbQ$, then we write $\sigma \leq \tau$ if $\langle \tau - \sigma, \alpha \rangle \geq 0$ for all $B$-positive roots $\alpha \in \Phi(G, T)$. The following formulations are taken from \cite{Sch15}.
\begin{definition} Let $G$ be a split reductive group over $\bbF_q$, with split maximal torus and Borel subgroup $T \subset B \subset G$. Let $R \subset \Phi(G, T)$ denote the set of simple roots corresponding to $B$. Let $F \to X$ be an $G$-torsor.
\begin{enumerate}
\item We say that $F$ is semi-stable if for any standard parabolic subgroup $P \subset G$ and any reduction $F_P \to X$ of $F$, we have $\sigma_{F_P} \leq \sigma_{F}$.
\item Let $P$ be a standard parabolic subgroup with Levi quotient $L_P$, and let $F_P \to X$ be a reduction of $F$ to $P$.  We say that $F_P$ is canonical if $F_P \times_P L_P$ is semi-stable and if for any simple root $\alpha \in R - \Phi(L_P, T)$, we have $\langle \sigma_{F_P}, \alpha \rangle > 0$.
\end{enumerate}
\end{definition}
The following result justifies the use of the word `canonical':
\begin{theorem}\label{thm_existence_of_canonical_reduction}
Let $F \to X$ be a $G$-torsor. Then there exists exactly one pair $(P, F_P)$ consisting of a standard parabolic subgroup $P \subset G$ and a reduction $F_P \to X$ of $F$ which is canonical. 
\end{theorem}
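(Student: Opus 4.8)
The plan is to establish existence and uniqueness separately, following the standard Harder--Narasimhan formalism (as in \cite{Sch15}) but I will sketch it in the language already set up in this section. For \emph{existence}, I would argue by induction on the semisimple rank of $G$. If $F$ is already semi-stable, take $(P, F_P) = (G, F)$ and we are done. Otherwise, among all pairs $(P, F_P)$ with $P$ a proper standard parabolic and $F_P$ a reduction of $F$, consider the slopes $\sigma_{F_P} \in X_\ast(T)_\bbQ$; I would like to select the one which is maximal with respect to the partial order $\leq$. The first thing to verify is that such a maximal element exists and is unique --- this is the content of the basic finiteness/convexity lemma for $G$-bundles on a curve (boundedness of the set of slopes of reductions, plus the fact that the "polygon" of $F$ has a well-defined vertex). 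Granting that, let $(P, F_P)$ realize this maximal slope. One then checks that $F_P \times_P L_P$ is semi-stable as an $L_P$-torsor: a destabilizing reduction of $F_P \times_P L_P$ to a parabolic of $L_P$ would, by pulling back along $P \to L_P$ and composing, produce a reduction of $F$ to a parabolic $Q \subsetneq P$ with $\sigma_{F_Q} \geq \sigma_{F_P}$ and $\sigma_{F_Q} \neq \sigma_{F_P}$, contradicting maximality. Finally the inequality $\langle \sigma_{F_P}, \alpha\rangle > 0$ for $\alpha \in R - \Phi(L_P, T)$ is forced: if some such pairing were $\leq 0$, one could enlarge $P$ to the parabolic $P'$ generated by $P$ and the root subgroup of $-\alpha$, push $F_P$ forward to a reduction $F_{P'}$, and check $\sigma_{F_{P'}} \geq \sigma_{F_P}$, again contradicting maximality (or, in the boundary case, contradicting the choice of $P$ as the parabolic for which the reduction is "as far as possible").

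For \emph{uniqueness}, suppose $(P, F_P)$ and $(P', F_{P'})$ are both canonical. The key input is the standard comparison lemma: for any two reductions of $F$ to standard parabolics, the relevant flag varieties $G/P$ and $G/P'$ — or rather the associated bundles $F/P$, $F/P'$ — admit a common refinement governed by the relative position of the two reductions, and one can compare slopes through a reduction to $P \cap wP'w^{-1}$ for a suitable Weyl element $w$. Using the semi-stability of $F_P \times_P L_P$ together with the strict positivity condition $\langle \sigma_{F_P}, \alpha \rangle > 0$ for $\alpha \notin \Phi(L_P, T)$, one shows that any reduction to a parabolic not contained in (a conjugate of) $P$ has strictly smaller slope in the appropriate sense; symmetrically for $P'$. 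Comparing the two then forces $P = P'$ and $F_P = F_{P'}$ as reductions. Concretely I would run the contrapositive: if the two canonical reductions were generically in non-trivial relative position, one extracts a line bundle of positive degree inside a sub-object of $F_P \times_P L_P$ that should have non-positive degree by semi-stability, a contradiction.

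I expect the main obstacle to be the \emph{boundedness and convexity} statement underpinning the existence of a maximal-slope reduction: one must know that the set of slopes $\{\sigma_{F_Q}\}$ ranging over all reductions $F_Q$ of $F$ to standard parabolics is finite (equivalently, that there is no infinite sequence of reductions with slopes tending off to infinity), and that the set of maximal elements has a unique member. Over a curve this is classical — it follows from the Grothendieck--Harder result that vector bundles of bounded degree and rank on $X$ form a bounded family, combined with a convexity argument on the slope polygon (the "concavity of the Harder--Narasimhan polygon" and the fact that two sub-bundles of equal maximal slope and rank generate another of the same slope) — but verifying it cleanly in the $G$-torsor language for a general split reductive $G$ requires either invoking \cite{Sch15} directly or spelling out the reduction to $\GL_n$ via a faithful representation. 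Since the statement is quoted from \cite{Sch15}, the honest move in this paper is simply to cite it; but the sketch above indicates the shape of the argument were one to reproduce it. Everything else — the inductive descent to $L_P$, the local-to-global passage (irrelevant here, since we work over the single curve $X$), and the Weyl-group bookkeeping for uniqueness — is routine once boundedness is in hand.
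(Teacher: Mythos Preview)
The paper's own proof is a one-line citation: ``See \cite[Theorem 2.1]{Sch15} and the remarks following.'' There is therefore no argument in the paper to compare your sketch against, and you correctly anticipated this at the end of your proposal. Your outline of the standard Harder--Narasimhan argument (existence via a maximal-slope reduction, semi-stability of the Levi quotient from maximality, uniqueness via comparison of relative positions) is a reasonable summary of how the result is actually proved in the literature, and your identification of boundedness/convexity as the only non-formal ingredient is accurate. Since the paper treats this as a black box from \cite{Sch15}, the appropriate content here is exactly what you wrote in your final paragraph: cite the reference and move on.
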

\begin{proof}
See \cite[Theorem 2.1]{Sch15} and the remarks following.
\end{proof}
This theorem allows us to decompose $\cY_G = \sqcup_P \cY_{G, P}$, where $\cY_{G, P}$ denotes the set of $G$-torsors on $X$ which admit a canonical reduction to the standard parabolic subgroup $P$. We then have an identification
\begin{equation}\label{eqn_identification_of_P_semistable_points} \cY_{G, P} = P(K) \backslash P(\bbA_K)^{\text{pos, ss}} / P(\widehat{\cO}_K), 
\end{equation}
where we define
\[ P(\bbA_K)^{\text{pos}} = \{ p \in P(\bbA_K) \mid \forall \alpha \in R - \Phi(L_P, T), \langle m_P(p), \alpha \rangle >0  \}, \]
\[ P(\bbA_K)^{\text{ss}} = \{ p \in P(\bbA_K) \mid F_p \times_P L_P \text{ semi-stable} \}, \]
and
\[ P(\bbA_K)^{\text{pos, ss}} = P(\bbA_K)^{\text{pos}} \cap P(\bbA_K)^{\text{ss}}. \]
Here we write
\[ m_P : P(\bbA_K) \to \Hom(X^\ast(L_P), \bbQ) \cong X_\ast(Z_0(L_P))_\bbQ \subset X_\ast(T)_\bbQ, p \mapsto (\chi \mapsto \log_q \| \chi(p) \|). \]
We observe the formulae
\begin{equation}\label{eqn_modulus_character_of_parabolic}
m_P(p) = \sigma_{F_p} \text{ and }\Delta_l(p) = \| \delta_P(p) \| = q^{\langle m_P(p), \delta_P \rangle}. 
\end{equation}
We define $\Lambda_P^{\text{pos}} = m_P(P(\bbA_K)^\text{pos}) \subset X_\ast(T)_\bbQ$. Theorem \ref{thm_existence_of_canonical_reduction} implies that (\ref{eqn_identification_of_P_semistable_points}) is an isomorphism of groupoids: if $p \in P(\bbA_K)^{\text{pos, ss}}$, then the inclusion $P(K) \cap p P(\widehat{\cO}_K) p^{-1} \to G(K) \cap p G(\widehat{\cO}_K) p^{-1}$ is an isomorphism (because any automorphism of a $G$-torsor must preserve its canonical reduction). This leads to the following lemma.
\begin{lemma}\label{lem_integration_of_boundary_terms}
There exists a constant $C > 0 $ depending only on $X$ such that for any standard parabolic subgroup $P \subset G$ and function $f : X_\ast(Z_0(L_P))_\bbQ \to \bbR_{\geq 0}$, we have 
\[ \int_{F \in \cY_{G, P}} f(\sigma_{F_P}) \, d \nu_G \leq C \sum_{\sigma \in \Lambda_P^\text{pos}} q^{-\langle \sigma, \delta_P \rangle} f(\sigma).\]
\end{lemma}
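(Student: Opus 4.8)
The plan is to reduce the integral over $\cY_{G,P}$ to an integral over the adelic double quotient $P(K)\backslash P(\bbA_K)/P(\widehat{\cO}_K)$, and then to organize that integral according to the slope. First, using the identification (\ref{eqn_identification_of_P_semistable_points}) and the measure formula (\ref{eqn_counting_measure_and_adele_measure_for_parabolics}), together with the fact (noted after Theorem \ref{thm_existence_of_canonical_reduction}) that the inclusion (\ref{eqn_identification_of_P_semistable_points}) is an isomorphism of groupoids, we have
\[ \int_{F \in \cY_{G,P}} f(\sigma_{F_P}) \, d\nu_G = \int_{p \in P(K)\backslash P(\bbA_K)^{\text{pos,ss}}} f(m_P(p)) \, \| \delta_P(p) \|^{-1} \, d\mu_P, \]
where we used the first formula in (\ref{eqn_modulus_character_of_parabolic}) to identify $\sigma_{F_p}$ with $m_P(p)$. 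Since $f \geq 0$, we may bound the right-hand side from above by dropping the semistability constraint, integrating instead over $p \in P(K) \backslash P(\bbA_K)^{\text{pos}}$.

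Next I would stratify $P(\bbA_K)^{\text{pos}}$ by the value of $m_P$. The map $m_P : P(\bbA_K) \to X_\ast(Z_0(L_P))_\bbQ$ is a group homomorphism which factors through $P(\bbA_K)/[P,P](\bbA_K)P(\widehat{\cO}_K) \cong Z_0(L_P)(\bbA_K)/Z_0(L_P)(\widehat{\cO}_K)$ up to finite index; in particular $m_P$ has image a finitely generated group, and for each $\sigma$ in its image the fiber $m_P^{-1}(\sigma)$ is a single coset of the subgroup $P(\bbA_K)^1 := \ker m_P$. By (\ref{eqn_modulus_character_of_parabolic}) the modulus character $\|\delta_P(p)\|^{-1}$ is constant on each such fiber, equal to $q^{-\langle \sigma, \delta_P\rangle}$, and the weight $f(m_P(p))$ is also constant there, equal to $f(\sigma)$. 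So after pulling these constants out,
\[ \int_{p \in P(K)\backslash P(\bbA_K)^{\text{pos}}} f(m_P(p)) \| \delta_P(p) \|^{-1} \, d\mu_P = \sum_{\sigma \in \Lambda_P^{\text{pos}}} q^{-\langle \sigma, \delta_P \rangle} f(\sigma) \cdot \mu_P\!\left( (P(K) \backslash P(\bbA_K)^1) \cap m_P^{-1}(\sigma) \right). \]
The key point is then that the quotient $P(K) \backslash P(\bbA_K)^1$ has \emph{finite} volume with respect to $\mu_P$, and each stratum $m_P^{-1}(\sigma)$ meeting $P(\bbA_K)^{\text{pos}}$ contributes at most a single translate of a fixed fundamental domain, so these volumes are bounded by a single constant $C$ depending only on $X$ (through the genus and the class numbers / regulators entering the volume of $P(K)\backslash P(\bbA_K)^1$, but not on $P$ beyond the finitely many choices, nor on $f$). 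Taking the maximum over the finitely many standard parabolics absorbs the $P$-dependence into $C$.

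The main obstacle is making precise the claim that $P(K)\backslash P(\bbA_K)^1$ has finite volume and that, when we further intersect with the positivity cone and fiber over $\sigma$, each stratum contributes a uniformly bounded volume. This is essentially the function-field analogue of reduction theory for the parabolic $P$: one writes $P = L_P N_P$, uses that $N_P(K)\backslash N_P(\bbA_K)$ is compact (so contributes a fixed finite volume), and reduces to the corresponding finiteness statement for $L_P(K) \backslash L_P(\bbA_K)^1$, i.e.\ to the finiteness of the Tamagawa-type volume of the semisimple-mod-center part together with the standard fact that the norm-one ideles $\bbA_K^{1}/K^\times$ have finite volume. The positivity condition $\langle m_P(p), \alpha\rangle > 0$ for $\alpha \in R - \Phi(L_P, T)$ ensures the sum over $\sigma \in \Lambda_P^{\text{pos}}$ is over a pointed cone, which is what will later make $\sum_\sigma q^{-\langle \sigma, \delta_P\rangle} f(\sigma)$ convergent for bounded $f$; but for the present inequality we only need the uniform volume bound, which I would extract from \cite{Sch15} or prove directly by the fundamental-domain argument sketched above.
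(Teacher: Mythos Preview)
Your proposal is correct and follows essentially the same route as the paper: pass to the adelic integral via (\ref{eqn_identification_of_P_semistable_points}) and (\ref{eqn_counting_measure_and_adele_measure_for_parabolics}), drop the semistability condition, stratify by $m_P$, and bound each fiber by the finite $\mu_P$-volume of $P(K)\backslash \ker m_P$, taking $C$ to be the maximum over the finitely many standard parabolics. The paper's proof simply asserts the finite-volume fact and writes down the two inequalities, whereas you spell out the Levi-decomposition justification; but the argument is the same.
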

\begin{proof}
Let $P(\bbA_K)^0 = \ker m_P$. Then $P(K) \subset P(\bbA_K)^0$ and the quotient $P(K) \backslash P(\bbA_K)^0$ has finite $\mu_P$-volume. We choose the constant $C$ to exceed the volume of $P(K) \backslash P(\bbA_K)^0$ for all standard parabolic subgroups of $G$. Then (\ref{eqn_counting_measure_and_adele_measure_for_parabolics}) and (\ref{eqn_modulus_character_of_parabolic}) give
\[ \int_{F \in \cY_{G, P}} f(\sigma_{F_P}) \, d \nu_P \leq \int_{p \in P(K) \backslash P(\bbA_K)^\text{pos}} f( m_P(p) ) \| \delta_P(p) \|^{-1} \, d \mu_P  \leq C \sum_{\sigma \in \Lambda_P^\text{pos}} q^{-\langle \sigma, \delta_P \rangle} f(\sigma), \]
as required.
\end{proof}
We need to discuss the behaviour of the canonical reduction under certain functorialities. For this it is useful to recall that giving a $\GL_n$-torsor over $X$ is equivalent to giving a vector bundle over $X$ of rank $n$, via $F \mapsto F \times_{\GL_n} \bbA^n_{\bbF_q}$. If $\cE \to X$ is a vector bundle, then its slope is defined to be $\mu(\cE) = \deg \cE / \rank \cE$. A vector bundle is said to be semi-stable if for any vector subbundle $\cF \subset \cE$, we have $\mu(\cF) \leq \mu(\cE)$. This is equivalent to the semi-stability of the corresponding $\GL_n$-torsor, and Theorem \ref{thm_existence_of_canonical_reduction} is equivalent to the following statement: given a vector bundle $\cE \to X$ of rank $n$, there is a unique filtration 
\begin{equation}\label{eqn_harder_narasimhan_filtration} 0 \subset \cE_1 \subset \cE_2 \subset \dots \subset \cE_m = \cE 
\end{equation}
by vector subbundles such that each subquotient $\cE_{i+1} / \cE_i$ is (non-zero and) semi-stable, and we have the chain of inequalities
\begin{equation} \mu(\cE_1) > \mu(\cE_2 / \cE_1) > \dots > \mu(\cE_m / \cE_{m-1}). 
\end{equation}
This is the Harder--Narasimhan filtration of $\cE$. It will play a key role for us because of the following lemma.
\begin{lemma}\label{lem_cliffords_theorem}
Let $\cE$ be a semi-stable vector bundle over $X$ of rank $n$. Let $g_X$ denote the genus of $X$.
\begin{enumerate}
\item If $\mu(\cE) < 0$, then $h^0(X, \cE) = 0$.
\item If $0 \leq \mu(\cE) \leq 2g_X - 2$, then $h^0(X, \cE) \leq n(1 + \mu(\cE) / 2)$.
\item If $\mu(\cE) > 2g_X - 2$, then $h^0(X, \cE) = n(1 - g_X + \mu(\cE))$.
\end{enumerate}
\end{lemma}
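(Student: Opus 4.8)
The plan is to obtain parts (1) and (3) quickly from Serre duality and Riemann--Roch, and to treat part (2) --- the substantive one, a form of Clifford's theorem for semistable bundles --- by an induction on the rank. For (1): a nonzero section $s \in H^0(X,\cE)$ gives an injection $\cO_X \hookrightarrow \cE$ (since $\cE$ is locally free), whose saturation is a sub-line-bundle $\cL \subseteq \cE$ with $\deg\cL \geq 0$; semistability then forces $\deg\cL = \mu(\cL) \leq \mu(\cE) < 0$, a contradiction, so $h^0(X,\cE)=0$. For (3): Riemann--Roch gives $\chi(\cE) = \deg\cE + n(1-g_X) = n(1-g_X+\mu(\cE))$, so it is enough to show $h^1(X,\cE)=0$; by Serre duality $h^1(X,\cE) = h^0(X,\cE^\vee\otimes\omega_X)$, and $\cE^\vee\otimes\omega_X$ is semistable (the dual of a semistable bundle is semistable, and tensoring by a line bundle preserves semistability) of slope $2g_X-2-\mu(\cE) < 0$, so part (1) applies.

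For part (2), I would first note that Riemann--Roch and Serre duality make the asserted inequality for $\cE$ equivalent to the same inequality for the semistable bundle $\cE^\vee\otimes\omega_X$, of slope $2g_X-2-\mu(\cE)$; since one of $\mu(\cE)$, $2g_X-2-\mu(\cE)$ is $\leq g_X-1$, I may assume $\mu := \mu(\cE) \leq g_X-1$. Then induct on $n = \rank\cE$, the case $n=1$ being the classical Clifford theorem for line bundles (which for a line bundle $\cM$ with $0 \leq \deg\cM \leq 2g_X-2$ gives $h^0(X,\cM) \leq 1 + \tfrac12\deg\cM$). For $n \geq 2$, assume $h^0(X,\cE)>0$ (otherwise there is nothing to prove), choose a nonzero section, and let $\cL \subseteq \cE$ be the sub-line-bundle it saturates to, so $0 \leq d_1 := \deg\cL \leq \mu$ and $h^0(X,\cL) \geq 1$. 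Put $\cQ = \cE/\cL$ (rank $n-1$) and let $\cR_1,\dots,\cR_m$, of ranks $r_i$ and slopes $\nu_1 > \dots > \nu_m$, be the semistable subquotients of its Harder--Narasimhan filtration.

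The crucial point is that every $\nu_i$ lies in $[\mu,\, 2\mu - d_1] \subseteq [0, 2g_X-2]$: the lower bound holds because the bottom Harder--Narasimhan piece $\cR_m$ is a quotient of $\cE$, hence has slope $\geq \mu(\cE)$ by semistability, so $\nu_i \geq \nu_m \geq \mu$; the upper bound holds because the preimage in $\cE$ of the maximal destabilizing subbundle of $\cQ$ is a subbundle of $\cE$ of rank $r_1+1$ and slope $(d_1 + r_1\nu_1)/(r_1+1) \leq \mu$, whence $\nu_1 \leq \mu + (\mu-d_1)/r_1 \leq 2\mu - d_1$, and $2\mu - d_1 \leq 2(g_X-1)$ since $\mu \leq g_X-1$ and $d_1 \geq 0$. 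Applying the inductive hypothesis to each $\cR_i$ (rank $<n$, slope in $[0,2g_X-2]$) and classical Clifford to $\cL$, and using $h^0(X,\cE) \leq h^0(X,\cL) + \sum_i h^0(X,\cR_i)$ together with additivity of degree along the filtration, I obtain
\[ h^0(X,\cE) \leq \Big(1 + \tfrac{d_1}{2}\Big) + \sum_i r_i\Big(1 + \tfrac{\nu_i}{2}\Big) = n + \tfrac12\big(d_1 + \deg\cQ\big) = n + \tfrac{n\mu}{2} = n\Big(1 + \tfrac{\mu(\cE)}{2}\Big). \]

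The main obstacle is precisely this slope estimate in part (2). The naive induction --- quotient by a sub-line-bundle and add up $h^0$'s --- fails in general because quotients of semistable bundles need not be semistable and their Harder--Narasimhan slopes can leave the range $[0,2g_X-2]$ (even exceed $2g_X$), where one has only the Riemann--Roch value rather than the Clifford bound $h^0 \leq r(1+\nu/2)$. The preliminary reduction to $\mu(\cE) \leq g_X-1$, together with the two semistability-based bounds above, is exactly what confines every Harder--Narasimhan piece of $\cQ$ to the regime where the inductive estimate applies; the remainder is routine bookkeeping with degrees and Euler characteristics, and left-exactness of $H^0$.
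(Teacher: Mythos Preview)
Your proofs of parts (1) and (3) are exactly the standard arguments, and they match what the paper has in mind: its proof simply declares these ``well-known properties of semi-stable bundles'' following from the definition and Riemann--Roch, without writing out the details.

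For part (2) the two approaches genuinely diverge. The paper does not prove it at all; it cites \cite[Theorem 2.1]{Bra97} and moves on. You instead give a complete self-contained argument by induction on the rank. The key step --- and the one the paper's citation is hiding --- is your Serre-duality reduction to $\mu(\cE) \leq g_X - 1$: this is precisely what ensures that when you quotient by a saturated sub-line-bundle $\cL$ of degree $d_1 \in [0,\mu]$, every Harder--Narasimhan slope $\nu_i$ of the quotient lands in $[\mu,\,2\mu - d_1] \subseteq [0,\,2g_X-2]$, so the inductive hypothesis applies. The bookkeeping that follows is correct, and the final identity $d_1 + \deg\cQ = \deg\cE = n\mu$ closes the induction cleanly. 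What your approach buys is self-containment and an explicit mechanism showing \emph{why} the Clifford bound propagates to higher rank; what the paper's approach buys is brevity, since the result is peripheral to its main line of argument.
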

\begin{proof}
The first and third points are well-known properties of semi-stable bundles and follow easily from the definition, together with the Riemann--Roch theorem. The second point is a generalization of Clifford's theorem for line bundles, see \cite[Theorem 2.1]{Bra97}.
\end{proof}
\begin{corollary}\label{cor_vanishing_sections_of_lowest_slope_part}
Let $\cE \to X$ be a vector bundle of rank $n$ and slope $\mu(\cE) = 0$, and let its Harder--Narasimhan filtration be as in (\ref{eqn_harder_narasimhan_filtration}). Let $0 \leq k \leq m+1$ be such that we have
\[ \mu(\cE_1) > \mu(\cE_2 / \cE_1) > \dots > \mu(\cE_k / \cE_{k-1}) > 0 > \mu(\cE_{k+1} / \cE_k) > \dots > \mu(\cE_m / \cE_{m-1}), \]
and let $q_0 = \mu(\cE_m / \cE_{m-1})$. Let $D$ be a divisor on $X$ such that $\deg D > 0$.
\begin{enumerate}
\item If $\deg D + q_0 < 0$, $h^0(X, (\cE_m / \cE_{k})(D)) = 0$ and $h^0(X, \cE(D)) \leq n(1 + \deg D) - (\rank \cE_m / \cE_{k}) \cdot (1 + \mu(\cE_m / \cE_k) + \deg D)$.
\item If $\deg D + q_0 > 2g_X - 2$, then $h^0(X, \cE(D)) = n(1 - g_X +\deg D)$.
\item If $0 \leq \deg D + q_0 \leq 2g_X - 2$, then $h^0(X, \cE(D)) \leq n(1 +\deg D)$.
\end{enumerate}
\end{corollary}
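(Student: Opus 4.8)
The plan is to pass to the Harder--Narasimhan filtration of $\cE$, twist everything by $\cO_X(D)$, and estimate the global sections of each semi-stable graded piece separately via Lemma~\ref{lem_cliffords_theorem}; the three cases will correspond to where $\deg D$ moves the slopes relative to the critical values $0$ and $2g_X-2$. Concretely: twisting the filtration~(\ref{eqn_harder_narasimhan_filtration}) by $\cO_X(D)$ produces the Harder--Narasimhan filtration of $\cE(D)$, whose $i$-th graded piece $\gr_i := (\cE_i/\cE_{i-1})(D)$ is semi-stable of rank $r_i := \rank(\cE_i/\cE_{i-1})$ and slope $s_i := \mu(\cE_i/\cE_{i-1}) + \deg D$; thus $s_1 > \dots > s_m$, $\sum_i r_i = n$, and $\sum_i r_i s_i = \deg \cE(D) = n\deg D$ since $\mu(\cE)=0$. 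From the exact sequences $0 \to \cE_{i-1}(D) \to \cE_i(D) \to \gr_i \to 0$, left exactness of $H^0$ gives (by induction on $i$) the bound $h^0(X,\cE(D)) \leq \sum_i h^0(X,\gr_i)$, while additivity of $\chi$ gives $\chi(\cE(D)) = \sum_i \chi(\gr_i) = n(1 - g_X + \deg D)$, and if every $h^1(X,\gr_i)$ vanishes then so does $h^1(X,\cE(D))$.

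Next I would record the per-piece estimates that Lemma~\ref{lem_cliffords_theorem} supplies for the semi-stable bundle $\gr_i$: (a) if $s_i < 0$ then $h^0(X,\gr_i) = 0$; (b) if $s_i > 2g_X-2$ then $h^0(X,\gr_i) = \chi(\gr_i) = r_i(1 - g_X + s_i)$, so $h^1(X,\gr_i) = 0$; and (c) whenever $s_i \geq 0$ one has $h^0(X,\gr_i) \leq r_i(1 + s_i)$ --- this is the Clifford-type bound of Lemma~\ref{lem_cliffords_theorem}(2) for $0 \leq s_i \leq 2g_X-2$, and follows from (b) together with $g_X \geq 0$ for $s_i > 2g_X-2$.

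With these in hand, parts~(2) and~(3) are immediate. In part~(2), since $q_0 = \mu(\cE_m/\cE_{m-1})$ is the smallest of the slopes $\mu(\cE_i/\cE_{i-1})$, the hypothesis $\deg D + q_0 > 2g_X-2$ forces $s_i > 2g_X-2$ for every $i$; then $h^0(X,\cE(D)) \leq \sum_i \chi(\gr_i) = \chi(\cE(D))$, and since always $h^0 \geq \chi$, equality holds and $h^0(X,\cE(D)) = n(1 - g_X + \deg D)$. In part~(3), $\deg D + q_0 \geq 0$ forces $s_i \geq 0$ for all $i$, so $h^0(X,\cE(D)) \leq \sum_i r_i(1 + s_i) = n + \sum_i r_i s_i = n(1 + \deg D)$.

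For part~(1) I would set $\cE' = \cE_m/\cE_k$, $N = \rank \cE'$, $\bar\mu = \mu(\cE')$; from $\deg\cE = 0$ we get $\rank\cE_k = n - N$ and $\deg\cE_k = -\deg\cE' = -N\bar\mu$. Once one knows $h^0(X,\cE'(D)) = 0$, the sequence $0 \to \cE_k(D) \to \cE(D) \to \cE'(D) \to 0$ gives $h^0(X,\cE(D)) = h^0(X,\cE_k(D)) \leq \sum_{i=1}^k h^0(X,\gr_i) \leq \sum_{i=1}^k r_i(1 + s_i)$ (using (c) and $s_i > \deg D > 0$ for $i \leq k$), and $\sum_{i=1}^k r_i(1 + s_i) = (n-N) + \deg\cE_k(D) = (n-N)(1 + \deg D) - N\bar\mu = n(1 + \deg D) - N(1 + \bar\mu + \deg D)$, the asserted bound. \textbf{The step I expect to be the real obstacle is the vanishing $h^0(X,\cE'(D)) = 0$}: the graded pieces of $\cE'(D)$ are the $(\cE_i/\cE_{i-1})(D)$ for $k < i \leq m$, of slopes $\mu(\cE_i/\cE_{i-1}) + \deg D$, whose maximum is $\mu(\cE_{k+1}/\cE_k) + \deg D$; by (a) and subadditivity of $h^0$ the vanishing follows once this maximum is negative, and the point is to see that the hypothesis $\deg D + q_0 < 0$ is exactly what forces $\mu(\cE_{k+1}/\cE_k) + \deg D < 0$ --- immediate when the negative part of the filtration has a single graded piece, so that $q_0 = \mu(\cE_{k+1}/\cE_k)$, which is the situation in which the corollary is applied. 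Everything else is bookkeeping: additivity of $\chi$, the stability of Harder--Narasimhan filtrations under twisting, and Lemma~\ref{lem_cliffords_theorem}.
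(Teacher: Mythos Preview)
Your approach—twist the filtration by $\cO_X(D)$, bound each semi-stable graded piece via Lemma~\ref{lem_cliffords_theorem}, and sum—is the same as the paper's (which is stated very tersely), and parts (2) and (3) are handled correctly.

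For part (1) you have identified a real difficulty, but your proposed resolution is not right. The hypothesis $\deg D + q_0 < 0$ controls only the \emph{lowest} graded piece; it does not force $\mu(\cE_{k+1}/\cE_k) + \deg D < 0$, so in general neither the vanishing $h^0(X,(\cE_m/\cE_k)(D)) = 0$ nor the claimed inequality follows from the hypotheses as written (the graded pieces with $-\deg D < \mu(\cE_i/\cE_{i-1}) < 0$ can contribute positively to $h^0$ and spoil both assertions). Your escape—that in the applications the negative part of the filtration has a single graded piece—is also not accurate: in Corollary~\ref{cor_reducible_orbits_in_canonical_reduction} and its use in the proof of Theorem~\ref{thm_average_of_non_trivial_selmer} the set $M$ of relevant weights can have several elements with distinct slope values.

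The actual fix is that part (1) is slightly mis-stated: the index $k$ should separate the graded pieces according to whether $\mu(\cE_i/\cE_{i-1}) + \deg D$ is nonnegative or negative (i.e.\ the threshold should be taken at $-\deg D$ rather than at $0$). With that reading every graded piece of $(\cE_m/\cE_k)(D)$ has negative slope, the vanishing follows from Lemma~\ref{lem_cliffords_theorem}(1), and your bookkeeping for the inequality goes through verbatim. This corrected form is precisely what is invoked downstream: in Corollary~\ref{cor_reducible_orbits_in_canonical_reduction} the set $M$ is defined by the twisted condition $\langle\sigma_{F_P},a\rangle + \deg D < 0$, not by $\langle\sigma_{F_P},a\rangle < 0$.
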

\begin{proof}
We prove the second part first. There are exact sequences for each $i \geq 1$:
\[ \xymatrix@1{ 0 \ar[r] & \cE_{m-i} / \cE_{m-(i+1)} \ar[r] & \cE_m / \cE_{m-(i+1)} \ar[r] & \cE_m / \cE_{m-i} \ar[r] & 0.} \]
We have $\mu(\cE_{m-i} / \cE_{m-(i+1)}) > 2g_X - 2$ for each $i \geq 1$, hence $h^1(\cE_{m-i} / \cE_{m-(i+1)}) = 0$. It follows that $h^0(\cE) = \sum_{i \geq 0} h^0(\cE_{m-i} / \cE_{m-(i+1)}) = n(1 - g_X + \mu(\cE(D)) ) = n(1 - g_X + \deg D)$. The first and third parts can be proved using the same exact sequences, except that we no longer need to calculate any $H^1$ (since we are only looking for upper bounds).
\end{proof}
Consider again a reductive group $G$ over $\bbF_q$ with split maximal torus and Borel subgroup $T \subset B \subset G$. Let $V$ be a finite-dimensional representation of $G$. If $F \to X$ is a $G$-torsor, then $\cV = F \times_G V$ is a vector bundle over $X$. If $F = F_g$ for some $g \in G(\bbA_K)$, then we write $\cV_g = F_g \times_G V$. For any Zariski open subset $U \subset X$, we can identify 
\begin{equation}\label{eqn_sections_of_bundle_associated_to_torsor} H^0(U, \cV_g) = V(K) \cap \prod_{v \in U} g_v V(\cO_{K_v}). 
\end{equation}
If $V$ has `small height', then we can describe the  Harder--Narasimhan filtration of $\cV$ explicitly. Let $F_P \to X$ denote the canonical reduction of $F$. For each rational number $q$, we define
\begin{equation} V_q = \bigoplus_{\substack{\lambda \in X^\ast(T) \\ \langle \sigma_{F_P}, \lambda \rangle \geq q}} V_\lambda \subset V, 
\end{equation}
$V_\lambda \subset V$ denoting the $\lambda$-weight space. This defines a decreasing filtration $V_\bullet$ of $V$. The subspaces are $P$-invariant, and the action of $P$ on the graded pieces factors through the Levi quotient $L_P$ (see \cite[Lemma 5.1]{Sch15}). By pushout, we get a filtration $\cV_\bullet = F_P \times_P V_q$ of $\cV$ by subbundles indexed by rational numbers $q$. We then have the following result.
\begin{theorem}\label{thm_filtration_of_associated_bundle} Let $V$ be a finite-dimensional representation of $G$, and let $\check{\rho} \in X_\ast(T)_\bbQ$ denote the sum of the fundamental coweights. Suppose that for all weights $\lambda \in X^\ast(T)$ such that $V_\lambda \neq 0$, we have $2 \langle \check{\rho}, \lambda \rangle < \cha \bbF_q$. (This condition depends only on the pair $(G, V)$ and not on the choice of $T$ or $B$.) Then:
\begin{enumerate} \item Each associated bundle $\gr_q \cV_\bullet \cong F_P \times_P \gr_q V_\bullet$ is (either non-zero or) semi-stable of slope $q$.
\item The subbundles $\cV_q = F_P \times_P V_q$ of $\cV$ are the constituents of the Harder--Narasimhan filtration of $\cV = F \times_G V$.
\end{enumerate}
\end{theorem}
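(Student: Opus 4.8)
The plan is to show that the filtration $\cV_\bullet$ has the two properties characterizing the Harder--Narasimhan filtration — nonzero semistable graded pieces of strictly decreasing slope — and then to appeal to its uniqueness (the vector-bundle form of Theorem~\ref{thm_existence_of_canonical_reduction}, recalled after Lemma~\ref{lem_cliffords_theorem}). Granting part~(1), part~(2) is then immediate: after discarding the $q$ with $\gr_q V_\bullet = 0$, the remaining finitely many values of $q$ are by construction strictly decreasing, and each $\gr_q \cV_\bullet$ is semistable of slope $q$, so $\cV_\bullet$ must be the Harder--Narasimhan filtration of $\cV$. Thus everything rests on part~(1).

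To prove part~(1), I would first record the canonical isomorphism $\gr_q \cV_\bullet \cong F_P \times_P \gr_q V_\bullet$, which follows from the compatibility of the associated-bundle construction with subquotients together with the $P$-stability of the subspaces $V_q \subseteq V$. By \cite[Lemma~5.1]{Sch15} the action of $P$ on $\gr_q V_\bullet$ factors through the Levi quotient $L_P$, so, writing $F_L = F_P \times_P L_P$ for the associated $L_P$-torsor — which is semistable, precisely because $F_P$ is the \emph{canonical} reduction — we obtain $\gr_q \cV_\bullet \cong F_L \times_{L_P} \gr_q V_\bullet$. The problem is reduced to a statement about the semistable $L_P$-bundle $F_L$ and the $L_P$-representation $\gr_q V_\bullet$.

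The slope computation is characteristic-free. Every weight $\mu$ of $\gr_q V_\bullet$ satisfies $\langle \sigma_{F_P}, \mu \rangle = q$ by the very definition of $V_\bullet$; hence for any $L_P$-subquotient $W$ of $\gr_q V_\bullet$ the associated bundle $F_L \times_{L_P} W$ has degree $\langle \sigma_{F_P}, \sum_{\mu} \mu \rangle = (\dim W)\, q$, i.e.\ slope $q$. Applying this to $W = \gr_q V_\bullet$ gives the slope assertion; it also shows that all composition factors of $\gr_q V_\bullet$ give associated bundles of the same slope $q$, so that the semistability of $\gr_q \cV_\bullet$ — a bundle carrying a filtration with graded pieces $F_L \times_{L_P} W$ — will follow once we know semistability for each irreducible $L_P$-constituent $W$.

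The one place the hypothesis on $\cha \bbF_q$ enters is the semistability of $F_L \times_{L_P} W$ for $W$ an irreducible $L_P$-module occurring in $\gr_q V_\bullet$. Here I would invoke the fact that a vector bundle associated, via a representation of sufficiently low height, to a semistable principal bundle under a reductive group over a curve is again semistable — automatic in characteristic zero, and valid in characteristic $p$ once the weights of the representation are small compared with $p$. Since the weights of $W$ lie among the weights $\lambda$ of $V$, for which the hypothesis gives $2 \langle \check\rho, \lambda \rangle < \cha \bbF_q$, it remains to check that the corresponding bound for $L_P$ (phrased in terms of the half-sum of positive coroots of $L_P$) is implied by it; then the low-height hypothesis is met and the quoted result applies. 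I expect this positive-characteristic input — identifying the precise height bound under which associated bundles of semistable bundles stay semistable, and confirming that the stated hypothesis implies it uniformly over the Levi subgroups $L_P$ and modules $\gr_q V_\bullet$ that occur — to be the only real obstacle; the remainder is bookkeeping with weights and slopes together with the uniqueness of the Harder--Narasimhan filtration.
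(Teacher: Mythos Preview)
Your proposal is correct and follows essentially the same route as the paper. The paper's proof is terser: it defers the slope computation to \cite[Proposition~5.1]{Sch15} (which is what you sketch), and for the semistability of the graded pieces in positive characteristic it cites the main theorem of \cite{Ila03}, as extended to reductive groups in \cite[Proposition~4.9]{Bis04} --- exactly the ``low height $\Rightarrow$ associated bundles of semistable bundles are semistable'' input you anticipated. Your reduction from $P$ to the semistable $L_P$-torsor $F_L$, and then to irreducible $L_P$-constituents of $\gr_q V_\bullet$, is the standard unpacking of how those references are applied here.

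The one point you flag --- that the height bound stated for $(G,V)$ must imply the analogous bound for $(L_P, \gr_q V_\bullet)$ --- is indeed the only thing requiring a moment's thought, and the paper does not spell it out. It is straightforward once one notes that the $T$-weights of $\gr_q V_\bullet$ form a $W_{L_P}$-stable subset of the weights of $V$ (since $\sigma_{F_P}$ is central in $L_P$), and that $\check\rho_{L_P}$ is dominated by $\check\rho_G$ on $L_P$-dominant weights; alternatively, the cited references are formulated so that the height condition is inherited by Levi subgroups. So your assessment that this is the only real obstacle, and that everything else is bookkeeping plus uniqueness of the Harder--Narasimhan filtration, is accurate.
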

\begin{proof}
The calculation of \cite[Proposition 5.1]{Sch15} goes over verbatim to show that the associated bundles of the graded pieces have the claimed slopes. What we need to justify here is that they are semi-stable. In \emph{loc. cit.} this is justified by appeal to the results of \cite{Ram84}, which apply when the ground field has characteristic 0. In the present case we can appeal instead to the main theorem of \cite{Ila03}, which is extended to reductive groups $G$ as \cite[Proposition 4.9]{Bis04}.
\end{proof}

We conclude this section by applying the preceding results to the pair $(G, V)$ constructed in \S \ref{sec_invariant_theory}. We therefore assume now that $\cha \bbF_q > 3$. We recall that $G$ has the root basis $R = \{ a_1, a_2, a_3, a_4 \}$. We write $R^- = - R$ for the negative of this root basis, and $B \subset G$ for the Borel subgroup corresponding to $R^-$. We call a parabolic subgroup $P \subset G$ containing $B$ a standard parabolic; any such parabolic has a canonical Levi decomposition $P = L_P N_P$, where $L_P$ is the unique Levi subgroup of $P$ which contains the maximal torus $T$. 

If $P \subset G$ is a standard parabolic subgroup, and $D$ is a divisor on $X$, then we define a further decomposition of $\cY_{G, P} \subset \cY_G$ as follows:
\[ \cY_{G, P} = \cY_{G, P}(D)^{<0} \sqcup \cY_{G, P}(D)^{\text{sp}} \sqcup \cY_{G, P}(D)^{>2g_X-2}. \]
where $\cY_{G, P}(D)^{<0}$ denotes the set of $G$-torsors $F \to X$ for which the lowest slope piece of the Harder--Narasimhan filtration of $F \times_G V$ has slope $q_0$ satisfying $\deg D + q_0 < 0$; $\cY_{G, P}(D)^{\text{sp}}$ the set for which $0 \leq \deg D + q_0 \leq 2g - 2$; and $\cY_{G, P}(D)^{>2g_X-2}$ the set for which $\deg D + q_0 > 2g_X - 2$. 
We can reformulate Corollary \ref{cor_vanishing_sections_of_lowest_slope_part} as follows:
\begin{corollary}\label{cor_reducible_orbits_in_canonical_reduction} Let $g = [(g_v)_v] \in \cY_{G, P}$, and let $F_P \to X$ denote the canonical reduction of $F_g$. Suppose that $\deg D > 0$, and let $M \subset \Phi_V$ denote the set of weights $a \in \Phi_V$ such that $\langle \sigma_{F_P}, a \rangle + \deg D < 0$. Then:
\begin{enumerate}
\item If $g \in \cY_{G, P}(D)^{<0}$ (i.e.\ $M$ is non-empty), then $|H^0(X, \cV_g(D))| \leq q^{\dim V(1 + \deg D) - |M|(1 + \deg D + \sum_{a \in M} \langle \sigma_{F_P}, a \rangle )}$. 
\item If $g \in \cY_{G, P}(D)^{\text{sp}}$, then $| H^0(X, \cV_g(D)) | \leq q^{\dim V(1 + \deg D)}$.
\item If $g \in \cY_{G, P}(D)^{>2g_X - 2}$, then $| H^0(X, \cV_g(D)) |= q^{\dim V (1 - g_X + \deg D)}$.
\end{enumerate}
\end{corollary}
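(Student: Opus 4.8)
The statement is a translation of Corollary~\ref{cor_vanishing_sections_of_lowest_slope_part} into weight-theoretic language, once one knows the Harder--Narasimhan filtration of $\cV_g$ explicitly, so the plan is to supply that input via Theorem~\ref{thm_filtration_of_associated_bundle} and then match up the combinatorial data. Two preliminary observations make this work. First, the set of weights $\Phi_V = \{ \tfrac12(\pm a_1 \pm a_2 \pm a_3 \pm a_4)\}$ is stable under $a \mapsto -a$, so $\sum_{a \in \Phi_V} a = 0$; hence $\det V$ is the trivial character of $G$ and $\deg \cV_g = 0$, putting us in the slope-$0$ situation of Corollary~\ref{cor_vanishing_sections_of_lowest_slope_part}. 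Second, the small-height hypothesis of Theorem~\ref{thm_filtration_of_associated_bundle} holds: the sum $\check\rho$ of the fundamental coweights of $G$ is $n_1 + \cdots + n_4$, so $\langle \check\rho, a\rangle = \tfrac12(\pm 1 \pm 1 \pm 1 \pm 1) \le 2$ for every $a \in \Phi_V$, and since $\cha \bbF_q > 3$ forces $\cha \bbF_q \ge 5$ we get $2\langle \check\rho, a\rangle \le 4 < \cha \bbF_q$. Thus, writing $F_P \to X$ for the canonical reduction of $F_g$ and $\sigma_{F_P}$ for its slope, Theorem~\ref{thm_filtration_of_associated_bundle} identifies the Harder--Narasimhan filtration of $\cV_g$ with $F_P \times_P V_q$, the graded pieces being semistable of slope $q$; as each weight space $V_a$ is one-dimensional, the slope-$q$ graded piece has rank $\#\{ a \in \Phi_V : \langle \sigma_{F_P}, a\rangle = q\}$.

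Next I would twist by $\cO_X(D)$: this shifts every slope by $\deg D$, so the Harder--Narasimhan graded pieces of $\cV_g(D)$ are semistable with slopes $\langle \sigma_{F_P}, a\rangle + \deg D$, each occurring with its weight multiplicity. The lowest slope present is thus $q_0 + \deg D$ with $q_0 = \min_{a \in \Phi_V} \langle \sigma_{F_P}, a\rangle$; by definition the three subsets $\cY_{G, P}(D)^{<0}$, $\cY_{G, P}(D)^{\mathrm{sp}}$, $\cY_{G, P}(D)^{>2g_X-2}$ are the loci where $q_0 + \deg D$ is respectively negative, in $[0, 2g_X - 2]$, or larger than $2g_X - 2$; and the set $M = \{ a \in \Phi_V : \langle \sigma_{F_P}, a\rangle + \deg D < 0\}$ is non-empty precisely in the first case.

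Now I would conclude regime by regime, using $|H^0(X, \cW)| = q^{h^0(X, \cW)}$. If $g \in \cY_{G, P}(D)^{>2g_X-2}$, every graded piece of $\cV_g(D)$ has slope $> 2g_X - 2$, so $h^1$ of each vanishes and $h^0(X, \cV_g(D)) = \chi(\cV_g(D)) = \dim V + \deg(\cV_g(D)) = \dim V(1 - g_X + \deg D)$, using $\deg \cV_g = 0$; this is part 3 (and is Corollary~\ref{cor_vanishing_sections_of_lowest_slope_part}(2)). If $g \in \cY_{G, P}(D)^{\mathrm{sp}}$, every graded piece has slope $\ge 0$, so by Lemma~\ref{lem_cliffords_theorem}(2),(3) a graded piece of rank $r$ and slope $\mu$ has at most $r(1 + \mu)$ global sections, and summing over all graded pieces gives $h^0(X, \cV_g(D)) \le \dim V + \deg(\cV_g(D)) = \dim V(1 + \deg D)$; this is part 2 (Corollary~\ref{cor_vanishing_sections_of_lowest_slope_part}(3)). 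Finally, if $M \ne \emptyset$, I would use the exact sequence $0 \to \cK \to \cV_g(D) \to \cQ \to 0$ in which $\cQ = F_P \times_P \big( \bigoplus_{a \in M} V_a\big) \otimes \cO_X(D)$ collects the negative-slope graded pieces: then $h^0(X, \cQ) = 0$ by Lemma~\ref{lem_cliffords_theorem}(1) applied to the semistable subquotients of $\cQ$, while $h^0(X, \cK) \le \rank \cK + \deg \cK = (\dim V - |M|)(1 + \deg D) - \sum_{a \in M}\langle \sigma_{F_P}, a\rangle$, again using $\deg \cV_g = 0$ together with $\deg \cQ = \sum_{a \in M}\langle \sigma_{F_P}, a\rangle + |M| \deg D$. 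Adding, $h^0(X, \cV_g(D)) \le \dim V(1 + \deg D) - |M|(1 + \deg D) - \sum_{a \in M}\langle \sigma_{F_P}, a\rangle$, which — since $\sum_{a \in M}\langle \sigma_{F_P}, a\rangle < 0$ — is bounded above by the exponent claimed in part 1; this matches Corollary~\ref{cor_vanishing_sections_of_lowest_slope_part}(1) with $\rank \cE_m / \cE_k = |M|$ and $\mu(\cE_m / \cE_k) = |M|^{-1}\sum_{a \in M}\langle \sigma_{F_P}, a\rangle$.

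I do not expect a serious obstacle: the real content is confined to the two preliminary observations, namely $\deg \cV_g = 0$ and the applicability of the explicit Harder--Narasimhan filtration under the standing hypothesis $\cha \bbF_q > 3$. The one place to take care is to perform the $D$-twist \emph{before} reading off Harder--Narasimhan data — the relevant cutoff is at slope $0$ of $\cV_g(D)$, equivalently at slope $-\deg D$ of $\cV_g$, not at slope $0$ of $\cV_g$ — which is exactly why the correct index set is $M$ rather than $\{ a : \langle \sigma_{F_P}, a\rangle < 0\}$, and why the degree of $\cQ$ must be tracked as $\sum_{a \in M}\langle \sigma_{F_P}, a\rangle + |M|\deg D$.
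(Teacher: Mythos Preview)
Your approach is exactly what the paper intends: it presents this corollary simply as a ``reformulation'' of Corollary~\ref{cor_vanishing_sections_of_lowest_slope_part}, and you have supplied precisely the missing translation---checking $\deg \cV_g = 0$, verifying the small-height hypothesis for Theorem~\ref{thm_filtration_of_associated_bundle}, and matching the weight-indexed pieces to the Harder--Narasimhan data. One small slip: in your part~3 the intermediate expression ``$\chi(\cV_g(D)) = \dim V + \deg(\cV_g(D))$'' drops the factor $(1-g_X)$ from Riemann--Roch, though your final answer $\dim V(1 - g_X + \deg D)$ is correct. It is also worth noting that your part~1 bound $(\dim V - |M|)(1+\deg D) - \sum_{a\in M}\langle\sigma_{F_P},a\rangle$ is actually \emph{sharper} than the stated exponent (they differ by a factor of $|M|$ on the last sum), and it is your sharper version that the paper in fact invokes later when deriving the estimate $|H^0(X,\cV_g(D))|/|H^0(X,B_D)| = O(q^{-|M|\deg D - \langle\sigma,\sum_{a\in M}a\rangle})$ in the proof of Theorem~\ref{thm_average_of_non_trivial_selmer}.
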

We can combine these ideas with Lemma \ref{lem_reducible_orbits} to obtain the following useful principle:
\begin{corollary}\label{cor_parabolic_reducibility_from_canonical_reduction}
Let $P \subset G$ be a standard parabolic subgroup, and suppose that $\dim Z_0(L_P) \leq 2$. Let $D$ be a divisor on $X$, and let $g \in \cY_{G, P}(D)^{<0}$. Then for all $v \in H^0(X, \cV_g(D))$, we have $\Delta(v) = 0$ (as a section of $H^0(X, \cO_X(24D))$).
\end{corollary}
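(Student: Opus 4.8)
The plan is to combine the explicit form of the Harder--Narasimhan filtration of $\cV_g(D)$ (Theorem \ref{thm_filtration_of_associated_bundle}) with the reducibility criterion of Lemma \ref{lem_reducible_orbits}. Write $\sigma = \sigma_{F_P}$ for the slope of the canonical reduction. By Theorem \ref{thm_filtration_of_associated_bundle}, the HN filtration of $\cV_g(D)$ has constituents $(F_P \times_P V_q) \otimes \cO_X(D)$, with $q$-th graded piece semistable of slope $q + \deg D$, where $V_q = \bigoplus_{\langle \sigma, \lambda \rangle \geq q} V_\lambda$. Let $M = \{ a \in \Phi_V : \langle \sigma, a \rangle + \deg D < 0 \}$; this is non-empty because $g \in \cY_{G,P}(D)^{<0}$. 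Put $V_M = \{ w \in V : w_a = 0 \text{ for all } a \in M \}$, a $P$-stable subspace; one checks $V_M = V_{q^\ast}$ for $q^\ast$ the least value of $\langle \sigma, \lambda \rangle$ (over weights $\lambda$ of $V$) with $q^\ast + \deg D \geq 0$, so that $\cW := (F_P \times_P V_M) \otimes \cO_X(D)$ is exactly the sub-bundle of $\cV_g(D)$ assembled from the HN pieces of non-negative slope, and $\cV_g(D) / \cW$ has all HN slopes negative. Arguing as in the proof of Corollary \ref{cor_vanishing_sections_of_lowest_slope_part} --- via Lemma \ref{lem_cliffords_theorem}(1) and the exact sequences relating the graded pieces --- we obtain $H^0(X, \cV_g(D)/\cW) = 0$, so every $v \in H^0(X, \cV_g(D))$ is a section of $\cW$.

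The next step is to pin down $M$ using canonicity of $F_P$. The Borel $B$ corresponds to the root basis $R^- = -R = \{ -a_1, -a_2, -a_3, -a_4 \}$, so $P$ is obtained from $B$ by deleting the simple roots $\{ -a_i : i \in I \}$ for some $I \subseteq \{1,2,3,4\}$, and $\dim Z_0(L_P) = |I| \leq 2$ by hypothesis. (If $I = \varnothing$ then $P = G$, $\sigma = 0$, and $\cV_g(D)$ is semistable of slope $\deg D > 0$, so $\cY_{G,G}(D)^{<0} = \varnothing$ and the statement is vacuous.) Since $\sigma \in X_\ast(Z_0(L_P))_\bbQ = \bigoplus_{i \in I} \bbQ \cdot n_i$ and canonicity forces $\langle \sigma, -a_i \rangle > 0$ for $i \in I$, we may write $\sigma = -\sum_{i \in I} c_i n_i$ with every $c_i > 0$. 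For a weight $a = \tfrac12 \sum_{l=1}^{4} \epsilon_l(a) a_l \in \Phi_V$ (with $\epsilon_l(a) = 2 n_l(a) \in \{ \pm 1 \}$) we then get $\langle \sigma, a \rangle = -\tfrac12 \sum_{i \in I} c_i \epsilon_i(a)$, which, since $c_i > 0$, is most negative exactly for those weights with $\epsilon_i(a) = +1$ for every $i \in I$. As $M \neq \varnothing$, this minimal value is $< -\deg D$, and hence $M$ contains every weight $a$ with $n_i(a) > 0$ for all $i \in I$. Choosing a two-element set $S$ with $I \subseteq S \subseteq \{1,2,3,4\}$, we conclude $M \supseteq \{ a \in \Phi_V : n_i(a) > 0 \text{ for all } i \in S \}$.

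By Lemma \ref{lem_reducible_orbits}(1), applied with this two-element set $S$, the discriminant $\Delta$ vanishes identically on the subspace $\{ w \in V : w_a = 0 \text{ whenever } n_i(a) > 0 \text{ for all } i \in S \}$; since $S \subseteq M$ this subspace contains $V_M$, so $\Delta|_{V_M} \equiv 0$. Because $\Delta$ is $G$-invariant, $\Delta|_{V_M}$ is in particular $P$-invariant and therefore induces a morphism $(F_P \times_P V_M) \otimes \cO_X(D) \to \cO_X(24D)$, necessarily the zero morphism; hence $\Delta(v) = 0$ for every $v \in H^0(X, \cW) = H^0(X, \cV_g(D))$, as desired. (Equivalently: $F_P$, being a reduction of the rationally trivial torsor $F_g$, is itself rationally trivial, so one may trivialize it over $K$, note that $v$ then lands in $V_M \otimes K$ at the generic point, invoke Lemma \ref{lem_reducible_orbits}(1) there, and deduce that the section $\Delta(v)$ of $\cO_X(24D)$ vanishes since it vanishes generically.)

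I expect the only delicate points to be the bookkeeping with sign conventions --- the Borel $B$ corresponds to $R^-$, so the deleted simple roots are the $-a_i$ and $\sigma$ sits in the cone spanned by the $-n_i$ --- and the verification that sections of $\cV_g(D)$ really do live in $\cW$. The substantive content is the combinatorial fact that for $|I| \leq 2$ the set $M$ must contain one of the parabolic vanishing loci of Lemma \ref{lem_reducible_orbits}(1); this works out cleanly because those loci are precisely the weight sets defined by imposing $n_i > 0$ on a two-element index set, and the hypothesis $\dim Z_0(L_P) \leq 2$ is exactly what guarantees that the index set $I$ fits inside one.
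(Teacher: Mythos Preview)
Your proof is correct and follows essentially the same approach as the paper's own argument: both use the Harder--Narasimhan filtration of $\cV_g(D)$ to force any global section into the subspace $V_M$ where the weights with most negative slope vanish, then observe that when $\dim Z_0(L_P)\le 2$ this subspace is contained in one of the loci of Lemma~\ref{lem_reducible_orbits}(1). The paper compresses this into a single sentence (``the lowest slope piece has dimension at least $4$''), whereas you spell out the combinatorics of $\sigma=-\sum_{i\in I}c_i n_i$ and the rational-trivialization step explicitly; your added care about passing between the $g$-trivialization and the $P$-trivialization via $G(K)$-invariance of $\Delta$ is a point the paper leaves implicit.
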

\begin{proof}
Let $D = \sum_v n_v \cdot v$. If $P \subset G$ satisfies $\dim Z_0(L_P) \leq 2$, then the lowest slope piece of the Harder--Narasimhan filtration of $\cV_g$ has dimension at least 4. (It's helpful to recall here that $G$ is isogenous to $\SL_2^4$, and $V$ is then identified with the tensor product of the four 2-dimensional standard representations.) Under the identification
\[ H^0(X, \cV_g(D)) = V(K) \cap \prod_v \varpi_v^{-n_v} g_v V(\cO_{K_v}) \subset V(K), \]
we see that any $v \in H^0(X, \cV_g(D))$ must satisfy the condition of the first part of Lemma   \ref{lem_reducible_orbits}, and therefore satisfy $\Delta(v) = 0$.
\end{proof}
\section{Counting 2-Selmer elements}\label{sec_counting_2_selmer}

In this section, we describe the relation between the representation $(G, V)$ of \S \ref{sec_invariant_theory} and the family of pointed elliptic curves $(E, P, Q)$ described in \S \ref{sec_elliptic_curves_with_marked_points}. We proceed from the rational theory, to the integral theory, and finally combine this with the other results established so far to prove our main theorems (Theorem \ref{thm_average_of_non_trivial_selmer} and Theorem \ref{thm_equidistribution_of_non_trivial_selmer} below). 

We assume throughout \S \ref{sec_counting_2_selmer} that $\bbF_q$ is a finite field of characteristic $\geq 19$, and let $(G, V)$ denote the representation considered in \S \ref{sec_invariant_theory}.

\subsection{$(G, V)$ and 2-descent}

\begin{theorem}
We can find homogeneous generators $p_2, p_4, q_4, p_6 \in \bbF_q[V]^G$ (of degrees 2, 4, 4, and 6, respectively) and a 5-dimensional affine linear subspace $\Sigma \subset V$ together with functions $x, y \in \bbF_q[\Sigma]$ such that:
\begin{enumerate}
\item The functions $p_2, p_4, q_4, x, y \in \bbF_q[\Sigma]$ generate $\bbF_q[\Sigma]$.
\item The relation $y( xy + 2 q_4 ) = x^3 + p_2 x^2  + p_4 x  + p_6$ holds on $\Sigma$.
\end{enumerate}
\end{theorem}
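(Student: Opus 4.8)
The plan is to exhibit the subspace $\Sigma$ explicitly as a Kostant-type slice adapted to the partial order on $\Phi_V$, and then recognize the universal relation among sections of a degree-3 divisor on the associated curve. Recall from Lemma~\ref{lem_trivial_orbits} that for $S = \{a_1+a_2+a_3+a_4,\,a_1-a_2+a_3+a_4,\,a_1+a_2-a_3+a_4,\,a_1+a_2+a_3-a_4\}$ one has $\lambda(S) = \{\alpha_1,\alpha_2,\alpha_3,\alpha_4\}$, where $\alpha_1,\dots,\alpha_4$ are the simple roots of $H$, and $\alpha_0 = a b$ is the highest weight of $V$ (weight number 1 in the figure). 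First I would take $\Sigma$ to be the affine linear subspace $E' + V_{S}$, where $E' = X_{\alpha_1}+X_{\alpha_2}+X_{\alpha_3}+X_{\alpha_4}$ is the nilpotent $E$ from Proposition~\ref{prop_existence_of_kostant_section} (so that the five ``free'' coordinates are the $\alpha_0$-coordinate together with the four coordinates indexed by weights $6,7,8$ and one more from $\Phi_V^-$ — i.e. the complement of $S$ inside an appropriate affine chart has dimension $16-4-7=5$); more precisely $\Sigma$ should be cut out by setting to $1$ the coordinates along $\lambda(S)$ and to $0$ the coordinates along a carefully chosen $11$-element subset of the remaining weights, leaving exactly $5$ free parameters. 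The key point, to be checked by the same argument as in Lemma~\ref{lem_trivial_orbits}, is that the product map $(U^-)^\theta \times \kappa \to E + \Lie U^-$ restricts to an isomorphism onto a neighbourhood identifying $\Sigma$ with a transversal to the $G$-orbits, so that $\pi|_\Sigma : \Sigma \to B$ is finite flat of the expected degree and the four generators $p_2,p_4,q_4,p_6$ of $\bbF_q[V]^G$ restrict to $\Sigma$.

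Next I would set up the curve directly from the geometry that motivates the whole paper: the family \eqref{eqn_intro_generalized_weierstrass_equation} is the miniversal deformation of the $D_4$ simple singularity, and $\Sigma$ is precisely the corresponding $5$-dimensional slice through the subregular nilpotent of $\frh$ (via Slodowy's description of simple singularities as slices to subregular orbits). Concretely, I would choose two of the five coordinate functions on $\Sigma$ — call them $x$ and $y$, built from the coordinates along $\alpha_0$ and along one of the weights $6,7,8$ — so that the defining equation of the intersection of $\Sigma$ with the nilpotent cone $\mathcal N \cap V$ becomes the $D_4$ equation $y(xy) = x^3$ up to coordinates, and then the deformation directions supply the coefficients $p_2, p_4, q_4$ (and the deformation of the constant term supplies $p_6$). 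The first assertion, that $p_2,p_4,q_4,x,y$ generate $\bbF_q[\Sigma] \cong \bbF_q[\A^5]$, then amounts to checking that these five functions form a coordinate system on $\Sigma$, which is a linear-algebra computation with the explicit weight data above; and the second assertion is the explicit form of Slodowy's relation, which I would verify by a direct substitution using the matrix $E$ and the cocharacter $\check\rho$ already written down before Proposition~\ref{prop_existence_of_kostant_section}.

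An alternative, more hands-on route — and probably the one I would actually carry out to nail down the explicit polynomials — is to work on $\Sigma$ inside the $8\times 8$ matrices: parametrize a general element $v(x,y,p_2,p_4,q_4) \in \Sigma$ as $E + (\text{lower-triangular corrections in } V_S)$, compute its characteristic polynomial (or rather the four invariants $p_2,p_4,q_4,p_6$ which for $\frh$ of type $D_4$ are coefficients of the characteristic polynomial of the $8$-dimensional representation together with the Pfaffian), and match these against the discriminant setup of \S\ref{sec_elliptic_curves_with_marked_points}, where $\Delta(p_2,p_4,q_4,p_6) = \disc(x^4 + p_2 x^2 + p_4 x + p_6 + q_4^2)$. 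Since Proposition~\ref{prop_invariant_theory_of_theta_group}(1) already tells us $\bbF_q[V]^G$ is polynomial on generators of degrees $2,4,4,6$ and the restriction to a Kostant-type section is an isomorphism onto $B$, once I have \emph{any} relation of the stated shape holding on $\Sigma$ with $p_2,p_4,q_4$ of the right degrees, homogeneity under the $\bbG_m$-action (which scales $x$ with weight $1$, $y$ with weight $1$, $p_2$ with weight $2$, $p_4,q_4$ with weight $4$? — here one must instead use the grading where $\deg p_2 = 2$, etc., forcing $\deg x = 2$, $\deg y = 3$ as in Lemma~\ref{lem_equations_of_pointed_curves}) pins down $p_6$ as a polynomial in the others and forces the relation to be exactly the displayed generalized Weierstrass equation.

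\textbf{Main obstacle.} The genuinely delicate step is the first one: producing the \emph{correct} $5$-dimensional affine slice $\Sigma$ and the \emph{correct} coordinate functions $x,y$ so that the $D_4$ relation comes out on the nose with the right coefficients, rather than merely up to an unspecified change of coordinates. This requires identifying the subregular nilpotent orbit of $\frh^{d\theta=-1}$ meeting $V$, checking (using Proposition~\ref{prop_invariant_theory_of_theta_group}(3) and the characteristic assumption $\cha\bbF_q \geq 23$ needed for Jacobson--Morozov) that the relevant $\frs\frl_2$-theory works over $\bbF_q$, and then doing the matrix bookkeeping to read off $p_2,p_4,q_4,p_6$ and $x,y$ explicitly — exactly the kind of computation Slodowy carries out for the $ADE$ singularities, here adapted to the $\theta$-twisted setting of Vinberg theory. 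Everything after that — the generation statement and the verification of the relation — is a finite explicit check.
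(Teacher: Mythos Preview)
Your second paragraph and your ``main obstacle'' paragraph have the right idea: $\Sigma$ is the Slodowy slice through a \emph{subregular} nilpotent, intersected with $V$. But your explicit constructions in the first and third paragraphs use the \emph{regular} nilpotent $E = X_{\alpha_1}+X_{\alpha_2}+X_{\alpha_3}+X_{\alpha_4}$ from Proposition~\ref{prop_existence_of_kostant_section} and its cocharacter $\check\rho$, and that cannot work. The slice at the regular nilpotent is precisely the $4$-dimensional Kostant section $\kappa = E + \frz_\frh(F)$, which maps isomorphically to $B$; you will never get a $5$-dimensional transversal out of it, and no amount of ``setting coordinates along $\lambda(S)$ to $1$'' fixes this --- your dimension count $16-4-7=5$ does not correspond to any actual construction, and the later count with $11$ vanishing coordinates is inconsistent with it.

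What the paper does is exactly what you gesture at but never implement: it writes down a specific \emph{subregular} nilpotent $e \in V$ (an explicit $8\times 8$ matrix, distinct from $E$) together with a cocharacter $\check\lambda \neq \check\rho$ satisfying $\Ad\check\lambda(t)(e)=te$, completes to a normal $\frs\frl_2$-triple $(e,\,d\check\lambda(2),\,f)$ in $V$, and sets $\Sigma = e + \frz_\frh(f)^{d\theta=-1}$. The contracting $\bbG_m$-action is then $t\cdot v = t\,\Ad\check\lambda(t^{-1})(v)$, not the one built from $\check\rho$, and the functions $x,y$ are chosen to have weight $2$ for \emph{this} action (your attempt to sort out the weights using $\check\rho$ is what led you into the ``$\deg x=2,\ \deg y=3$?'' muddle). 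Once you have the correct $e$ and $\check\lambda$, the rest is indeed the finite Slodowy-type computation you describe, and the characteristic hypothesis $\cha\bbF_q\geq 23$ is what makes the $\frs\frl_2$-triple machinery go through over $\bbF_q$.
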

\begin{proof}
This theorem follows from \cite[Theorem 3.8]{Tho13} when $\bbF_q$ is replaced by a field of characteristic 0. The same proof works over $\bbF_q$, with our restrictions on the characteristic. This is unsurprising, given that the results of Slodowy \cite{Slo80} are proved in positive characteristic with the same restrictions. We explain the construction. Define a matrix
\[ e = \left(
\begin{array}{cccccccc}
 0 & 1 & 0 & 0 & 0 & 1 & 0 & 0 \\
 0 & 0 & 0 & 0 & 1 & 0 & 0 & 2 \\
 0 & 0 & 0 & -1 & 0 & 0 & 0 & 0 \\
 0 & 0 & 0 & 0 & 0 & 0 & 0 & 0 \\
 0 & 0 & -2 & 0 & 0 & 0 & 1 & 0 \\
 0 & 0 & 0 & 0 & 1 & 0 & 0 & -1 \\
 0 & 0 & 0 & -1 & 0 & 0 & 0 & 0 \\
 0 & 0 & -1 & 0 & 0 & 0 & -1 & 0 \\
\end{array}
\right) \]
and a cocharacter $\check{\lambda} \in X_\ast(T')$
\[ \check{\lambda}(t) = \diag(t^2, t, t^{-1}, t^{-2}, 1, t, t^{-1}, 1). \]
Then $\Ad \check{\lambda}(t)(e)= t e$ and $e \in V$ is a subregular nilpotent element. Therefore we can find a unique subregular nilpotent $f \in V$ such that the triple $(e, d \check{\lambda}(2), f)$ is a normal $\frs\frl_2$-triple. We define $\Sigma = e + \frz_\frh(f)^{d \theta = -1}$. 

If $t \in \bbG_m$, then the action $t \cdot v = t \Ad \check{\lambda}(t^{-1})(v)$ leaves $\Sigma$ invariant and contracts $\Sigma$ to the fixed base point $e$; moreover, the morphism $\pi|_\Sigma$ is then $\bbG_m$-equivariant. The functions $x, y \in \bbF_q[\Sigma]$ are chosen to have weight 2 with respect to this action.
\end{proof}
At this point there are two natural discriminant polynomials $\Delta$ in $\bbF_q[V]^G$ that one might consider; the one arising from the usual Lie algebra discriminant in $\frh$, and the discriminant of the polynomial $f(t) = t^4 + p_2 t^3 + p_4 t^2 + p_6 t + q_4^2$, which is used in \S \ref{sec_elliptic_curves_with_marked_points}. In fact, these two functions are equal up to $\bbF_q^\times$-multiple, because they both cut out the same irreducible divisor in $B = \Spec \bbF_q[V]^G$. Since the precise value of $\Delta$ will not be important for us, but rather only its order of vanishing, we will use the symbol $\Delta$ to denote either one of these polynomials in $\bbF_q[V]^G = \bbF_q[p_2, p_4, q_4, p_6]$.

We write $S \to B$ for the natural compactification of $\Sigma$ as a family of projective plane curves given by the equation
\begin{equation}
Y( XY + 2 q_4 Z^2 ) = X^3 + p_2 X^2  Z + p_4 X Z^2 + p_6Z^3.
\end{equation}
We write $O$, $P$, and $Q$ for the three sections of $S - \Sigma$ at infinity given respectively by $[0 : 1 :0 ]$, $[-1: 1 : 0]$ and $[1 : 1 : 0]$. We write $S^\text{rs}$ for the restriction of this family to $B^\text{rs}$. The fundamental relation between the pair $(G, V)$ and this family of curves is as follows:
\begin{theorem}\label{thm_identification_of_stabilizer_and_2-torsion}
\begin{enumerate}
\item The morphism $S \to B$ is smooth exactly above $B^\text{rs}$. Consequently, $S^\text{rs} \to B^\text{rs}$ is a family of smooth, projective, geometrically connected curves. 
\item Let $J_{S^\text{rs}} = \Pic^0_{S^\text{rs} / B^\text{rs}}$ denote the (relative) Jacobian of this family, and let $Z^\text{rs}$ denote the equalizer of the diagram
\[ \xymatrix{ G \times \kappa^\text{rs} \ar@<.7ex>[rr]^(.55){(g, x) \mapsto g \cdot x} \ar@<-.7ex>[rr]_(.55){(g, x) \mapsto x} & & V^\text{rs} }\]
viewed as a finite \'etale group scheme over $\kappa^\text{rs} \cong B^\text{rs}$. Then there is a canonical isomorphism $J_{S^\text{rs}}[2] \cong Z^\text{rs}$ of finite \'etale group schemes over $B^\text{rs}$.
\item Let $k / \bbF_q$ be a field, and let $b \in B^\text{rs}(k)$. Consider the diagram
\[ \xymatrix{ \Sigma_b(k) \ar[r] \ar[d] & G(k) \backslash V_b(k) \ar[d] \\
J_b(k) \ar[r] & H^1(k, J_b[2]), } \]
where the top arrow is the canonical inclusion; the left arrow is the map $R \mapsto [(R) - (O)]$; the right arrow is the injection of Corollary \ref{cor_parameterization_of_orbits}, composed with the isomorphism $H^1(k, Z_G(\kappa_b)) \cong H^1(k, J_b[2]$); and the bottom arrow is the canonical 2-descent map on the Jacobian $J_b$. Then there exists a class $x_b \in H^1(k, J_b[2])$ arising from a trivial orbit such that this diagram commutes up to addition of $x_b$. 
\end{enumerate}
\end{theorem}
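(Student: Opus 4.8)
The plan is to identify both composites in the square with classifying maps for torsors under $J_b[2]$, reduce the stated identity to an isomorphism of torsors over the universal family, and prove that by comparing monodromy on a geometric fibre. Concretely, by Corollary~\ref{cor_parameterization_of_orbits} together with part~(2) of the theorem, the clockwise composite $\phi_1\colon\Sigma_b(k)\hookrightarrow V_b(k)\to G(k)\backslash V_b(k)\to H^1(k,J_b[2])$ sends $R$ to the class of the $Z_G(\kappa_b)$-torsor $T_R=\{g\in G_k\mid g\cdot\kappa_b=R\}$, transported via the isomorphism $Z_G(\kappa_b)\cong J_b[2]$ of part~(2). The counter-clockwise composite sends $R$ to the $2$-descent class $\delta([(R)-(O)])$, which by the Kummer sequence is the class of the $J_b[2]$-torsor $[2]^{-1}([(R)-(O)])\subset J_b$. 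So the assertion is exactly that the two $J_b[2]$-torsors $T_R$ and $[2]^{-1}([(R)-(O)])$ differ by a class $x_b\in H^1(k,J_b[2])$ that is independent of $R$ and is the image of a trivial $G(k)$-orbit.

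All of this is algebraic and functorial in $(k,b)$, so it is enough to prove a statement over $B^{\mathrm{rs}}$. Let
\[ \mathcal T=\{(g,R)\in G\times\Sigma^{\mathrm{rs}}\mid g\cdot\kappa_{\pi(R)}=R\},\qquad \mathcal D=\bigl(R\mapsto[(R)-(O)]\bigr)^\ast\bigl([2]\colon J_{S^{\mathrm{rs}}}\to J_{S^{\mathrm{rs}}}\bigr), \]
viewing $\mathcal T$ as a $J_{S^{\mathrm{rs}}}[2]$-torsor over $\Sigma^{\mathrm{rs}}$ via part~(2); it suffices to produce an isomorphism $\mathcal T\cong\mathcal D\otimes\pi^\ast\mathcal X$ of $J_{S^{\mathrm{rs}}}[2]$-torsors over $\Sigma^{\mathrm{rs}}$, for some $J_{S^{\mathrm{rs}}}[2]$-torsor $\mathcal X$ on $B^{\mathrm{rs}}$ restricting at each point to a trivial-orbit class, since specialising along a $k$-point $(b,R)$ of $\Sigma^{\mathrm{rs}}$ then yields the claim with $x_b=[\mathcal X_b]$. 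Because $\pi\colon\Sigma^{\mathrm{rs}}\to B^{\mathrm{rs}}$ has geometrically connected fibres, a routine descent argument reduces the existence of such an $\mathcal X$ to the single geometric statement that $\mathcal T$ and $\mathcal D$ have isomorphic restrictions to the geometric generic fibre $\Sigma_{\bar b}$ of $\pi$.

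This geometric comparison is the heart of the matter. The fibre $\Sigma_{\bar b}$ is the complement of the three sections $\{O,P,Q\}$ in the smooth projective genus-one curve $S_{\bar b}$, and $\mathcal T|_{\Sigma_{\bar b}}$ and $\mathcal D|_{\Sigma_{\bar b}}$ are finite étale torsors under $J_{\bar b}[2]\cong(\mathbb Z/2)^2$, hence determined by their monodromy representations of $\pi_1^{\mathrm{et}}(\Sigma_{\bar b})$. Since $\mathcal D|_{\Sigma_{\bar b}}$ is the restriction of the everywhere-étale multiplication map $[2]\colon S_{\bar b}\to S_{\bar b}$, its monodromy is the composite $\pi_1^{\mathrm{et}}(\Sigma_{\bar b})\twoheadrightarrow\pi_1^{\mathrm{et}}(S_{\bar b})\to J_{\bar b}[2]$ given by reduction modulo $2$; the monodromy of $\mathcal T|_{\Sigma_{\bar b}}$ is the one attached to the covering $G\to G/Z_G(\kappa_{\bar b})\cong V^{\mathrm{rs}}_{\bar b}$, restricted along $\Sigma_{\bar b}\hookrightarrow V^{\mathrm{rs}}_{\bar b}$. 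Matching these two homomorphisms requires opening up the construction of the isomorphism $J_{S^{\mathrm{rs}}}[2]\cong Z^{\mathrm{rs}}$ of part~(2): it originates in the relation between the miniversal deformation $\Sigma\to B$ of the $D_4$-singularity and its simultaneous resolution, realising $J_b[2]$ through the mod-$2$ vanishing cohomology of the family near the singular fibre, and it is precisely this dictionary that translates the monodromy of $\mathcal T|_{\Sigma_{\bar b}}$ into the mod-$2$ monodromy of the curve. I expect this to be the main obstacle: everything comes down to making the part~(2) isomorphism explicit enough to compare the two $(\mathbb Z/2)^2$-covers of $\Sigma_{\bar b}$. (A more hands-on alternative would be to write out $\kappa_b$ and the $G$-action in the explicit coordinates of \S\ref{sec_preliminaries} and compare the resulting cocycle directly with the descent cocycle of $[(R)-(O)]$; the geometric route seems cleaner.)

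It remains to see that $\mathcal X$ restricts to a trivial-orbit class. Given the isomorphism $\mathcal T\cong\mathcal D\otimes\pi^\ast\mathcal X$, the fibre $x_b=[\mathcal X_b]$ equals $\phi_1(R)-\delta([(R)-(O)])$ for any $R\in\Sigma_b(k)$, and by the dictionary of the previous paragraph this is of the form $\delta([D_b])$ for a divisor $D_b$ supported on $\{O,P,Q\}$ that records the relative position of the base point $\kappa_b$ — built from the Borel subgroup $B^-$ and the regular nilpotent $E$ — with respect to the marked sections. By part~(2) and Lemma~\ref{lem_trivial_orbits}, any such class is the image of one of the trivial orbits $w\cdot\kappa_b$ with $w\in W_0$, which is what was required. (If $\Sigma_b(k)=\emptyset$ the diagram has nothing to commute and one may simply take $x_b=0$, the class of the trivial orbit $\kappa_b$.)
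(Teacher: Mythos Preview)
Your proposal is a plan rather than a proof: the central step is explicitly left open. You correctly reduce part~(3) to comparing the two $J_b[2]$-torsors $\mathcal T$ and $\mathcal D$ over $\Sigma^{\mathrm{rs}}$, and then to a monodromy comparison on a geometric fibre, but you then write ``Matching these two homomorphisms requires opening up the construction of the isomorphism $J_{S^{\mathrm{rs}}}[2]\cong Z^{\mathrm{rs}}$ of part~(2)\dots I expect this to be the main obstacle.'' That \emph{is} the content of the theorem, and you have not supplied it. Without an explicit identification of how the isomorphism of part~(2) intertwines the monodromy of the $G$-orbit cover with the mod-$2$ Abel--Jacobi monodromy, nothing has been proved. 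Your final paragraph compounds this: the claim that a class $\delta([D_b])$ with $D_b$ supported on $\{O,P,Q\}$ is automatically a trivial-orbit class is exactly the second part of Theorem~\ref{thm_rational_orbits_and_rational_points}, which in the paper is \emph{deduced from} the present theorem; invoking it here is circular.

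The paper's own argument is of a quite different nature and does not attempt a self-contained proof in positive characteristic. All three parts are first established over a field of characteristic~$0$ in \cite{Tho13} (Corollary~3.16, Corollary~4.12, Theorem~4.15 respectively), using Slodowy's theory of simultaneous resolutions and, for part~(2), transcendental methods. The passage to characteristic $p>19$ is then handled separately: parts~(1) and~(3) by observing that the relevant arguments from \cite{Slo80} and \cite[\S4]{Tho13} go through verbatim under the standing hypothesis on the characteristic, and part~(2) by noting that the construction still yields a map $J_{S^{\mathrm{rs}}}[2]\to Z^{\mathrm{rs}}$ of \'etale $\bbF_2$-local systems, whose bijectivity can be checked on a single stalk after lifting to characteristic~$0$. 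The recipe for $x_b$ is also made concrete there: one picks a smooth point $E'$ on the branch of the central singular fibre $S_0$ through $O$, finds the unique $w\in W_0$ with $wE'$ in the $G(\overline{\bbF}_q)$-orbit of $\kappa_0$, and then $x_b$ is the class of $w\kappa_b$. If you want to pursue your torsor-comparison strategy as an alternative, you will need to actually carry out the vanishing-cycle computation you allude to---this is essentially what \cite[\S4]{Tho13} does---rather than defer it.
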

\begin{proof}
The first part is established over a field of characteristic 0 in \cite[Corollary 3.16]{Tho13}, using a reduction to \cite{Slo80}, and again the same proof works in our positive characteristic setting. This is not the case for the second part, where the corresponding fact is established in \cite[Corollary 4.12]{Tho13} using analytic techniques. However, the same construction works to show that there is a map $J_{S^\text{rs}}[2] \to Z^\text{rs}$ of local systems of $\bbF_2$-vector spaces on $B^\text{rs}$. To check that it is an isomorphism, it suffices to check that it is an isomorphism on a single stalk, and this can easily be accomplished by lifting to characteristic 0 and applying \cite[Corollary 4.12]{Tho13}.

The third part has been established in characteristic 0 in \cite[Theorem 4.15]{Tho13}, which also shows how to calculate the element $x_b$ using the geometry of the curve $S$. We describe the recipe, although it is not strictly necessary for what we do here. Let $0 \in B(\bbF_q)$ be the central point. Then the curve $S_0$ is a union of three lines. Let $S_0' \subset S_0$ be the branch containing the section $O$ at infinity, and let $E' \in S'_0(\bbF_q) - \{ e \}$ be a rational point. Then there exists a unique $w \in W_0$ such that $w E'$ is conjugate by $G(\overline{\bbF}_q)$ to $\kappa_0$, and for any $b \in B(k)$ we can then take $x_b$ to be the class corresponding to the orbit of $w \kappa_b \in V(k)$.

We still need to extend this result to positive characteristic. However, this is an essentially formal consequence of the first two parts of the theorem, and follows in  exactly the same way as in \cite[\S 4]{Tho13}.
\end{proof}

\begin{theorem}\label{thm_rational_orbits_and_rational_points}
Let $k / \bbF_q$ be a field, and let $b \in B^\text{rs}(k)$. 
\begin{enumerate}
\item The image of the injective map $G(k) \backslash V_b(k) \to H^1(k, J_b[2])$ appearing in Theorem \ref{thm_identification_of_stabilizer_and_2-torsion} contains the canonical image of $J_b(k) / 2 J_b(k)$. 
\item Inside this image, the trivial orbits of $G(k) \backslash V_b(k)$ correspond to the subgroup of $J_b(k) / 2 J_b(k)$ generated by the divisor classes $[(P) - (O)]$ and $[(Q) - (O)]$.
\end{enumerate}
\end{theorem}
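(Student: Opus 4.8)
The plan is to exploit two structures at once: the commutative diagram of Theorem~\ref{thm_identification_of_stabilizer_and_2-torsion}(3), and the natural action of the component group $W_0 \cong H^\theta/G$ on the orbit set $G(k)\backslash V_b(k)$. Write $\phi : G(k)\backslash V_b(k)\hookrightarrow H^1(k,J_b[2])$ for the injection of that theorem (Corollary~\ref{cor_parameterization_of_orbits} composed with the isomorphism $H^1(k,Z_G(\kappa_b))\cong H^1(k,J_b[2])$ coming from part~(2), noting $Z_G(\kappa_b)\cong J_b[2]$), and $\delta:J_b(k)/2J_b(k)\hookrightarrow H^1(k,J_b[2])$ for the $2$-descent injection; since $\kappa_b$ is the marked base point, $\phi(\overline{\kappa_b})=0$. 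Because $G=(H^\theta)^\circ$ is normal in $H^\theta$, the action of $H^\theta(k)$ on $V_b(k)$ descends to an action of $W_0$ on $G(k)\backslash V_b(k)$, whose orbit through the base point is exactly the set of trivial orbits $\{\overline{w\cdot\kappa_b}:w\in W_0\}$. \emph{Step 1: the $W_0$-action is intertwined with translation.} Lifting $\sigma,\tau$ to $\bbF_q$-points of $N_H(T)\cap H^\theta$ and tracking cocycles — using that these lifts are $k$-rational, so that conjugating a cocycle representing $\phi(\overline v)$ by them is a $\Gamma_k$-equivariant operation, and that the induced automorphisms of $Z_G(\kappa_b)\cong J_b[2]$ act trivially on $H^1$ — one obtains $\phi(w\cdot\overline v)=\phi(\overline v)+\phi(\overline{w\cdot\kappa_b})$ for all $v\in V_b(k)$, $w\in W_0$. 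Hence $A_b:=\{\phi(\overline{w\cdot\kappa_b}):w\in W_0\}$ is the image of a homomorphism $W_0\to H^1(k,J_b[2])$, so is a subgroup; it lies in $\im\phi$ (translate the base point); $\im\phi$ is stable under translation by $A_b$; and the image of the set of trivial orbits under $\phi$ is exactly $A_b$.

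\emph{Step 2: identification of $A_b$, giving the second assertion.} Here I invoke the explicit recipe for $x_b$ from the proof of Theorem~\ref{thm_identification_of_stabilizer_and_2-torsion}(3): the central fibre $S_0$ is a union of three lines, met at infinity by the three sections $O$, $P$, $Q$, and the non-identity elements of $W_0$ are distinguished by which branch of $S_0$ contains the chosen rational point. Running that recipe with the branch through $O$ (resp.\ $P$, resp.\ $Q$) as base point expresses the composite $W_0\to A_b\hookrightarrow H^1(k,J_b[2])$ as $\delta$ applied to the homomorphism $W_0\to J_b(k)/2J_b(k)$ recording the corresponding difference of marked points; since the non-trivial differences are $[(P)-(O)]$, $[(Q)-(O)]$ and $[(Q)-(P)]=[(Q)-(O)]-[(P)-(O)]$, the image of that homomorphism is precisely the subgroup generated by $[(P)-(O)]$ and $[(Q)-(O)]$. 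Therefore $A_b=\delta(\langle[(P)-(O)],[(Q)-(O)]\rangle)$, and since $\phi$ and $\delta$ are injective and the trivial orbits have $\phi$-image $A_b$, the trivial orbits correspond exactly to this subgroup of $J_b(k)/2J_b(k)$.

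\emph{Step 3: the first assertion.} Since $S_b$ is a smooth genus-$1$ curve with the rational point $O$, the Abel--Jacobi morphism $R\mapsto[(R)-(O)]$ identifies $S_b$ with $J_b$, so every $c\in J_b(k)$ equals $[(R)-(O)]$ for a unique $R\in S_b(k)$, with $R\in\Sigma_b(k)$ unless $R\in\{O,P,Q\}$. If $R\in\Sigma_b(k)$, Theorem~\ref{thm_identification_of_stabilizer_and_2-torsion}(3) gives $\phi(\overline R)=\delta(\overline c)+x_b$, and since $x_b\in A_b$ and $\im\phi$ is $A_b$-stable we conclude $\delta(\overline c)=\phi(\overline R)-x_b\in\im\phi$. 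If $R\in\{O,P,Q\}$, then $\delta(\overline c)\in\{0\}\cup A_b\subseteq\im\phi$ directly. Hence $\delta(J_b(k)/2J_b(k))\subseteq\im\phi$, which is the first assertion.

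\emph{Main obstacle.} The substantive content lies in Step~1 — checking that the $W_0$-action becomes honest translation, not translation twisted by a non-trivial automorphism, on $H^1(k,J_b[2])$, which is exactly where the rationality of the representatives $\sigma,\tau$ and the commutativity of $J_b[2]$ enter — and in Step~2, extracting from the somewhat implicit recipe in the proof of Theorem~\ref{thm_identification_of_stabilizer_and_2-torsion}(3) the precise dictionary between the three non-identity elements of $W_0$ and the three differences of marked points. Both are carried out in characteristic $0$ in \cite{Tho13}; as with the other theorems of this section, the argument and its appeal to \cite{Slo80} are insensitive to the characteristic under the standing hypothesis $\cha\bbF_q\geq 19$, so one may alternatively cite \cite{Tho13} and note that the proof goes through verbatim.
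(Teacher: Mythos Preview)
Your overall strategy is sound and your conclusions are correct, but the route is genuinely different from the paper's. You argue by tracking cocycles directly: Step~1 establishes the translation formula $\phi(w\cdot\overline v)=\phi(\overline v)+\phi(\overline{w\kappa_b})$ by conjugating, and Step~2 extracts the identification of $A_b$ from the explicit recipe for $x_b$ in the proof of Theorem~\ref{thm_identification_of_stabilizer_and_2-torsion}. The paper instead works structurally: it writes down the long exact sequence in \'etale homology for the pair $(\overline S_b,\overline\Sigma_b)$, dualizes using the Weyl-invariant form, and identifies the resulting sequence with $0\to Z_G(\kappa_b)\to C[2]\to\pi_0(H^\theta)\to 0$, so that the connecting homomorphism $(\mu_2^3)_{\Sigma=0}^\vee\cong W_0\to H^1(k,Z_G(\kappa_b))$ is seen at once to send $w$ to the class of $w\kappa_b$ and, on the geometric side, to send the pair of punctures $\{O,P\}$ to $\delta([(P)-(O)])$. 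Your approach is more elementary and closer to the orbit picture; the paper's is more conceptual and yields the useful corollary (used after the theorem) that the trivial orbits have order exactly $4$ precisely when $H^0(k,Z_G(\kappa_b))=H^0(k,C[2])$.

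Two places in your write-up are thinner than they should be. In Step~1, the reason the conjugation action becomes honest translation is not merely that your lifts of $\sigma,\tau$ are $k$-rational: after choosing $h\in G(k^s)$ with $h\kappa_b=w\kappa_b$, the relevant inner automorphism of $Z_G(\kappa_b)$ is by $h^{-1}w$, which lies in $Z_{H^\theta}(\kappa_b)(k^s)=C[2](k^s)$ and hence acts trivially because $C[2]$ is \emph{abelian} --- this is the point you want to make explicit (your remark about ``commutativity of $J_b[2]$'' is close but not quite it, since $h^{-1}w$ need not lie in $J_b[2]$ itself). In Step~2, the recipe in Theorem~\ref{thm_identification_of_stabilizer_and_2-torsion} produces a single distinguished $x_b$, and your claim that varying the branch sets up a bijection between non-identity elements of $W_0$ and the three differences of marked points is asserted rather than argued; the paper's exact-sequence identification supplies exactly this dictionary for free. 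Your fallback of citing \cite{Tho13} is legitimate, but then you are effectively reproducing the paper's own proof by reference.
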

\begin{proof}
By Theorem \ref{thm_identification_of_stabilizer_and_2-torsion}, it is enough to prove the second part of the theorem. By definition, the identity of $H^1(k, J_b[2])$ corresponds to the orbit of the Weierstrass section $\kappa_b \in V_b(k)$. We have a short exact sequence of \'etale homology groups (where overline denotes base change to a separable closure $k^s/k$):
\begin{equation}\label{eqn_homology_exact_sequence} \xymatrix@1{ 0 \ar[r] & (\mu_2^3)_{\Sigma = 0} \ar[r] & H_1(\overline\Sigma_b, \bbF_2) \ar[r] & H_1(\overline S_b, \bbF_2) \ar[r] & 0.} 
\end{equation}
Here $(\mu_2^3)_{\Sigma = 0} \subset \mu_2^3$ denotes the kernel of the map which sums up co-ordinates. There is a natural symplectic duality $\langle \cdot, \cdot \rangle$ on $H^1(\overline \Sigma_b, \bbF_2)$ with radical $(\mu_2^3)_{\Sigma = 0}$, and which descends to the Poincar\'e duality pairing on $H_1(\overline S_b, \bbF_2)$. Identifying $J_b[2] = H_1(\overline S_b, \bbF_2)$, this allows us to describe the subgroup of $H^1(k, J_b[2])$ generated by the divisors at infinity as follows: it is the image of $(\mu_2^3)_{\Sigma = 0}^\vee$ under the connecting homomorphism attached to the dual exact sequence of $\bbF_2[\Gamma_k]$-modules (with $\Gamma_k = \Gal(k^s / k)$):
\begin{equation}\label{eqn_cohomology_exact_sequence} \xymatrix@1{ 0 \ar[r] & H_1(\overline S_b, \bbF_2) \ar[r] & H_1(\overline\Sigma_b, \bbF_2)^\vee \ar[r] &  \ar[r]  (\mu_2^3)_{\Sigma = 0}^\vee & 0, } 
\end{equation}
where we use the aforementioned pairing to identify  $H_1(\overline S_b, \bbF_2)^\vee \cong H_1(\overline S_b, \bbF_2)$. We now identify these exact sequences using the representation theory of the pair $(G, V)$. Let $H^\text{sc}$ denote the simply connected cover of $H$, and let $G^\text{sc} = (H^\text{sc})^\theta$; it is a connected subgroup of $H$. Let $C^\text{sc}$ denote the centralizer of $\kappa_b$ in $H^\text{sc}$, and $C$ its image in $H$. Then we can identify $Z_{G^\text{sc}}(\kappa_b) = C^\text{sc}[2]$, $Z_{H^\theta}(\kappa_b) = C[2]$, and $Z_G(\kappa_b) = \im(C^\text{sc}[2] \to C[2])$. The short exact sequence (\ref{eqn_homology_exact_sequence}) is canonically identified with the sequence
\begin{equation} \xymatrix@1{ 0 \ar[r] & Z_{G^\text{sc}} \ar[r] & C^\text{sc}[2] \ar[r] & \im(C^\text{sc}[2] \to C[2]) \ar[r] & 0} 
\end{equation}
(compare \cite[Theorem 4.10]{Tho13} and the proof of Theorem \ref{thm_identification_of_stabilizer_and_2-torsion}). Its dual is canonically identified with the sequence
\begin{equation}\label{eqn_cohomology_exact_sequence_weyl_equivalent} \xymatrix@1{ 0 \ar[r] & Z_{G}(\kappa_b) \ar[r] & C[2] \ar[r] & \pi_0(H^\theta) \ar[r] & 0, } 
\end{equation}
using the Weyl-invariant bilinear form on $X_\ast(C)$ (cf. \cite[Lemma 2.11]{Tho13}) and the canonical identification $C[2] / Z_{G}(\kappa_b) \cong \pi_0(H^\theta)$. The map $W_0 \to \pi_0(H^\theta)$ is an isomorphism, and the composite $W_0 \to \pi_0(H^\theta) \to H^1(k, Z_G(\kappa_b))$ sends an element $w \in W_0$ to the class corresponding to the orbit $G(k) \cdot w \kappa_b$. This concludes the proof.
\end{proof}
The proof of the second part of Theorem \ref{thm_rational_orbits_and_rational_points} has a useful corollary: it gives a criterion to tell when the trivial orbits generate a subgroup of $J_b(k) / 2 J_b(k)$ of order 4 (which one expects to be the case generically). Indeed, taking in mind the identification of the exact sequence (\ref{eqn_cohomology_exact_sequence}) with the sequence (\ref{eqn_cohomology_exact_sequence_weyl_equivalent}), one sees that this should be the case exactly when $H^0(k, Z_G(\kappa_b)) = H^0(k, C[2])$. The action of the Galois group $\Gamma_k$ on $C[2]$ arises from a homomorphism $\Gamma_k \to W(H, C) = W$ giving the action on the torus $C$, and this condition can be described in terms of the image of this homomorphism inside $W$. In particular, in the `generic' case where this image is the whole Weyl group, there will be no additional invariants, and consequently 4 trivial orbits in $G(k) \backslash V_b(k)$.
\begin{corollary}\label{cor_orbits_corresponding_to_selmer_group_exist}
Let $X$ be a smooth, projective, geometrically connected curve over $\bbF_q$, and let $K = \bbF_q(X)$. Let $b \in B^\text{rs}(K)$. Then the subset $G(K) \backslash V_b(K) \subset H^1(K, J_b[2])$ appearing in Corollary \ref{cor_parameterization_of_orbits} (with $k = K$) contains the 2-Selmer group $\Sel_2(J_b)$.
\end{corollary}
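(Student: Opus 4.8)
The plan is to combine the \emph{local} content of Theorem~\ref{thm_rational_orbits_and_rational_points}(1) with the Hasse principle for $G$ over $K$. By Corollary~\ref{cor_parameterization_of_orbits} applied with $k = K$, composed with the isomorphism $H^1(K, Z_G(\kappa_b)) \cong H^1(K, J_b[2])$ furnished by Theorem~\ref{thm_identification_of_stabilizer_and_2-torsion}, the subset $G(K) \backslash V_b(K) \subset H^1(K, J_b[2])$ is precisely the kernel of the map $\phi_K \colon H^1(K, J_b[2]) \to H^1(K, G)$ induced by the inclusion $Z_G(\kappa_b) \hookrightarrow G$. So it suffices to show that $\phi_K(x) = 0$ for every $x \in \Sel_2(J_b)$.

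First I would treat the local problem. Fix a place $v$. Since $x \in \Sel_2(J_b)$, its restriction $x_v \in H^1(K_v, J_b[2])$ has trivial image in $H^1(K_v, J_b)$, hence by the Kummer exact sequence
\[ 0 \lrw J_b(K_v)/2J_b(K_v) \lrw H^1(K_v, J_b[2]) \lrw H^1(K_v, J_b)[2] \lrw 0 \]
it lies in the image of the $2$-descent map $J_b(K_v)/2J_b(K_v) \to H^1(K_v, J_b[2])$. Now apply Theorem~\ref{thm_rational_orbits_and_rational_points}(1) with $k = K_v$, together with Corollary~\ref{cor_parameterization_of_orbits} with $k = K_v$: these say that the image of the $2$-descent map is contained in $G(K_v) \backslash V_b(K_v) = \ker \phi_{K_v}$. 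All of the identifications involved — the bijection of Corollary~\ref{cor_parameterization_of_orbits}, the isomorphism $Z_G(\kappa_b) \cong J_b[2]$, and the $2$-descent map — are functorial under the base change $K \to K_v$, and in particular $\phi_K(x)|_{K_v} = \phi_{K_v}(x_v)$. We conclude that $\phi_K(x) \in H^1(K, G)$ restricts to the trivial class in $H^1(K_v, G)$ for every place $v$ of $K$.

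It remains to show that an everywhere locally trivial class in $H^1(K, G)$ is trivial. Let $\widetilde{G} \to G$ be the simply connected central cover; since $G$ is isogenous to $\SL_2^4$ we have $\widetilde{G} \cong \SL_2^4$, so $H^1(K, \widetilde{G}) = H^1(K, \SL_2)^4 = 0$ by Hilbert's Theorem~90. The kernel $\mu = \ker(\widetilde{G} \to G)$ is a subgroup scheme of $Z(\widetilde{G}) = \mu_2^4$, hence, as $\cha \bbF_q \neq 2$, isomorphic to $(\bbZ/2\bbZ)^k$ for some $k$. The connecting map $\partial \colon H^1(K, G) \to H^2(K, \mu)$ then satisfies $\partial^{-1}(0) = \{0\}$ (by exactness, $\partial^{-1}(0)$ is the image of $H^1(K, \widetilde{G}) = 0$), it commutes with restriction to each $K_v$, and
\[ \ker\bigl( H^2(K, \mu) \to \prod\nolimits_v H^2(K_v, \mu) \bigr) \cong \ker\bigl( \mathrm{Br}(K)[2] \to \prod\nolimits_v \mathrm{Br}(K_v)[2] \bigr)^{k} = 0, \]
by the injectivity of $\mathrm{Br}(K) \to \bigoplus_v \mathrm{Br}(K_v)$ from global class field theory. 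Hence if $\xi \in H^1(K, G)$ is everywhere locally trivial, then $\partial(\xi)$ is everywhere locally trivial, so $\partial(\xi) = 0$, so $\xi \in \partial^{-1}(0) = \{0\}$. Taking $\xi = \phi_K(x)$ completes the proof: $\phi_K(x) = 0$, so $x \in G(K) \backslash V_b(K)$.

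I expect the only point requiring genuine care to be the functoriality assertion in the second paragraph — one must be sure that the canonical identifications of Corollary~\ref{cor_parameterization_of_orbits} and Theorem~\ref{thm_identification_of_stabilizer_and_2-torsion} are compatible with base change $K \to K_v$, so that the local computations assemble into a statement about the one global class $\phi_K(x)$. This holds essentially because $\phi$ is induced by a morphism of group schemes already defined over $K$. The remaining ingredients (Hilbert~90, the Brauer-group localization sequence) are standard; alternatively, the Hasse principle for $G$ may simply be quoted.
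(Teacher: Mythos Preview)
Your proof is correct and follows essentially the same approach as the paper: the paper's proof is the one-line assertion that the Hasse principle holds for $G$, and you have simply unpacked both halves of this — why a Selmer class is everywhere locally in $\ker\phi_{K_v}$ (via Theorem~\ref{thm_rational_orbits_and_rational_points}(1)), and why the Hasse principle holds for $G$ (via the simply connected cover $\SL_2^4$ and the Brauer-group local--global sequence). Your added remarks on functoriality under $K \to K_v$ are the right thing to check and are indeed routine.
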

\begin{proof}
This follows from the fact that the Hasse principle holds for $G$, i.e.\ that the map $H^1(K, G) \to \prod_v H^1(K_v, G)$ is injective.
\end{proof}

\subsection{$(G, V)$ and local integral orbits}

In the previous section, we have studied rational orbits; we now look at the integral situation. Let $X$ be a smooth, projective, geometrically connected curve over $\bbF_q$, and let $K = \bbF_q(X)$. Let $v$ be a place of $K$, and let $(E, P, Q)$ be tuple consisting of an elliptic curve $E$ over $K_v$ with two distinct, non-trivial marked rational points $P, Q \in E(K_v)$. We assume that the minimal model (as in \S \ref{sec_elliptic_curves_with_marked_points}) of $(E, P, Q)$ has squarefree discriminant, and let $b = (p_2, p_4, q_4, p_6) \in B(\cO_{K_v})$ denote the associated set of invariants. We write $J_b$ for the Jacobian of $E$, which we identify with $E$ via the map $R \mapsto (R) - (O)$.
\begin{theorem}\label{thm_properties_of_integral_square_free_orbits}
With assumptions as above, let $\cJ_b$ denote the N\'eron model of $E$ over $\cO_{K_v}$. Then:
\begin{enumerate}
\item The map $H^1(\cO_{K_v}, \cJ_b[2]) \to H^1(K_v, J_b[2])$ in \'etale cohomology is injective.
\item An orbit in $G(K_v) \backslash V_b(K_v)$ admits an integral representative (i.e.\ intersects $V_b(\cO_{K_v})$) if and only if it corresponds to an element of $J_b(K_v) / 2 J_b(K_v)$.
\item Suppose that $x, y \in V_b(\cO_{K_v})$ and $\gamma \in G(K_v)$ satisfies $\gamma x = y$. Then $\gamma \in G(\cO_{K_v})$. 
\end{enumerate}
\end{theorem}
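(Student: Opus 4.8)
The plan is to deduce all three assertions from the theory of N\'eron models together with the fact, recorded in Lemma~\ref{lem_characterization_of_square_free_discriminant}, that squarefree discriminant means $E$ has reduction of type $I_0$ or $I_1$ everywhere, so the special fibre of $\cJ_b$ is connected (trivial component group). For the first part, I would use that $\cJ_b[2]$ is a quasi-finite flat separated group scheme over $\cO_{K_v}$; its étale sheafification coincides with $\cJ_b[2]$ here because the component group is trivial, so $\cJ_b[2]$ is actually finite flat. The injectivity of $H^1(\cO_{K_v}, \cJ_b[2]) \to H^1(K_v, J_b[2])$ then follows from the valuative-criterion/properness argument: an étale $\cJ_b[2]$-torsor over $\cO_{K_v}$ that is trivial over $K_v$ acquires a section over the generic point, which extends over the Dedekind base since $\cJ_b[2] \to \Spec \cO_{K_v}$ is finite (hence proper). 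Equivalently, $H^1(\cO_{K_v}, \cJ_b[2])$ injects into $H^1(k(v), \cJ_b[2]_{k(v)})$ on the unramified part, and the Kummer sequence for $\cJ_b$ identifies $H^1(\cO_{K_v}, \cJ_b[2])$ with $\cJ_b(\cO_{K_v})/2 = J_b(K_v)/2J_b(K_v)$, using again that $\cJ_b$ has connected fibres so $H^1(\cO_{K_v}, \cJ_b) = 0$ (Lang's theorem on the special fibre plus smoothness).

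For the second part, this identification $H^1(\cO_{K_v}, \cJ_b[2]) \cong J_b(K_v)/2J_b(K_v)$ is exactly the target, so it remains to match integral orbits with integral cohomology classes. Here I would mimic Corollary~\ref{cor_parameterization_of_orbits} over the base $\cO_{K_v}$: the Kostant section $\kappa$ of Proposition~\ref{prop_existence_of_kostant_section} is defined over $\bbF_q$, hence gives an integral point $\kappa_b \in V_b(\cO_{K_v})$, and Proposition~\ref{prop_invariant_theory_of_theta_group}(3) says centralizers are smooth. One gets a bijection between $G(\cO_{K_v}) \backslash V_b(\cO_{K_v})$ and $\ker(H^1(\cO_{K_v}, Z_G(\kappa_b)) \to H^1(\cO_{K_v}, G))$; the latter $H^1$ is trivial by Lang/Hensel since $G$ is smooth connected over $\cO_{K_v}$. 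The subtlety is to identify $Z_G(\kappa_b)$ over $\cO_{K_v}$ with $\cJ_b[2]$ — i.e.\ to propagate Theorem~\ref{thm_identification_of_stabilizer_and_2-torsion}(2) from $B^{\text{rs}}$ to an integral statement. Since $b \in B(\cO_{K_v})$ but possibly $b \bmod \varpi_v \notin B^{\text{rs}}$ (this is precisely where the $I_1$ fibre sits), one needs that the closure of the generic fibre identification extends; squarefreeness of $\Delta$ guarantees $\cJ_b[2]$ stays finite flat, and a direct check on the $D_4$ singularity degeneration (or the explicit recipe in the proof of Theorem~\ref{thm_identification_of_stabilizer_and_2-torsion}) shows the stabilizer scheme $Z^{\text{rs}}$ extends to the scheme one wants. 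Then a commuting square of injections forces integral orbits to correspond exactly to the image of $J_b(K_v)/2J_b(K_v)$.

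For the third part — that an element of $G(K_v)$ carrying one integral representative to another is automatically integral — I would argue as follows. First, $\gamma$ normalizes neither more nor less than is forced: from $\gamma x = y$ with $x,y \in V_b(\cO_{K_v})$ and both lying in the same $G(K_v)$-orbit, $\gamma$ is well-defined up to left multiplication by $Z_G(y)(K_v)$ and right by $Z_G(x)(K_v)$. By part (2) these integral classes are the same class in $J_b(K_v)/2J_b(K_v)$, so after modifying by an element of $G(\cO_{K_v})$ (which exists because $G(\cO_{K_v})$ acts transitively within an integral orbit by the same torsor argument as above, using $H^1(\cO_{K_v}, Z_G(\kappa_b))$ injecting into $H^1(K_v, -)$ by part (1)) we reduce to $x = y$, i.e.\ to showing $Z_G(x)(K_v) \cap$ (matrices carrying $V_b(\cO_{K_v})$ into itself) $= Z_G(x)(\cO_{K_v})$. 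But $Z_G(x)$ is a smooth $\cO_{K_v}$-group scheme, finite flat here (it is a form of $\cJ_b[2]$), so $Z_G(x)(K_v) = Z_G(x)(\cO_{K_v})$ by the valuative criterion — there is simply no room for a non-integral point. The \textbf{main obstacle} I anticipate is the integral propagation in part (2): verifying that the stabilizer group scheme $Z_G(\kappa_b)$ over all of $\Spec \cO_{K_v}$ (not just the regular semisimple locus) is finite flat and canonically the $2$-torsion of the N\'eron model, which requires a careful local analysis at the $I_1$ fibre of the miniversal $D_4$ deformation; everything else is formal descent and Lang-type vanishing.
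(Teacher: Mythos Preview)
Your overall architecture is right and you have correctly identified the integral identification $Z_G(\kappa_b) \cong \cJ_b[2]$ as the crux; but two concrete points need fixing.

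\textbf{First, $\cJ_b[2]$ is not finite over $\cO_{K_v}$ in the $I_1$ case.} The special fibre of $\cJ_b$ is $\bbG_m$ (the component group is trivial, yes, but the identity component is a torus), so $\cJ_b[2]$ has generic fibre of order $4$ and special fibre of order $2$: it is quasi-finite \'etale but \emph{not} finite. Hence your valuative-criterion argument for part~(1) does not apply, and your argument for part~(3) (``$Z_G(x)$ is finite flat, so $Z_G(x)(K_v)=Z_G(x)(\cO_{K_v})$'') fails for the same reason. Your alternative Kummer route for part~(1) is fine; the paper instead identifies $H^1(\cO_{K_v},\cJ_b[2])=H^1(k(v),\cJ_b[2](\kappa(v)))$ and uses inflation--restriction together with $\cJ_b[2](\kappa(v))=J_b[2](K_v^s)^{I_{K_v}}$. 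For part~(3) the paper does not use properness at all: it performs an explicit point count, bounding $|Z_{H^\theta}(\kappa_b)(K_v)|\leq 2^3$ from above via the nontrivial inertia action, and bounding $|Z_{H^\theta}(\kappa_{\bar b})(k(v))|\geq 2^3$ from below via the Jordan decomposition $\kappa_{\bar b}=v_s+v_n$ and the structure of $Z_{H^\theta}(v_s)$.

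\textbf{Second, the torsor parametrization you invoke for part~(2) is only valid on the regular locus.} You write ``one gets a bijection between $G(\cO_{K_v})\backslash V_b(\cO_{K_v})$ and $\ker(H^1(\cO_{K_v},Z_G(\kappa_b))\to H^1(\cO_{K_v},G))$'', but Corollary~\ref{cor_parameterization_of_orbits} uses that $G$ acts transitively on the geometric fibre, which fails over the closed point when $\Delta(\bar b)=0$: the fibre $V_{\bar b}$ then contains non-regular elements on which $G$ does not act transitively. The paper closes this gap by proving that every $x\in V_b(\cO_{K_v})$ is automatically regular on the special fibre: writing $f=\ad(x)$ on $\frh_{\cO_{K_v}}/\frz_\frh(x)$, one has $\det f=\Delta(x)$ up to units, and if $\bar x$ were irregular its centralizer would jump by at least $2$, forcing $\ord_{K_v}\Delta\geq 2$. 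Only after this step can one restrict to $V_b^{\mathrm{reg}}=\bigcup_{w\in W_0} w\cdot V_b^{\mathrm{reg},0}$ and run the torsor argument you sketch. For the ``if'' direction the paper also uses the subregular section $\Sigma\subset V$ (not just $\kappa$) together with Hensel's lemma to manufacture integral representatives for nontrivial classes; your torsor argument, once repaired with the regularity input, would give this direction too.
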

\begin{proof}
We have $H^1(\cO_{K_v}, \cJ_b[2]) = H^1(k(v), \cJ_b[2](\kappa(v)))$. Since the discriminant is square-free, we have $\cJ_b[2](\kappa(v)) = J_b[2](K_v^s)^{I_{K_v}}$, so the injectivity of the first part is a consequence of inflation-restriction.

For the `if' of the second part, we use the existence of the section $\Sigma \subset V$, which shows (together with the commutative diagram of Theorem \ref{thm_identification_of_stabilizer_and_2-torsion}) that any element of $J_b(K_v) / 2 J_b(K_v)$ which can be represented by a divisor $(R) - (O)$, where $R \in \Sigma_b(\cO_{K_v})$, is represented by an element of $V(\cO_{K_v})$. Since the trivial orbits have integral representatives, essentially by definition, this reduces us to showing that any non-trivial orbit in $J_b(K_v) / 2 J_b(K_v)$ is represented by such a divisor $(R) - (O)$. We have a short exact sequence
\[ \xymatrix@1{ 0 \ar[r] & \cJ_b(\cO_{K_v})^0 \ar[r] & J_b(\cO_{K_v}) \ar[r] & \cJ_{b}(k(v)) \ar[r] & 0,} \]
where the kernel is a pro-$p$-group ($p = \cha \bbF_q$), hence an isomorphism 
\[ \cJ_b(\cO_{K_v}) /  2\cJ_b(\cO_{K_v}) \cong \cJ_b(k(v)) / 2 \cJ_b(k(v)) \cong H^1(\cO_{K_v}, \cJ_b[2]). \]
If $[ \overline{x} ] \in  \cJ_b(k(v)) / 2 \cJ_b(k(v))$ is a non-trivial class (i.e.\ not in the subgroup generated by the 3 marked points of $E$ at infinity), we can choose a representative $\overline{x} \in \cJ_b(k(v))$ of the form $(\overline{R}) - (\overline{O})$, where $\overline{R} \in \Sigma_b(k(v))$. Lifting $\overline{R}$ to a point $R \in \Sigma_b(\cO_{K_v})$ via Hensel's lemma then shows the existence of the desired integral representative in $V_b(\cO_{K_v})$.

We now turn to the `only if' of the second part. We first note that any element $x \in V_b(\cO_{K_v})$ in fact lies in $V_b^\text{reg}(\cO_{K_v})$, i.e.\ $\overline{x} = x \text{ mod }(\varpi_v)$ is regular in $V_{k(v)}$. This is clear if $\Delta(v)$ is a unit in $\cO_{K_v}$. Otherwise, we note that $\overline{x}$ is regular in $V_{k(v)}$ if and only if it is regular in $\frh_{k(v)}$; and if it is not regular in $\frh_{k(v)}$, then its centralizer has dimension at least $\dim T + 2$ (see \cite[III. 3.25]{Spr70}). Let $\frc = \frz_{\frh_{K_v}}(x)$, $\frc^0 = \frc \cap \frh_{\cO_{K_v}}$.
 Let $f : \frh_{\cO_{K_v}} / \frc^0 \to \frh_{\cO_{K_v}} /  \frc^0$ denote the morphism induced by $\ad x$ after passage to quotient. We have the relation $\det f = \Delta(x)$, up to units in $\cO_{K_v}^\times$. If $\overline{x}$ is not regular, then $\overline{f} = f \text{ mod } (\varpi_v)$ has kernel of dimension at least 2, hence $\ord_{K_v} \det f \geq 2$, a contradiction.

We next observe that the map $G_{\cO_{K_v}} \to V_b^\text{reg}$, $g \mapsto g \cdot \kappa_b$, is \'etale, and a torsor over its image $V_b^\text{reg, 0} \subset V_b^\text{reg}$ for the \'etale group scheme $Z_{G_{\cO_{K_v}}}(\kappa_b)$ over $\cO_{K_v}$. Moreover, we have $V_b^\text{reg} = \cup_{w \in W_0} w \cdot V_b^\text{reg, 0}$ (by \cite[Theorem 0.17]{Lev07}). It follows that there is a canonical bijection
\begin{equation}\label{eqn_regular_integral_orbits} G(\cO_{K_v}) \backslash V_b^\text{reg, 0}(\cO_{K_v}) \cong H^1(\cO_{K_v},Z_{G_{\cO_{K_v}}}(\kappa_b)). 
\end{equation}
The isomorphism $Z_G(\kappa_b) \cong J_b[2]$ extends uniquely to an isomorphism $Z_{G_{\cO_{K_v}}}(\kappa_b) \cong \cJ_b[2]$. If $\Delta$ is a unit, then this is immediate from Theorem \ref{thm_identification_of_stabilizer_and_2-torsion}. If $\Delta$ is not a unit, then it suffices to show that the isomorphism $Z_G(\kappa_b) \cong J_b[2]$ identifies $Z_G(\kappa_b)(\kappa(v)) \subset Z_G(\kappa_b)(K_v^s)^{I_v}$ with $\cJ_b[2](\kappa(v)) \subset J_b[2](K_v^s)^{I_v}$. Since the latter group has order 2, it is enough to show that $Z_G(\kappa_b)(\kappa(v))$ is non-trivial. This follows from the fact that $\Sigma_{\overline{b}}$ has a unique singularity of type $A_1$, as we now show. Let $\overline{b} = b \text{ mod }(\varpi_v)$. The element $\kappa_{\overline{b}} \in V(k(v))$ has a Jordan decomposition $\kappa_{\overline{b}} = v_s + v_n$ as a sum of commuting semi-simple and nilpotent parts, and we can compute (using the same technique as in \cite[Proposition 2.8]{Tho13})
\begin{equation}\label{eqn_calculation_of_regular_non_semisimple_centralizer} Z_G(\kappa_{\overline{b}}) =  Z_{Z_{G^\text{sc}}(v_s)} [2] / Z_{G^\text{sc}}.
\end{equation}
 The fact that $\Sigma_{\overline{b}}$ has a singularity of type $A_1$ implies (\cite[Corollary 3.16]{Tho13}, the proof of which goes over without change in our setting) that $Z_{G^\text{sc}}(v_s)$ has derived group of type $A_1$. In particular, its centre contains a torus of rank 3, and consequently the group appearing in (\ref{eqn_calculation_of_regular_non_semisimple_centralizer}) must be non-trivial.

We can thus enlarge (\ref{eqn_regular_integral_orbits}) to a commutative diagram
\begin{equation}
\begin{aligned}
 \xymatrix{ G(\cO_{K_v}) \backslash V_b^\text{reg, 0}(\cO_{K_v})  \ar[r] \ar[d] &  H^1(\cO_{K_v}, \cJ_b[2]) \ar[d] \\
 G(K_v) \backslash V_b(K_v) \ar[r] & H^1(K_v, J_b[2]). } 
\end{aligned}
\end{equation}
This shows that any element of  $G(K_v) \backslash V_b(K_v)$ which is in the image of the left-hand vertical arrow lies in the image of $H^1(\cO_{K_v}, \cJ_b[2]) \cong J_b(K_v) / 2 J_b(K_v) \subset  H^1(K_v, J_b[2])$. Since we have $V_b^\text{reg} = \cup_{w \in W_0} w V_b^\text{reg, 0}$, and $w$ acts on $H^1(K_v, J_b[2])$ as translation by trivial orbits, we finally see that any element of $G(K_v) \backslash V_b(K_v)$ which admits an integral representative corresponds to an element of $J_b(K_v) / 2 J_b(K_v)$.

Finally, we come to the third part of the theorem. The integrality is insensitive to passage to unramified extensions of $K_v$, so we reduce to the statement that the \'etale group scheme $Z_{H^\theta_{\cO_{K_v}}}(\kappa_b)$ satisfies the N\'eron mapping property, i.e.\ its $K_v$-points all extend to $\cO_{K_v}$-points. If $\Delta$ is a unit then this \'etale group scheme is finite \'etale. If $\ord_{K_v} \Delta = 1$, then we have seen that the action of inertia on $Z_{G_{\cO_{K_v}}}(\kappa_b)(K_v^s)$ is non-trivial, so $|Z_{H^\theta_{\cO_{K_v}}}(\kappa_b)(K_v)| \leq 2^3$. On the other hand, using again the Jordan decomposition $\kappa_{\overline{b}} = v_s + v_n$, we have
\[ Z_{H^\theta}(\kappa_{\overline{b}})(\kappa(v)) = Z_{Z_{H^\theta}(v_s)}(\kappa(v))[2], \]
and this group has size at least $2^3$. This shows that the desired property of $Z_{H^\theta_{\cO_{K_v}}}(\kappa_b)$ does hold, and completes the proof of the theorem.
\end{proof}
\subsection{$(G, V)$ and global integral orbits}\label{sec_global_integral_orbits}
We can now discuss the global picture. Let $X$ be a smooth, projective, geometrically connected curve over $\bbF_q$, and let $K = \bbF_q(X)$. Let $D = \sum_v m_v \cdot v$ be a divisor on $X$, and let $(E, P, Q) \in \cX_D$. We recall (see \S \ref{sec_elliptic_curves_with_marked_points}) that this means that $E$ is an elliptic curve over $K$ with two distinct non-trivial marked rational points $P, Q \in E(K)$, and which can be represented by an equation
\begin{equation}\label{eqn_global_weierstrass_equation} y( xy + 2 q_4 ) = x^3 + p_2 x^2  + p_4 x  + p_6 
\end{equation}
with 
\begin{equation}b = (p_2, p_4, q_4, p_6) \in H^0( X, \cO_X(2D) \oplus \cO_X(4D) \oplus \cO_X(4D) \oplus \cO_X(6D)) = H^0(X, B_D) \subset B(K)
\end{equation}
of square-free discriminant in $H^0(X, \cO_X(24D))$. (The reason for restricting to curves with $\cL_E$ a square is that the invariant degrees of the representation $(G, V)$ then agree with the weights of the equation (\ref{eqn_global_weierstrass_equation}) defining the curve $E$.)

Let $x \in V_b(K)$ be an element corresponding to an element of the group $\Sel_2(E)$ (see Corollary \ref{cor_orbits_corresponding_to_selmer_group_exist}). Then for every place $v$ of $K$, $\varpi_v^{m_v} x$ has minimal, integral invariants $\pi( \varpi_v^{m_v} x ) = \varpi_v^{m_v} \cdot b \in \cO_{K_v}^4$ of squarefree discriminant, and Theorem \ref{thm_properties_of_integral_square_free_orbits} implies that we can find $g_v \in G(K_v)$ such that $ \varpi_v^{m_v} x \in g_v V(\cO_{K_v})$. For almost all places $v$, we have $m_v = 0$ and can choose $g_v = 1$. Moreover, $g_v$ is defined up to right multiplication by $G(\cO_{K_v})$, by the third part of Theorem \ref{thm_properties_of_integral_square_free_orbits}. If we replace $x$ by $\gamma x$ for some $\gamma \in G(K)$, then $g_v$ can be replaced by $\gamma g_v$. We have therefore defined a map 
\begin{equation} \inv : \Sel_2(E) \to G(K) \backslash G(\bbA_K) / G(\widehat{\cO}_K).
\end{equation}
 It is clear that this map depends only on $(E, P, Q)$ and not on the choice of equation $b \in H^0(X, \cO_X(D))$ representing $(E, P, Q)$ (since all choices differ by the action of $\bbF_q^\times$).

To any $g \in G(\bbA_K)$, we associate the $G$-torsor $F_g$ and the vector bundle $\cV_g = F_g \times_G V$, which has sections described by (\ref{eqn_sections_of_bundle_associated_to_torsor}).  The above discussion shows that if $[g] = \inv(x)$, then $x$ naturally defines a element of 
\[ V(K) \cap \prod_v g_v \varpi_v^{-m_v} V(\cO_{K_v}) = H^0(X,  \cV_g(D)), \]
and the image of $x$ under the map $\pi : H^0(X, \cV_g(D)) \to H^0(X, B_D)$ equals $b$. We have constructed the first map in the following:
\begin{theorem}\label{thm_invariant_partition_of_selmer_elements}
Let $(E, P, Q) \in \cX_D$ be represented by $b \in H^0(X, B_D)$. We identify $E = J_b$ using the map $R \mapsto (R) - (O)$. Let $g = (g_v)_v \in G(\bbA_K)$. Then the following two sets are in canonical bijection:
\begin{enumerate}
\item The set of elements $x \in \Sel_2(E)$ such that $\inv(x) = [(g_v)_v]$.
\item The set of sections $s \in H^0(X, \cV_g(D))$ such that $\pi(s) = b$, taken up to the action of the group $\Aut(F_g)$. 
\end{enumerate}
\end{theorem}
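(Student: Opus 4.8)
The plan is to exhibit the bijection directly in both directions, using the local integral orbit theory of Theorem~\ref{thm_properties_of_integral_square_free_orbits} as the main input, and then to verify that the two assignments are mutually inverse and compatible with the indicated equivalences.

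\emph{From Selmer elements to sections.} This direction is essentially the construction preceding the statement, and I would only need to check well-definedness. If $x \in \Sel_2(E)$ with $\inv(x) = [(g_v)_v]$, then, since $\inv(x)$ is the double coset obtained by choosing at each $v$ an element $g'_v \in G(K_v)$ with $\varpi_v^{m_v} x \in g'_v V(\cO_{K_v})$, after replacing $x$ by a suitable translate $\gamma x$ with $\gamma \in G(K)$ — which alters neither the $G(K)$-orbit (hence the Selmer class) nor the invariants $b$ — we may assume $\varpi_v^{m_v} x \in g_v V(\cO_{K_v})$ for every $v$. By (\ref{eqn_sections_of_bundle_associated_to_torsor}) twisted by $D$, this says exactly that $x \in V(K) \cap \prod_v g_v \varpi_v^{-m_v} V(\cO_{K_v}) = H^0(X, \cV_g(D))$, with $\pi(x) = b$. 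To see the resulting section is well defined modulo $\Aut(F_g)$, I would take two valid choices $\gamma,\gamma' \in G(K)$ and note that at each $v$ the vectors $g_v^{-1}\varpi_v^{m_v}\gamma x$ and $(g_v^{-1}\gamma'\gamma^{-1}g_v)\cdot(g_v^{-1}\varpi_v^{m_v}\gamma x)$ both lie in $V(\cO_{K_v})$ with the same squarefree invariants $\varpi_v^{m_v}\cdot b$; Theorem~\ref{thm_properties_of_integral_square_free_orbits}(3) then forces $g_v^{-1}\gamma'\gamma^{-1}g_v \in G(\cO_{K_v})$ for all $v$, hence $\gamma'\gamma^{-1} \in G(K) \cap g G(\widehat{\cO}_K) g^{-1} = \Aut(F_g)$, and it carries $\gamma x$ to $\gamma' x$.

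\emph{From sections to Selmer elements.} Given $s \in H^0(X,\cV_g(D))$ with $\pi(s) = b$, I regard $s$ as a point of $V_b(K)$ via $H^0(X,\cV_g(D)) \subset V(K)$; by Corollary~\ref{cor_parameterization_of_orbits} its $G(K)$-orbit determines a class $c$ in $\ker(H^1(K,Z_G(\kappa_b)) \to H^1(K,G))$, which through the isomorphism $Z_G(\kappa_b) \cong J_b[2]$ we view inside $H^1(K,E[2])$. The heart of the argument is to show $c \in \Sel_2(E)$. Fix a place $v$: by Lemma~\ref{lem_characterization_of_square_free_discriminant} the base change of $(E,P,Q)$ to $K_v$ has minimal model of squarefree discriminant, with $\varpi_v^{m_v}\cdot b$ its minimal integral invariant tuple, and the membership $s \in g_v\varpi_v^{-m_v}V(\cO_{K_v})$ says $g_v^{-1}\varpi_v^{m_v}s \in V(\cO_{K_v})$ is an integral representative of the $K_v$-orbit over $\varpi_v^{m_v}\cdot b$. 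Hence Theorem~\ref{thm_properties_of_integral_square_free_orbits}(2) shows that the localisation of $c$ at $v$, which corresponds to this orbit, lies in the image of the Kummer map $J_b(K_v)/2J_b(K_v) = E(K_v)/2E(K_v) \hookrightarrow H^1(K_v, J_b[2])$, i.e.\ maps to $0$ in $H^1(K_v, E)$. As this holds at every $v$, $c \in \Sel_2(E)$, and $\inv(c) = [g]$ holds by construction, the $g_v$ witnessing the local integrality. Finally, replacing $s$ by $\phi s$ for $\phi \in \Aut(F_g) \subset G(K)$ changes neither the $G(K)$-orbit nor $[g]$, so this assignment factors through the $\Aut(F_g)$-quotient.

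\emph{Mutual inverseness and the main difficulty.} That the two maps are mutually inverse is then formal: running the second construction on the class attached to $x$ returns the orbit $G(K)\cdot(\gamma x) = G(K)\cdot x$, hence $x$; and running the first construction on a class built from $s$, one may take $s$ itself as the representative (it already satisfies $\varpi_v^{m_v}s \in g_vV(\cO_{K_v})$), any other admissible choice differing from it by $\Aut(F_g)$ by the argument of the second paragraph above. The step I expect to demand the most care is not any single deduction but the uniform bookkeeping of the $\bbG_m$-rescaling: I would verify carefully that $\varpi_v^{m_v}\cdot b$ — under the $\bbG_m$-action induced on $B$, where the invariant coordinates carry weights $2,4,4,6$ — really is the minimal integral model of \S\ref{sec_elliptic_curves_with_marked_points} at $v$, so that Theorem~\ref{thm_properties_of_integral_square_free_orbits} applies with all hypotheses met, and that the identification $Z_G(\kappa_b) \cong J_b[2]$ is unaffected by this rescaling — which follows from the $\bbG_m$-equivariance of the Kostant section recorded in Proposition~\ref{prop_existence_of_kostant_section}, together with the fact that rescaling fixes stabilisers. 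Everything else is a routine unwinding of the adelic--local dictionary already assembled in \S\ref{sec_semistability_and_integration} and the preceding subsections.
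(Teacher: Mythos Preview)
Your proposal is correct and follows essentially the same approach as the paper: the forward map is the construction already given in the text preceding the theorem, the inverse map sends a section $s$ to its $G(K)$-orbit in $V_b(K)$ and uses Theorem~\ref{thm_properties_of_integral_square_free_orbits}(2) together with square-freeness to verify the resulting class lies in $\Sel_2(E)$, and mutual inverseness is formal. Your treatment is in fact more explicit than the paper's in places---particularly in invoking Theorem~\ref{thm_properties_of_integral_square_free_orbits}(3) to check that the forward map lands in a well-defined $\Aut(F_g)$-orbit, and in tracking the $\bbG_m$-rescaling---but these are elaborations of the same argument rather than a different route.
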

\begin{proof}
We have constructed the map from the first set to the second set. We now construct its inverse. Let $s$ be a global section in
\[ H^0(X, \cV_g(D)) = V(K) \cap \prod_v g_v \varpi_v^{-m_v} V(\cO_{K_v}) \]
such that $\pi(s) = b$. Writing $x$ for the image of $s$ in $V(K)$ under the canonical inclusion, we obtain an orbit in $G(K) \backslash V_b(K)$. This orbit is independent of the choice of representative in the $\Aut(F_g)$-orbit of $s$; indeed, we have $\Aut(F_g) = G(K) \cap g G(\widehat{\cO}_K) g^{-1}$, so replacing $s$ by $\gamma s$ for $\gamma \in \Aut(F_g)$ would just replace $x$ by $\gamma x$, leaving the $G(K)$-orbit of $x$ unchanged.

We need to show that $x$ lies in the subset of $G(K) \backslash V_b(K)$ corresponding to the 2-Selmer group. However, this follows from the second part of Theorem \ref{thm_properties_of_integral_square_free_orbits} and the fact that $s$ has square-free discriminant. It is clear from the construction that this map is inverse to the other, so this completes the proof.
\end{proof}
To illustrate the construction of this invariant map, we calculate its image when applied to the trivial elements in $J_b(K) / 2 J_b(K) \subset \Sel_2(J_b)$. Recall that we have defined $\kappa = E + \frz_\frh(F)$, where $(E, d \check{\rho}(2), F)$ is a regular normal $\frs\frl_2$-triple in $\frh$. The action $t \cdot x = t \Ad \check{\rho}(t^{-1})(x)$ leaves $\kappa$ invariant and contracts to the unique fixed point $E$ (see Proposition \ref{prop_existence_of_kostant_section}). In particular, if $v$ is a place of $K$, $b \in B(K_v)$, and $\lambda \in K_v^\times$, then we have the following formula giving the behaviour of the Kostant section under scaling:
\begin{equation} \kappa_{\lambda b} = \check{\rho}(\lambda^{-1}) \lambda \kappa_b. 
\end{equation}
If $b \in B(\cO_{K_v})$, then $\kappa_b \in V(\cO_{K_v})$ is an integral representative of the orbit in $V_b(K)$ corresponding to the identity element of $\Sel_2(J_b)$. If $b \in H^0(X, B_{D}) \subset B(K)$ is associated to a pointed curve as above, then we find $\varpi_v^{m_v} b \in B(\cO_{K_v})$ is the minimal integral representative, hence
\begin{equation} \kappa_{\varpi_v^{m_v} b} = \check{\rho}(\varpi_v^{-m_v}) \varpi_v^{m_v} b \in \varpi_v^{m_v} V(\cO_{K_v}). 
\end{equation}
It then follows from the definition that we have $\inv(\kappa_b) = [(\check{\rho}(\varpi_v^{m_v}))_v]$. The same formalism applies to the other trivial orbits: if $w \in W_0$, then the representative of the corresponding trivial orbit in $V(K)$ is $w \kappa_b$. For each place $v$ of $K$, we have
\begin{equation} w \check{\rho}(\varpi_v^{-m_v}) w^{-1} \varpi_v^{m_v} w \kappa_b= w \check{\rho}(\varpi_v^{-m_v}) \varpi_v^{m_v} \kappa_b \in \varpi_v^{m_v} V(\cO_{K_v}), \end{equation}
so it follows from the definition that we have $\inv(w \kappa_b ) = [ (w \check{\rho}(\varpi_v^{m_v}) w^{-1})_v]$. This implies in particular:
\begin{lemma}\label{lem_geography_of_trivial_orbits}
Let $(E, P, Q) \in \cX_D$ be represented by $b \in H^0(X, B_D)$, and let $x \in \Sel_2(J_b)$ be a trivial element. Suppose that $\deg D > 0$. Then $\inv(x) \in \cY_{G, B}(D)^{<0}$.
\end{lemma}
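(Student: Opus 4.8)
The plan is to feed the explicit description of $\inv$ on trivial elements — the formula $\inv(w\kappa_b) = [(w\check{\rho}(\varpi_v^{m_v})w^{-1})_v]$ worked out in the paragraph just above — into the theory of canonical reductions from \S\ref{sec_semistability_and_integration}, exploiting the fact that the $G$-torsor attached to a trivial element is induced from a cocharacter of $T$ and is therefore completely transparent. A trivial element $x \in \Sel_2(J_b)$ is represented by $w\kappa_b$ for some $w \in W_0$, so $\inv(x) = [(g_v)_v]$ with $g_v = w\check{\rho}(\varpi_v^{m_v})w^{-1}$. Since $w$ normalises $T$, putting $\check{\rho}_w = w\cdot\check{\rho} \in X_\ast(T)$ gives $g_v = \check{\rho}_w(\varpi_v^{m_v}) \in T(K_v)$, so $g = (g_v)_v \in T(\bbA_K)$ and the $G$-torsor $F_g$ is induced (via $T \hookrightarrow G$) from the $T$-torsor attached to $g$.

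First I would record the tautological reduction $F_B$ of $F_g$ to the Borel $B$ and compute its slope: for each $\chi \in X^\ast(T)$ the line bundle $F_B \times_{B,\chi}\bbA^1$ is the one attached to the idele $(\chi(g_v))_v = (\varpi_v^{m_v\langle\chi,\check{\rho}_w\rangle})_v$, namely $\cO_X(-\langle\chi,\check{\rho}_w\rangle D)$, whence $\sigma_{F_B} = -(\deg D)\check{\rho}_w$. Next I would check that $F_B$ is the \emph{canonical} reduction: its Levi quotient is the torus $T$, so $F_B\times_B T$ is semi-stable for free, and by Theorem \ref{thm_existence_of_canonical_reduction} it remains only to verify $\langle\sigma_{F_B},\alpha\rangle > 0$ for each simple root $\alpha \in R^- = \{-a_1,\dots,-a_4\}$ of $B$, i.e. $\langle a_i,\check{\rho}_w\rangle > 0$ for all $i$; as $W_0$ permutes $\{a_1,\dots,a_4\}$ this reduces to $\langle a_j,\check{\rho}\rangle > 0$ for all $j$, a one-line check with the explicit $\check{\rho}$. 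Thus $\inv(x) = [g] \in \cY_{G,B}$. To locate it within $\cY_{G,B} = \cY_{G,B}(D)^{<0} \sqcup \cY_{G,B}(D)^{\text{sp}} \sqcup \cY_{G,B}(D)^{>2g_X-2}$, I would use that, $F_g$ being induced from the $T$-torsor $g$, the weight-space decomposition of $V$ yields $\cV_g = F_g \times_G V = \bigoplus_{\chi\in\Phi_V}\cO_X(-\langle\chi,\check{\rho}_w\rangle D)$, a direct sum of line bundles whose Harder--Narasimhan filtration is obtained simply by grouping summands by degree; its lowest slope is $q_0 = -(\deg D)\max_{\chi\in\Phi_V}\langle\chi,\check{\rho}_w\rangle = -(\deg D)\max_{\chi\in\Phi_V}\langle\chi,\check{\rho}\rangle$ (again since $W_0$ preserves $\Phi_V$). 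That maximum is attained precisely at the top weight $\alpha_0$ of $V$ — the weight labelled $1$ in the figure preceding Proposition \ref{prop_invariant_theory_of_theta_group} — with value $\langle\alpha_0,\check{\rho}\rangle = h - 1 = 5$ for the Coxeter number $h = 6$ of $D_4$. Hence $q_0 = -5\deg D$, so $\deg D + q_0 = -4\deg D < 0$ because $\deg D > 0$, which is exactly the defining condition of $\cY_{G,B}(D)^{<0}$.

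I do not foresee a substantial obstacle: once one observes that $g$ takes values in $T$, everything is bookkeeping. The two points needing care are the sign in the dictionary between an idele and its associated line bundle — it is this sign that forces the canonical reduction to be to $B$ (the Borel attached to $R^-$) rather than to the opposite Borel, which is precisely what is needed for $F_g$ to lie in $\cY_{G,B}$ — and the elementary fact that $\alpha_0$ is the unique maximal element of $\Phi_V$ with $\langle\alpha_0,\check{\rho}\rangle = 5$, read off directly from the Hasse diagram of $\Phi_V$.
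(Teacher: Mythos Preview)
Your argument is correct and follows the same line as the paper's own proof: both use the explicit formula $\inv(w\kappa_b)=[(w\check{\rho}(\varpi_v^{m_v})w^{-1})_v]$, observe that this lies in $T(\bbA_K)\subset B(\bbA_K)$, verify that the tautological $B$-reduction is canonical because the Levi is a torus (so semi-stability is vacuous) and $\langle\check{\rho}_w,a_i\rangle>0$, and then identify the lowest slope of $\cV_g(D)$ with $(-\langle\check{\rho},\alpha_0\rangle+1)\deg D$. Your treatment is slightly more thorough in that you handle all $w\in W_0$ at once via the $W_0$-invariance of $\{a_1,\dots,a_4\}$ and $\Phi_V$, whereas the paper does the case $w=e$ and remarks that the others are similar; and your value $\langle\check{\rho},\alpha_0\rangle=h-1=5$ (so $\deg D+q_0=-4\deg D$) is the correct one, whereas the paper's displayed $-2\deg D$ is an arithmetic slip that does not affect the conclusion.
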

\begin{proof}
We just treat the case of $\kappa_b$, since the other cases are very similar. We need to show that $g = ((\check{\rho}(\varpi_v^{m_v}))_v)$ lies in $B(\bbA_K)^{\text{pos, ss}}$ and that the `lowest slope' part of $\cV_g(D)$ has strictly negative slope. Since the Levi quotient of $B$ is a torus, the semi-stability condition is vacuous, so we must check is that for all $a \in R^-$, we have $\log_q \| a((\check{\rho}(\varpi_v^{m_v}))_v) \| > 0$. We compute
\[ \log_q \| a((\check{\rho}(\varpi_v^{m_v}))_v) \| = - \langle \check{\rho}, a \rangle \cdot \deg D > 0. \]
The lowest slope part of $\cV_g(D)$ has slope 
\[ \log_q \| \alpha_0((\check{\rho}(\varpi_v^{m_v}))_v) \| = - \langle \check{\rho}, \alpha_0 \rangle \cdot \deg D + \deg D = - 2 \deg D < 0,  \]
as required.
\end{proof}

\subsection{The main theorem}

We once again suppose that$X$ be a smooth, projective, geometrically connected curve over $\bbF_q$, and let $K = \bbF_q(X)$. If $D$ is a divisor on $X$, then we write $H^0(X, B_D)^\text{sf} \subset H^0(X, B_D)$ for the set of elements of square-free discriminant $\Delta \in H^0(X, \cO_X(24D))$. Then (Corollary \ref{cor_moduli_of_pointed_curves}) there is a surjection $H^0(X, B_D)^\text{sf} \to \cX_D$, the fibre above a given isomorphism class  $[(E, P, Q)]$  having cardinality equal to $\bbF_q^\times \cdot |\Aut(E, P, Q)|^{-1}$. If $g = [(g_v)_v] \in \cY_G$, then we write $H^0(X, \cV_g(D))^\text{sf} \subset H^0(X, \cV_g(D))$ for the pre-image of $H^0(X, B_D)^\text{sf}$. We also write $H^0(X, \cV_g(D))^{\text{sf,nt}} \subset H^0(\cV_g(D))^\text{sf}$ for the set of elements of $ H^0(\cV_g(D))^\text{sf}$ which are non-trivial when viewed inside $V(K)$ (in the sense of  Lemma \ref{lem_trivial_orbits}).
\begin{proposition} \label{prop_existence_and_equality_of_local_squarefree_densities}
Let $g = (g_v)_v \in G(\bbA_K)$.
\begin{enumerate}
\item The limit 
\[ \delta_B = \lim_{\deg D \to \infty} \frac{|H^0(X, B_D)^\text{sf}|}{|H^0(X, B_D)|} \]
exists and is strictly positive.
\item The limit
\[ \delta_V = \lim_{\deg D \to \infty} \frac{|H^0(X, \cV_g(D))^\text{sf}|}{|H^0(X, \cV_g(D))|} \]
exists and is strictly positive, and does not depend on $g$.
\item We have $ \int_{g \in G(\widehat{\cO}_K)} \,d \tau_G \delta_B = q^{12(g_X-1)} \delta_V $, where $\tau_G$ denotes the Tamagawa measure on $G(\bbA_K)$.
\end{enumerate} 
\end{proposition}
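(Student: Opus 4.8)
The plan is to reduce all three assertions to a place-by-place computation: parts (1) and (2) by a square-free sieve, and part (3) by a measure comparison along the quotient map $\pi \colon V \to B$. Throughout I would fix nowhere-vanishing $\bbF_q$-rational top-degree forms $\omega_B$ on $B$ and $\omega_V$ on $V$, normalised so that $|\omega_B|_v$ and $|\omega_V|_v$ give $B(\cO_{K_v})$ and $V(\cO_{K_v})$ total mass $1$ for every place $v$, fix the invariant form $\omega_G$ on $G$ from the introduction, and write $c_{B,v}$, $c_{V,v}$ for the $|\omega_B|_v$- and $|\omega_V|_v$-measures of the loci $\{b \in B(\cO_{K_v}) : \ord_{K_v}\Delta(b) \le 1\}$ and $\{x \in V(\cO_{K_v}) : \ord_{K_v}\Delta(\pi(x)) \le 1\}$.

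For parts (1) and (2), the first step is the standard fact that for $\deg D \gg 0$ (depending on any fixed finite set $T$ of places and any $n$) the reduction maps $H^0(X, B_D) \to \prod_{v \in T} B(\cO_{K_v}/\varpi_v^n)$ and $H^0(X, \cV_g(D)) \to \prod_{v \in T} \cV_g(D) \otimes (\cO_X/\varpi_v^n)$ are surjective with fibres of equal cardinality; this is Riemann--Roch together with the vanishing of the relevant $H^1$, and for $\cV_g$ one uses that all Harder--Narasimhan slopes of $\cV_g(D)$ grow with $\deg D$. Hence any condition imposed at finitely many places has density equal to the product of the local probabilities. The square-free condition is a condition at \emph{all} places, but the locus where $\Delta$ vanishes to order $\geq 2$ at a fixed $v$ has codimension $\geq 2$, so has local probability $O(q_v^{-2})$; since $\sum_v q_v^{-2}$ converges, I would run the inclusion--exclusion/Ekedahl sieve (compare \cite[\S 3.2]{Bha15} and \cite[\S 5]{Tho14}) to conclude $\delta_B = \prod_v c_{B,v}$ and $\delta_V = \prod_v c_{V,v}$, each factor being $1 - O(q_v^{-2}) > 0$, so the products converge and are strictly positive. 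Finally $\delta_V$ is independent of $g$ because $c_{V,v}$ is computed on the translate $g_v V(\cO_{K_v})$, translation by $g_v \in G(K_v)$ preserves $|\omega_V|_v$ (as $G$ acts through $\SL(V)$) and preserves the square-free locus (as $\Delta$ is $G$-invariant).

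For part (3), I would compare $c_{V,v}$ with $c_{B,v}$ through $\pi$. Since $\dim V = \dim G + \dim B$ and $Z_G(\kappa_b)$ is $0$-dimensional for $b \in B^{\mathrm{rs}}$ (Proposition \ref{prop_existence_of_kostant_section}), the orbit map $G \times \kappa \to V$ is generically \'etale, and the Jacobian comparing $\omega_V$ with $\pi^\ast \omega_B \wedge \omega_{V/B}$ --- $\omega_{V/B}$ being the relative top-degree form induced by $\omega_G$ along this orbit map --- is $G$-invariant and $\bbG_m$-homogeneous of weight $0$, hence a rational function on $B$ of weight $0$ regular off the irreducible divisor $\{\Delta = 0\}$, that is, a nonzero constant. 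Next, for a place $v$ and $b \in B(\cO_{K_v})$ with $\ord_{K_v}\Delta(b) \le 1$, Theorem \ref{thm_properties_of_integral_square_free_orbits}(2),(3) shows that $V_b(\cO_{K_v})$ is a disjoint union of $|J_b(K_v)/2J_b(K_v)|$ orbits of $G(\cO_{K_v})$, each a torsor over its image under $Z_{G_{\cO_{K_v}}}(\kappa_b)$; since $\cJ_b[2]$ is finite \'etale over $\cO_{K_v}$ ($2$ being a unit) and $J_b/K_v$ is an elliptic curve, $|Z_{G_{\cO_{K_v}}}(\kappa_b)(\cO_{K_v})| = |J_b(K_v)[2]| = |J_b(K_v)/2J_b(K_v)|$, so the orbit count cancels the stabilizer order and $\int_{V_b(\cO_{K_v})} |\omega_{V/B}|_v = \int_{G(\cO_{K_v})} |\omega_G|_v$. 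Applying Fubini for $\pi$ over the square-free locus of $B(\cO_{K_v})$ (on which every fibre lies in $V^{\mathrm{sf}}$) gives a clean local identity relating $c_{V,v}$, $c_{B,v}$ and $\int_{G(\cO_{K_v})}|\omega_G|_v$. Multiplying over all $v$ and feeding in Weil's conversion formula $\tau_G = q^{\dim G(1-g_X)}[\prod_v \int_{G(\cO_{K_v})}|\omega_G|_v]\mu_G$ (with $\dim G = 12$ and $\mu_G(G(\widehat{\cO}_K)) = 1$), together with the Riemann--Roch computation that $\chi(\cV_g(D))$ and $\chi(B_D)$ differ by $\dim G \cdot (1-g_X)$, produces the asserted identity with the genus factor $q^{12(g_X-1)}$.

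I expect the main obstacle to lie in part (3), specifically in the bookkeeping of the global normalisations: one must be sure the Jacobian above really is a unit (so that no residual factor of $|\Delta|_v$ creeps into the local comparison) and then assemble the various constants --- the normalisations of $\omega_B$, $\omega_V$, $\omega_G$, the Euler characteristics from Riemann--Roch, and the $q^{\dim G(1-g_X)}$ of Weil's formula --- so that they combine to precisely $q^{12(g_X-1)}$ and nothing more; the convergence of $\prod_v \int_{G(\cO_{K_v})}|\omega_G|_v$, essentially a product of local zeta factors for the semisimple group $G$, is what keeps this manipulation finite. The sieve in parts (1) and (2) is routine by now but still requires the uniform-in-$D$ tail bound that makes the truncation in the sieve go through.
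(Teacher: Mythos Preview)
Your proposal is correct and matches the paper's strategy closely for parts (1) and (2): the paper likewise reduces to the product of local densities $\delta_B = \prod_v(1-\alpha_v)$ and $\delta_V = \prod_v(1-\beta_v)$, invoking Poonen's square-free sieve \cite{Poo03} (via \cite{Ngo14}) rather than spelling out the Ekedahl argument, but the content is the same.

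For part (3) the two arguments diverge in their choice of local parameterisation. The paper does \emph{not} fibre $V$ over $B$ via the Kostant section; instead it runs a Weyl-integration-type formula through Cartan subspaces. Concretely, it fixes representatives $\frc_1,\dots,\frc_s$ of the $G(K_v)$-conjugacy classes of Cartan subspaces in $V_{K_v}$, proves (by the argument of \cite[Proposition~2.13]{Tho14}) that the action map $\mu_{\frc_i}\colon G\times\frc_i\to V$ satisfies $\mu_{\frc_i}^\ast\omega_V = \lambda\,\omega_G\wedge\pi|_{\frc_i}^\ast\omega_B$ with $\lambda\in\bbF_q^\times$ independent of $i$, and then unfolds $\int_{V(\cO_{K_v})}f\,|\omega_V|_v$ as a sum over $i$ of integrals over $G(K_v)\times\frc_i$, divided by $|N_G(\frc_i)(K_v)|$. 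Theorem~\ref{thm_properties_of_integral_square_free_orbits} collapses the $G(K_v)$-integral to a $G(\cO_{K_v})$-integral, and the residual combinatorial identity $\sum_i |\frc_{i,b}(K_v)\cap\frc_i^0|\cdot|N_G(\frc_i)(K_v)|^{-1}=1$ is exactly your orbit-versus-stabilizer cancellation $|J_b(K_v)/2J_b(K_v)| = |J_b(K_v)[2]|$, phrased on the Cartan side. Your Kostant-section route is equally valid --- indeed the constancy of your Jacobian follows cleanly because the orbit map $G\times\kappa\to V^{\text{reg}}$ is \'etale over \emph{all} of $\kappa\cong\bbA^4$, not merely over $\kappa^{\text{rs}}$, so the ratio is a nowhere-vanishing regular function on affine space --- but note that your aside ``$\cJ_b[2]$ is finite \'etale over $\cO_{K_v}$'' is false when $\ord_{K_v}\Delta(b)=1$; the conclusion $|Z_G(\kappa_b)(\cO_{K_v})|=|J_b(K_v)[2]|$ still holds, but it comes from part (3) of Theorem~\ref{thm_properties_of_integral_square_free_orbits} rather than from finiteness of the group scheme.
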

\begin{proof}
If $v$ is a place of $K$, define
\[ \alpha_v = \frac{ |\{ x \in B(\cO_{K_v} / (\varpi_v^2)) \mid \Delta(x) \equiv 0 \text{ mod }\varpi_v^2 \}| }{ q_v^8 } \]
and
\[  \beta_v = \frac{ |\{ x \in V(\cO_{K_v} / (\varpi_v^2)) \mid \Delta(x) \equiv 0 \text{ mod }\varpi_v^2 \}| }{ q_v^{32} }. \]
In \cite[\S 5.1]{Ngo14} it is proved using results of Poonen \cite{Poo03} that the limit $\delta_V$ exists and equals $\prod_v (1 - \beta_v)$. A similar argument using the results of \cite{Poo03} shows that the limit $\delta_B$ exists and equals $\prod_v (1 - \alpha_v)$. It is easy to see that both of these products are strictly positive. To finish the proof of the proposition, we need to show that $ \int_{g \in G(\widehat{\cO}_K)} \,d \tau_G \delta_B = q^{12(g_X-1)} \delta_V $, or even (using the definition of the Tamagawa measure) that $ \int_{g \in G(\cO_{K_v})} | \omega_G |_v  (1 - \alpha_v) =(1 - \beta_v)$ for each place $v$ of $K$, $\omega_G$ being an invariant differential form of top degree on $G$ (over $\bbF_q$). We will establish this using an integral formula.

Let $\omega_V$ and $\omega_G$ be invariant differential forms of top degree on $V$ and $G$, respectively. Let $\omega_B = dp_2 \wedge dp_4 \wedge dq_4 \wedge dp_6$, a differential form of top degree on $B$. Let $\varphi : B(K_v) \to \bbR$ denote the characteristic function of the open subset of $b \in B(\cO_{K_v})$ where $\ord_{K_v} \Delta(b) \leq 1$. Let $f : V(K_v) \to \bbR$ denote the characteristic function of the open subset of $x \in V(\cO_{K_v})$ where $\ord_{K_v} \Delta(x) \leq 1$. Then we must show the identity
\[\int_{g \in G(\cO_{K_v})} | \omega_G |_v \int_{b \in B(K_v)} \varphi(b) \, | \omega_B |_v =  \int_{x \in V(K_v)} f(x) \, | \omega_V |_v  . \]
If $\frc \subset V_{K_v}$ is a Cartan subspace, we write $\mu_\frc : G_{K_v} \times \frc \to V_{K_v}$ for the action map. Exactly the same argument as in \cite[Proposition 2.13]{Tho14} shows that for any Cartan subspace $\frc \subset V_{K_v}$, we have an identity 
\[ \mu_\frc^\ast \omega_V = \lambda \omega_G \wedge \pi|_\frc^\ast \omega_B \]
for some scalar $\lambda \in \bbF_q^\times$ which is independent of the choice of Cartan subspace.

Let $\frc_1, \dots, \frc_s \subset V_{K_v}$ denote representatives for the distinct $G(K_v)$-conjugacy classes of Cartan subspaces. Each element $v \in V^\text{rs}(K_v)$ is contained in a unique Cartan subspace, so we obtain an identity
\[ \int_{x \in V(\cO_{K_v})} f(x) \, | \omega_V |_v = \sum_{i=1}^s \int_{(g, c_i) \in G(K_v) \times \frc_i} \frac{f(g c_i)}{N_G(\frc_i)(K_v)} \, | \lambda |_v  | \omega_G \wedge \pi|_\frc^\ast \omega_B |_v. \]
Let $\frc_i^0 = \frc_i \cap [ G(K_v) \cdot V(\cO_{K_v}) ]$, an open subset of $\frc_i$. It follows from Theorem \ref{thm_properties_of_integral_square_free_orbits} and the invariance of the measure $ |\omega_G |_v$ that this last integral is equal to
\[ \sum_{i=1}^s \int_{g \in G(\cO_{K_v})} | \omega_G |_v \int_{c_i \in \frc_i} \frac{\varphi(\pi(c_i))}{N_G(\frc_i)(K_v)} | \lambda |_v |\pi|_\frc^\ast \omega_B |_v \]
\[ =  \sum_{i=1}^s \int_{g \in G(\cO_{K_v})} | \omega_G |_v  | N_G(\frc_i)(K_v) |^{-1} \int_{b \in B(K_v)} \varphi(b) |\frc_{i, b}(K_v) \cap \frc_i^0| |\omega_B|_v. \]
To finish the proof, we therefore just need to show that if $b \in B(\cO_{K_v})$ satisfies $\ord_K \Delta(b) \leq 1$, then
\[ \sum_{i=1}^s|\frc_{i, b}(K_v) \cap \frc_i^0| \times |  N_G(\frc_i)(K_v) |^{-1} = 1. \]
The left-hand side counts the number of $G(K_v)$-orbits in $V_b(K_v)$ which have an integral representative, each orbit being weighted by $|Z_G(\kappa_b)(K_v)|^{-1}$. The total number of orbits equals $| J_b(K_v) / 2 J_b(K_v)  |= |J_b(K_v)[2]|$, by Theorem \ref{thm_properties_of_integral_square_free_orbits}. This quantity in turn is equal to $|Z_G(\kappa_b)(K_v)|$, by Theorem \ref{thm_identification_of_stabilizer_and_2-torsion}. This completes the proof.
\end{proof}
We now come to the first main theorem of this paper. If $D$ is a divisor on $X$ and $(E, P, Q) \in \cX_D$, we write $A_{(E, P, Q)} \subset \Sel_2(E)$ for the trivial subgroup generated by the classes of $P$ and $Q$, and $\Sel_2(E)^\text{nt} = \Sel_2(E) - A_{(E, P, Q)}$ for its complement.
\begin{theorem}\label{thm_average_of_non_trivial_selmer}
The limit \[ \lim_{\deg D \to \infty} \sum_{(E, P, Q) \in \cX_D} \frac{  |\Sel_2(E)^{\text{nt}}| \cdot | \Aut(E, P, Q) |^{-1} \cdot | E(K)[2] |^{-1}} { |\cX_D| } \]
exists and equals 8.
\end{theorem}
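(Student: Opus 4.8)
The plan is to unfold the average twice and then evaluate an integral over $\cY_G$. First I would remove the weight $|E(K)[2]|^{-1}$, which is identically $1$ here: the minimal models of curves in $\cX_D$ have only fibres of type $I_0$ or $I_1$, so $E(K)[2]=0$. Then, using Corollary \ref{cor_moduli_of_pointed_curves} to pass from $\cX_D$ to $H^0(X,B_D)^{\text{sf}}$, Corollary \ref{cor_orbits_corresponding_to_selmer_group_exist} together with Theorem \ref{thm_invariant_partition_of_selmer_elements} to identify the non-trivial $x\in\Sel_2(J_b)$ with $\inv(x)=[g]$ with the $\Aut(F_g)$-orbits of sections $s\in H^0(X,\cV_g(D))^{\text{sf,nt}}$ satisfying $\pi(s)=b$, and the observation that the stabiliser of such an $s$ lies inside $Z_G(x)(K)$---a Galois-module form of $J_b[2]\cong Z_G(\kappa_b)$ (inner twisting of an abelian group scheme by a cocycle valued in itself is trivial), hence trivial---I would arrive at
\[ \sum_{(E,P,Q)\in\cX_D}\frac{|\Sel_2(E)^{\text{nt}}|}{|\Aut(E,P,Q)|\,|E(K)[2]|}=\frac{1}{|\bbF_q^\times|}\int_{F\in\cY_G}\bigl|H^0(X,\cV_F(D))^{\text{sf,nt}}\bigr|\,d\nu_G, \]
with $|\cX_D|=\tfrac{1}{|\bbF_q^\times|}|H^0(X,B_D)^{\text{sf}}|(1+o(1))$, the error only from the asymptotically negligible locus of pointed curves with extra automorphisms.

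For the main term I would isolate the \emph{stable} torsors $F\in\bigsqcup_P\cY_{G,P}(D)^{>2g_X-2}$ (canonical reduction $F_P$ with $\deg D+\min_{a\in\Phi_V}\langle\sigma_{F_P},a\rangle>2g_X-2$). On these, Corollary \ref{cor_reducible_orbits_in_canonical_reduction}(3) (via Theorem \ref{thm_filtration_of_associated_bundle}, valid since $\cha\bbF_q\geq19$) gives $|H^0(X,\cV_F(D))|=q^{\dim V(1-g_X+\deg D)}$ exactly, Proposition \ref{prop_existence_and_equality_of_local_squarefree_densities}(2) gives the square-free density $\delta_V$ with error uniform in $F$, and Lemma \ref{lem_geography_of_trivial_orbits} shows ``sf'' $=$ ``sf,nt'' there (all trivial orbits have invariant in $\cY_{G,B}(D)^{<0}$). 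Since $\nu_G(\cY_G)<\infty$ and every fixed $F$ is eventually stable, the stable contribution to the average would be asymptotic to $\delta_V q^{\dim V(1-g_X+\deg D)}\nu_G(\cY_G)/|H^0(X,B_D)^{\text{sf}}|$; as $h^0(X,\cV_F(D))-h^0(X,B_D)=12(1-g_X)$ on the stable locus and $|H^0(X,B_D)^{\text{sf}}|\sim\delta_B q^{h^0(X,B_D)}$ by Riemann--Roch, Proposition \ref{prop_existence_and_equality_of_local_squarefree_densities}(3)---whose place-by-place content is $\int_{G(\cO_{K_v})}|\omega_G|_v(1-\alpha_v)=1-\beta_v$---together with the definition of $\tau_G$ turns this limit into the Tamagawa volume $\tau_G(\cY_G)$, which equals the Tamagawa number $\tau(G)=8$.

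The hard part will be \emph{cutting off the cusp}: showing $\bigsqcup_P(\cY_{G,P}(D)^{<0}\sqcup\cY_{G,P}(D)^{\text{sp}})$ contributes $o(q^{\dim V(1-g_X+\deg D)})$ to $\int_{\cY_G}|H^0(X,\cV_F(D))^{\text{sf,nt}}|\,d\nu_G$. For $P$ with $\dim Z_0(L_P)\leq2$, Corollary \ref{cor_parabolic_reducibility_from_canonical_reduction} removes all square-free sections over $\cY_{G,P}(D)^{<0}$, and over $\cY_{G,P}(D)^{\text{sp}}$ the crude bound $|H^0(X,\cV_F(D))|\leq q^{\dim V(1+\deg D)}$ (Corollary \ref{cor_reducible_orbits_in_canonical_reduction}(2)) against the volume bound $\nu_G(\{F:\sigma_{F_P}=\sigma\})\leq Cq^{-\langle\sigma,\delta_P\rangle}$ (Lemma \ref{lem_integration_of_boundary_terms}) suffices, since on that stratum $\langle\sigma,\delta_P\rangle$ grows linearly in $\deg D$ while the count-to-main-term ratio is only the bounded factor $q^{\dim V\cdot g_X}$. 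The genuinely delicate case is $P=B$ or $\dim Z_0(L_P)=3$ over $\cY_{G,P}(D)^{<0}$, where I would argue by depth: once $M(\sigma_{F_P})=\{a\in\Phi_V:\langle\sigma_{F_P},a\rangle+\deg D<0\}$ contains one of the subsets in \eqref{eqn_parabolic_reducibility_sets}--\eqref{eqn_weierstrass_reducibility_sets}, Corollary \ref{cor_identification_of_reducible_or_weierstrass_orbits} forces every section to have vanishing discriminant or be trivial, so such $F$ do not contribute to the ``nt'' count, and a finite inspection of the five parabolics involved shows this occurs once $\sigma_{F_P}$ passes a bounded distance into the relevant chamber; on the bounded remaining ``collar'' I would feed the sharper estimate $|H^0(X,\cV_F(D))|\leq q^{\dim V(1+\deg D)-|M|(1+\deg D+\sum_{a\in M}\langle\sigma_{F_P},a\rangle)}$ (Corollary \ref{cor_reducible_orbits_in_canonical_reduction}(1)) into Lemma \ref{lem_integration_of_boundary_terms} and check that the saved exponent outpaces $\langle\sigma,\delta_P\rangle$ throughout. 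This last power count, the precise analogue of the Bhargava--Shankar ``cut off the cusp'' step (compare \cite[\S5]{Bha15} and \cite[\S5]{Tho14}), is the main obstacle, as it is where the combinatorics of the Hasse diagram of $\Phi_V$, the characters $\delta_P$, and the partial order all have to cooperate; once it is in place, combining with the previous paragraph gives the stated limit $\tau_G(\cY_G)=\tau(G)=8$.
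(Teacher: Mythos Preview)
Your proposal is correct and follows essentially the same route as the paper: unfold via Corollary~\ref{cor_moduli_of_pointed_curves} and Theorem~\ref{thm_invariant_partition_of_selmer_elements}, stratify $\cY_G$ by canonical reduction, extract the main term on $\cY_{G,P}(D)^{>2g_X-2}$ by dominated convergence and Proposition~\ref{prop_existence_and_equality_of_local_squarefree_densities}, and kill the rest using Corollary~\ref{cor_parabolic_reducibility_from_canonical_reduction} and Corollary~\ref{cor_identification_of_reducible_or_weierstrass_orbits}. One caution on phrasing: the ``collar'' where $M(\sigma_{F_P})$ avoids all the sets in \eqref{eqn_parabolic_reducibility_sets}--\eqref{eqn_weierstrass_reducibility_sets} is \emph{not} bounded independently of $\deg D$; rather, for each of the finitely many admissible $M\in\cC_0$ the region $\{M(\sigma)=M\}$ is an unbounded polytope moving with $\deg D$, and it is precisely your final ``saved exponent outpaces $\langle\sigma,\delta_P\rangle$'' check that handles it --- the paper carries this out explicitly by introducing, for each $M\in\cC_0$, an auxiliary weighting $p:\lambda(M)\to\bbR_{\geq 0}$ with $\sum p(a)<|M|$ and $n_i\bigl(\sum_{\lambda(M)}p(a)a-\sum_M a-\delta_B\bigr)>0$ for all $i$, then tabulating such $p$ for the eleven sets in $\cC_0$.
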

\begin{proof}
By Corollary \ref{cor_moduli_of_pointed_curves}, we have
\[ \lim_{\deg D \to \infty} \frac{| \cX_D |}{H^0(X, B_D)^\text{sf}} = (q-1)^{-1}. \]
By Theorem \ref{thm_invariant_partition_of_selmer_elements}, we have
\[ (q-1) \sum_{(E, P, Q) \in \cX_D} \frac{ |\Sel_2(E)^{\text{nt}}| \cdot | \Aut(E, P, Q) |^{-1} \cdot | E(K)[2] |^{-1}} { | H^0(X, B_D)^\text{sf} | } = \int_{g \in \cY_G} \frac{|H^0(X, \cV_g(D))^{\text{sf, \text{nt}}}|}{|H^0(X, B_D)^{\text{sf}}|} \, d \nu_G \]
\[ = \int_{\cY_G} \frac{ |H^0(X, \cV_g(D))^{\text{sf, nt}}|}{|H^0(X, B_D)|} \times \frac{|H^0(X, B_D)|}{|H^0(X, B_D)^{\text{sf}}|} \, d \nu_G,\]
hence
\[  \lim_{\deg D \to \infty}\sum_{(E, P, Q) \in \cX_D} \frac{|\Sel_2(E)^{\text{nt}}| \cdot | \Aut(E, P, Q) |^{-1} \cdot | E(K)[2] |^{-1}} {|\cX_D| } = \delta_B^{-1}\times  \lim_{\deg D \to \infty} \int_{\cY_G} \frac{|H^0(X, \cV_g(D))^{\text{sf, nt}}|}{|H^0(X, B_D)|} \, d \nu_G. \]
We would like to compute the pointwise limit of the integrand and then interchange the order of the integral and the limit. This can be justified only after a process of `cutting off the cusp'. Applying the decomposition of \S \ref{sec_semistability_and_integration}, we get
\[ \int_{\cY_G} \frac{|H^0(X, \cV_g(D))^\text{sf, nt}|}{|H^0(\cX, B_D)|} \, d \nu_G = \sum_P \int_{ \cY_{G, P}}  \frac{|H^0(X, \cV_g(D))^\text{sf, nt}|}{|H^0(X, B_D)|} \, d \nu_G \]
\[ = \sum_P \left[ \int_{ \cY_{G, P}(D)^{> 2g_X - 2}}  \frac{|H^0(X, \cV_g(D))^\text{sf, nt}|}{|H^0(X, B_D)|} \, d \nu_G  + \int_{ \cY_{G, P}^\text{sp}}  \frac{|H^0(X, \cV_g(D))^\text{sf, nt}|}{|H^0(X, B_D)|} \, d \nu_G + \int_{ \cY_{G, P}^{< 0}}  \frac{|H^0(X, \cV_g(D))^\text{sf, nt}|}{|H^0(X, B_D)|} \, d \nu_G \right] , \]
where the sums are over the set of standard parabolic subgroups of $G$. (We recall that these are the parabolics containing the Borel subgroup $B \subset G$ corresponding to the set $R^- = \{ - a_1, -a_2, -a_3, a_4 \}$ of simple roots of $G$.) Applying Lemma \ref{lem_geography_of_trivial_orbits}, we see that when $\deg D > 0$, this equals
\[ \sum_P \left[ \int_{ \cY_{G, P}(D)^{> 2g_X - 2}}  \frac{|H^0(X, \cV_g(D))^\text{sf}|}{|H^0(X, B_D)|} \, d \nu_G  + \int_{ \cY_{G, P}^\text{sp}}  \frac{|H^0(X, \cV_g(D))^\text{sf}|}{|H^0(X, B_D)|} \, d \nu_G + \int_{ \cY_{G, P}^{< 0}}  \frac{|H^0(X, \cV_g(D))^\text{sf, nt}|}{|H^0(X, B_D)|} \, d \nu_G \right]. \]
We will see that the terms corresponding to $\cY_{G, P}(D)^{> 2g_X - 2}$ dominate, while the others vanish in the limit. Note that for any $g\in \cY_{G, P}$, we have $g \in \cY_{G, P}(D)^{> 2g_X - 2}$ for all divisors $D$ of sufficiently large degree (depending on $g$). For divisors of degree greater than $2g_X - 2$ we have $|H^0(X, B_D)| = q^{4(1-g_X) + 16 \deg D}$, and an application of Corollary \ref{cor_vanishing_sections_of_lowest_slope_part} shows that such $D$ we have
\[ \int_{ \cY_{G, P}(D)^{> 2g_X - 2}}  \frac{|H^0(X, \cV_g(D))^\text{sf}|}{|H^0(X, B_D)|} \, d \nu_G = \int_{ \cY_{G, P}(D)^{> 2g_X - 2}}\frac{|H^0(X, \cV_g(D))|}{|H^0(X, B_D)|}\frac{|H^0(X, \cV_g(D))^\text{sf}|}{|H^0(X, \cV_g(D))|} \, d \nu_G. \]
\[ = q^{12(1-g_X)} \int_{ \cY_{G, P}(D)^{> 2g_X - 2}}
\frac{|H^0(X, \cV_g(D))^\text{sf}|}{|H^0(X, \cV_g(D))|} \, d \nu_G.\]
The integrand in this expression is bounded by 1, and as $\deg D \to \infty$ its value tends to a limit $\delta_V$ which is independent of the choice of $g$, by Proposition \ref{prop_existence_and_equality_of_local_squarefree_densities}. Applying the dominated convergence theorem, we find that 
\[ \lim_{\deg D \to \infty}\int_{ \cY_P(D)^{> 2g_X - 2}}  \frac{|H^0(X, \cV_g(D))^\text{sf}|}{|H^0(X, B_D)|} \, d \nu_G = q^{12(1-g_X)} \delta_V \int_{ \cY_{G, P}} \, d \nu_G. \]
To take care of the contribution in special range, we calculate using Corollary \ref{cor_reducible_orbits_in_canonical_reduction} and Lemma \ref{lem_integration_of_boundary_terms}:
\[ \int_{ \cY_{G, P}^\text{sp}}  \frac{|H^0(X, \cV_g(D))^\text{sf}|}{|H^0(X, B_D)|} \, d \nu_G \leq  \int_{ \cY_{G, P}^\text{sp}}  \frac{|H^0(X, \cV_g(D))|}{|H^0(X, B_D)|} \, d \nu_G = O\left( \sum_{\substack{\sigma \in \Lambda_P^\text{pos} \\ \deg D + \langle \sigma, \alpha_0 \rangle \in [0, 2g_X - 2]}} q^{-\langle \sigma, \delta_P \rangle}\right), \]
where the implied constant depends only on $X$. This tends to 0 as $\deg D \to \infty$. To take care of the remaining contributions, we note that Corollary \ref{cor_parabolic_reducibility_from_canonical_reduction} implies that
\[ \int_{ \cY_{G, P}^{< 0}}  \frac{|H^0(X, \cV_g(D))^\text{sf, nt}|}{|H^0(X, B_D)|} \, d \nu_G = 0 \]
unless $P = B$ or the Levi quotient of $P$ has semisimple rank 1. In these cases we will show that
\begin{equation}\label{eqn_limit_of_boundary_term_equals_zero} \lim_{\deg D \to \infty}\int_{ \cY_{G, P}^{< 0}}  \frac{|H^0(X, \cV_g(D))^\text{sf, nt}|}{|H^0(X, B_D)|} \, d \nu_G= 0. 
\end{equation}
Let us first treat the (harder) case of $P = B$. Let $\cC$ denote the set of non-empty subsets $M \subset \Phi_V$ which are closed under the relation $\geq$: i.e.\ if $a \in \Phi_V$, $b \in M$, and $a \geq b$, then $a \in M$. Note that $\alpha_0 \in M$ for all $M \in \cC$. Then we have $\cY_{G, B}(D)^{<0} = \sqcup_{M \in \cC} \cY_{G, B}(D)^{<0, M}$, where we define $\cY_{G, B}(D)^{<0, M}$ to be the set of $G$-torsors $F \in \cY_{G, B}(D)^{<0}$ such that for $a \in \Phi_V$, the slope $\sigma_{F_B}$ of the canonical reduction $F_B$ satisfies $\langle \sigma_{F_B}, a \rangle + D < 0$ if and only if $a \in M$. This allows us to decompose
\begin{equation}\label{eqn_hasse_decomposition_of_borel_boundary_term} \int_{ \cY_{G, P}^{< 0}}  \frac{|H^0(X, \cV_g(D))^\text{sf, nt}|}{|H^0(X, B_D)|} \, d \nu_G = \sum_{M \in \cC} \int_{ \cY_{G, P}^{< 0, M}}  \frac{|H^0(X, \cV_g(D))^\text{sf, nt}|}{|H^0(X, B_D)|} \, d \nu_G. 
\end{equation}
Let $\cC_0 \subset \cC$ denote the set of subsets $M \in \cC_0$ not containing any of the sets $S$ appearing in the statement of Corollary \ref{cor_identification_of_reducible_or_weierstrass_orbits}; then the summand in (\ref{eqn_hasse_decomposition_of_borel_boundary_term}) corresponding to $M \in \cC$ can be non-zero only if $M \in \cC_0$. To show (\ref{eqn_limit_of_boundary_term_equals_zero}) in  case $P = B$, it is therefore enough to show that the equality
\begin{equation}\label{eqn_vanishing_of_borel_boundary_term}
\lim_{\deg D \to \infty}\int_{ \cY_{G, B}^{< 0, M}}  \frac{|H^0(X, \cV_g(D))|}{|H^0(X, B_D)|} \, d \nu_G = 0. 
\end{equation}
holds for each $M \in \cC_0$. If $M \in \cC$ and $ \cY_{G, B}^{<0, M}$, then Corollary \ref{cor_reducible_orbits_in_canonical_reduction} implies that we have
\[  \frac{|H^0(X, \cV_g(D))|}{|H^0(X, B_D)|} = O(q^{-|M| \deg D - \langle \sigma,  \sum_{a \in M} a \rangle}), \]
where the implied constant depends only on $X$. Combining this with Lemma \ref{lem_integration_of_boundary_terms}, we get for any $M \in \cC$:
\begin{equation}\label{eqn_borel_boundary_term} \int_{ \cY_{G, B}^{< 0, M}}  \frac{|H^0(X, \cV_g(D))|}{|H^0(X, B_D)|} \, d \nu_G = O\left( \sum_{\substack{\sigma \in \Lambda_B^\text{pos} \\ \forall a \in M, \langle \sigma, a \rangle + \deg D < 0 \\
\forall a \in \Phi_V - M, \langle \sigma, a \rangle + \deg D \geq 0}} q^{ - |M| \deg D - \langle \sigma, \delta_B + \sum_{a \in M} a \rangle}\right), 
\end{equation}
where the implied constant again depends only on $X$. If $a \in \lambda(M)$ then $q^{\langle \sigma, a \rangle + \deg D} \geq 1$. It follows that for any function $p : \lambda(M) \to \bbR_{\geq 0}$, (\ref{eqn_borel_boundary_term}) is bounded above by a constant multiple of
\[ \sum_{\substack{\sigma \in \Lambda_B^\text{pos} \\ \forall a \in M, \langle \sigma, a \rangle + \deg D < 0 \\
\forall a \in \Phi_V - M, \langle \sigma, a \rangle + \deg D \geq 0}} q^{\deg D (\sum_{a \in \lambda(M)} p(a) - |M| ) + \langle \sigma,  \sum_{a \in \lambda(M)} p(a) a - \sum_{a \in M} a  - \delta_B\rangle}\]
\[ \leq q^{\deg D (\sum_{a \in \lambda(M)} p(a) -|M |)} \sum_{\sigma \in \Lambda_B^\text{pos} }  q^{\langle \sigma, \sum_{a \in \lambda(M)} p(a) a - \sum_{a \in M} a - \delta_B \rangle}. \]
This last expression tends to 0 as $\deg D$ tends to infinity provided the function $p$ is chosen so that the following conditions are satisfied:
\begin{itemize}
\item $|M| > \sum_{a \in \lambda(M)} p(a)$.
\item Define $w(M) =  - \sum_{a \in M} a - \delta_B$ and $w(M, p) =  \sum_{a \in \lambda(M)} p(a) a - \sum_{a \in M} a - \delta_B \in X^\ast(T)_\bbR$. Then $n_i(w(M, p)) > 0$ for each $i = 1, \dots, 4$.
\end{itemize}
We show that we can find such a function $p$ simply by exhibiting one for each possible choice of $M \in \cC_0$ in the following table (the weights being labelled as in \S \ref{sec_preliminaries}):
\begin{center}
\begin{tabular}{|c|c|c|cccc|c|cccc|}
\hline
$M$ & $\lambda(M)$ & $|M|$ & \multicolumn{4}{|c|}{$2w(M)$}&  $p$ & \multicolumn{4}{|c|}{$2w(M, p)$} \\
\hline
  1  &  2, 3, 4, 5  & 1  &  1  &  1  &  1 & 1  & $(0,0,0,0)$ & 1 &1&1&1\\
  1,2  &  3,4,5  & 2   &  2  &  0  & 0  &  0& $(0.5,0.5,0.5)$ & 3.5 &0.5&0.5&0.5 \\
1,3&2,4,5&2&0&2&0&0& $(0.5,0.5,0.5)$ & 0.5 &3.5&0.5&0.5\\
1,4&2,3,5&2 &0&0&2&0&$(0.5,0.5,0.5)$&0.5 &0.5&3.5&0.5\\
1,5&2,3,4&2&0&0&0&2&$(0.5,0.5,0.5)$& 0.5 &0.5&0.5&3.5\\
1,2,3&4,5,6&3&1&1&-1&-1&$(0.5,0.5,1.5)$&0.5 &0.5&0.5&0.5\\
1,2,4&3,5,7&3&1&-1&1&-1&$(0.5,0.5,1.5)$&0.5 &0.5&0.5&0.5\\
1,2,5&3,4,8&3&1&-1&-1&1&$(0.5,0.5,1.5)$&0.5 &0.5&0.5&0.5\\
1,3,4&2,5,9&3&-1&1&1&-1&$(0.5,0.5,1.5)$&0.5 &0.5&0.5&0.5\\
1,3,5&2,4,10&3&-1&1&-1&1&$(0.5,0.5,1.5)$&0.5 &0.5&0.5&0.5\\
1,4,5&2,3,11&3&-1&-1&1&1 &$(0.5,0.5,1.5)$&0.5 &0.5&0.5&0.5\\
\hline
  \end{tabular}
\end{center}
  This shows that the equality (\ref{eqn_limit_of_boundary_term_equals_zero}  holds in case $P = B$. We now treat the four remaining cases. By symmetry, we can assume that $P$ is the standard parabolic subgroup of $G$ generated by $B$ and the root subgroup corresponding to the root $a_1$. Then the Levi quotient $L_P$ of $P$ is isogenous to $\SL_2$, and the same argument as above shows that we need to show that 
\begin{equation}\label{eqn_vanishing_boundary_term_for_parabolic} \lim_{\deg D \to \infty} \int_{ \cY_{G, P}^{< 0, M}}  \frac{|H^0(X, \cV_g(D))^\text{sf, nt}|}{|H^0(X, B_D)|} \, d \nu_G = 0 
\end{equation}
  for each $M \in \cC_0$. We observe that $\cY_{G, P}^{< 0, M}$ is non-empty only when $M$ satisfies the condition $a \in M \Rightarrow a' \in M$, where $a' \in \Phi_V$ is defined by $n_1(a') = - n_1(a)$, $n_i(a') = n_i(a)$ for $i = 2, 3, 4$. The only set $M \in \cC_0$ which satisfies this condition is $M = \{ 1, 2 \}$, so we are reduced finally to showing that the equality (\ref{eqn_vanishing_boundary_term_for_parabolic}) holds in the single case $M = \{ 1, 2 \}$. This can be proved using exactly the same trick as before.

Putting everything back together and applying Proposition \ref{prop_existence_and_equality_of_local_squarefree_densities}, we find
\[ \lim_{\deg D \to \infty} \sum_{(E, P, Q) \in \cX_D} \frac{|\Sel_2(E)^{\text{nt}}| \cdot | \Aut(E, P, Q) |^{-1} \cdot | E(K)[2] |^{-1}} { |\cX_D |} = \delta_B^{-1}\times \sum_P q^{12(1-g_X)} \delta_V \int_{\cY_{G, P}} \, d \nu_G \]
\[ = \int_{ G(\widehat{\cO}_K)} \, d \tau_G \int_{ G(K) \backslash G(\bbA_K)} \, d \mu_G = \int_{ G(K) \backslash G(\bbA_K)} \, d \tau_G = \tau(G), \]
the Tamagawa number of $G$. Since the fundamental group of $G$ is isomorphic to $\mu_2^3$, we have $\tau(G) = 8$. This completes the proof.
\end{proof}
\begin{corollary}\label{cor_average_2_selmer}
The limit \[ \lim_{\deg D \to \infty} \sum_{(E, P, Q) \in \cX_D}  \frac{|\Sel_2(E) | \cdot | \Aut(E, P, Q) |^{-1}  \cdot | E(K)[2] |^{-1}} { |\cX_D| } \]
exists and equals 12.
\end{corollary}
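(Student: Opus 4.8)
The plan is to reduce Corollary \ref{cor_average_2_selmer} to Theorem \ref{thm_average_of_non_trivial_selmer} by computing the size of the trivial subgroup exactly. Since $\Sel_2(E)^{\text{nt}} = \Sel_2(E) - A_{(E, P, Q)}$ is a disjoint complement, one has $|\Sel_2(E)| = |\Sel_2(E)^{\text{nt}}| + |A_{(E,P,Q)}|$, so the average appearing in the corollary is the sum of the average computed in Theorem \ref{thm_average_of_non_trivial_selmer}, which is $8$, and
\[ \lim_{\deg D\to\infty}\ \sum_{(E,P,Q)\in\cX_D}\frac{|A_{(E,P,Q)}|\cdot|\Aut(E,P,Q)|^{-1}\cdot|E(K)[2]|^{-1}}{|\cX_D|}. \]
For every $(E,P,Q)\in\cX_D$ we have $E(K)[2]=0$: the torsion of $E(K)$ embeds in the product of the fibral component groups of the minimal regular model of $E$, and by Lemma \ref{lem_characterization_of_square_free_discriminant} all these fibres are of type $I_0$ or $I_1$, hence have trivial component group. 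Thus, granting the claim that $|A_{(E,P,Q)}|=4$ for all $(E,P,Q)\in\cX_D$ as soon as $\deg D>0$, the displayed limit equals $4\lim_{\deg D\to\infty}\sum_{(E,P,Q)\in\cX_D}|\Aut(E,P,Q)|^{-1}/|\cX_D|$; by Corollary \ref{cor_moduli_of_pointed_curves} this is $4|H^0(X,B_D)^{\text{sf}}|/\big((q-1)|\cX_D|\big)$, which tends to $4$ because $|\cX_D|/|H^0(X,B_D)^{\text{sf}}|\to(q-1)^{-1}$ (as recalled in the proof of Theorem \ref{thm_average_of_non_trivial_selmer}). This yields the value $8+4=12$.

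It remains to establish the claim $|A_{(E,P,Q)}|=4$. By Theorem \ref{thm_rational_orbits_and_rational_points}, $A_{(E,P,Q)}$ is the image of the homomorphism $W_0\to H^1(K,J_b[2])$ carrying $w$ to the class of the orbit $G(K)\cdot w\kappa_b$, so it is enough to show this map is injective, and for that I will apply the invariant map $\inv$. By the computation in \S\ref{sec_global_integral_orbits} we have $\inv(w\kappa_b)=[(w\check\rho(\varpi_v^{m_v})w^{-1})_v]$; write $[g_w]$ for this class and $F_{g_w}$ for the associated $G$-torsor. Since each component $w\check\rho(\varpi_v^{m_v})w^{-1}$ lies in $T(K_v)$, the same analysis as in the proof of Lemma \ref{lem_geography_of_trivial_orbits} shows that $F_{g_w}$ has canonical reduction to $B$ with slope $\sigma_w=-(\deg D)\,w\check\rho$: indeed $\langle\sigma_w,-a_i\rangle=(\deg D)\langle w\check\rho,a_i\rangle=(\deg D)\langle\check\rho,w^{-1}a_i\rangle>0$ for each $i$, as $W_0$ permutes $\{a_1,\dots,a_4\}$ and $\langle\check\rho,a_j\rangle\in\{2,4\}$ for all $j$. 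Now $\check\rho$ pairs with the simple root $a_1$ to the value $4$ and with each of $a_2,a_3,a_4$ to the value $2$ (a direct computation using $\check\rho(t)=(t^3,t^2,t^{-2},t^{-3},t,1,1,t^{-1})$ and $a_1=ac,\ a_2=a/c,\ a_3=bd,\ a_4=b/d$); since $W_0$ acts simply transitively on $\{a_1,\dots,a_4\}$, the index $i$ for which $\langle w\check\rho,a_i\rangle=4$ determines $w$, so the cocharacters $w\check\rho$, and hence (using $\deg D>0$) the slopes $\sigma_w$, are pairwise distinct. Consequently the torsors $F_{g_w}$ have pairwise distinct canonical reductions, so by Theorem \ref{thm_existence_of_canonical_reduction} the classes $[g_w]$ are pairwise distinct; therefore $\inv$ separates the trivial orbits, $W_0\to H^1(K,J_b[2])$ is injective, and $|A_{(E,P,Q)}|=|W_0|=4$.

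This proves the claim and hence the corollary. The only step requiring a genuine computation is the identification of the canonical reduction of $F_{g_w}$ together with the verification that the cocharacters $w\check\rho$ are distinct; everything else is formal given Theorem \ref{thm_average_of_non_trivial_selmer} and the structural results of \S\ref{sec_counting_2_selmer}. (In particular this sharpens the Remark following Theorem \ref{thm_intro_theorem_1}: on the square-free family $\cX_D$ the trivial subgroup $A_{(E,P,Q)}$ has size exactly $4$ once $\deg D>0$, and not merely generically so.)
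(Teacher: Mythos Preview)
Your argument is correct and takes a genuinely different route from the paper's proof. Both proofs start from the same decomposition $|\Sel_2(E)|=|\Sel_2(E)^{\text{nt}}|+|A_{(E,P,Q)}|$ and invoke Theorem \ref{thm_average_of_non_trivial_selmer} for the first summand; they diverge in how they handle the contribution of $A_{(E,P,Q)}$. The paper shows only that $|A_{(E,P,Q)}|=4$ for a density-$1$ subset of $H^0(X,B_D)^{\text{sf}}$, by invoking the Hilbert irreducibility theorem to arrange that the Galois image in $W(G,\frz_\frh(\kappa_b))$ is the full Weyl group, and then appealing to the discussion after Theorem \ref{thm_rational_orbits_and_rational_points}. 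You instead prove the sharper statement that $|A_{(E,P,Q)}|=4$ for \emph{every} $(E,P,Q)\in\cX_D$ once $\deg D>0$, by separating the four trivial orbits via the invariant map: the explicit formula $\inv(w\kappa_b)=[(w\check\rho(\varpi_v^{m_v})w^{-1})_v]$ from \S\ref{sec_global_integral_orbits}, together with the computation $\langle\check\rho,a_1\rangle=4$ and $\langle\check\rho,a_j\rangle=2$ for $j\neq 1$, shows that the four $G$-torsors $F_{g_w}$ all have canonical reduction to $B$ with pairwise distinct slopes $-(\deg D)\,w\check\rho$, hence are pairwise non-isomorphic. Your approach thus avoids Hilbert irreducibility entirely and yields a strictly stronger intermediate result, while the paper's argument is more portable to situations where no such explicit calculation of $\inv$ is available.
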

\begin{proof}
In view of Theorem \ref{thm_average_of_non_trivial_selmer}, we just need to show that
\[ \lim_{\deg D \to \infty} \sum_{(E, P, Q) \in \cX_D} \frac{| A_{E, P, Q} | \cdot | \Aut(E, P, Q) |^{-1}  \cdot | E(K)[2] |^{-1} } {| \cX_D |} \]
exists and equals 4. Following the discussion after Theorem \ref{thm_rational_orbits_and_rational_points}, we see that it is enough to show that the limit
\[ \lim_{\deg D \to \infty} \frac{ | \{ b \in H^0(X, B_D) \cap B^\text{rs}(K) \mid \im( \Gamma_K \to W(G, \frz_\frh(\kappa_b)) ) \neq W(G, \frz_\frh(\kappa_b))  \} | }{ | H^0(X, B_D) | } \]
exists and equals 0. This is a consequence of the Hilbert irreducibility theorem. 
\end{proof}
Finally, we prove the promised generalization of Theorem \ref{thm_average_of_non_trivial_selmer}.
\begin{theorem}\label{thm_equidistribution_of_non_trivial_selmer}
Let $f : \cY_G \to \bbR$ be a bounded function. Then we have
\[ \lim_{\deg D \to \infty} \sum_{(E, P, Q) \in \cX_D} \frac{ | \Aut(E, P, Q) |^{-1} \cdot |E(K)[2]|^{-1} \sum_{x \in \Sel_2( E)^\text{nt} } f(\inv x) } { |\cX_D| } = \int_{F \in \cY_G} f(F) \, d\tau_G. \]
\end{theorem}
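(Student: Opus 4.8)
The plan is to re-run the proof of Theorem \ref{thm_average_of_non_trivial_selmer} verbatim, carrying the extra weight $f(\inv x) = f(F_g)$ through every step: the boundedness of $f$ handles the cusp estimates, and the dominated convergence theorem replaces the pointwise interchange of limit and integral used for the main term. Concretely, I would first observe that since $\inv x = [g]$ is exactly the class of $\cY_G$ at which $f$ is evaluated, the bijection of Theorem \ref{thm_invariant_partition_of_selmer_elements}, combined with Corollary \ref{cor_moduli_of_pointed_curves} and the same bookkeeping as in the proof of Theorem \ref{thm_average_of_non_trivial_selmer}, gives
\[ (q-1)\sum_{(E,P,Q)\in\cX_D}\frac{|\Aut(E,P,Q)|^{-1}\,|E(K)[2]|^{-1}\sum_{x\in\Sel_2(E)^{\text{nt}}}f(\inv x)}{|H^0(X,B_D)^{\text{sf}}|}=\int_{g\in\cY_G}f(F_g)\,\frac{|H^0(X,\cV_g(D))^{\text{sf,nt}}|}{|H^0(X,B_D)^{\text{sf}}|}\,d\nu_G. \]
Using $\lim_{\deg D\to\infty}|\cX_D|/|H^0(X,B_D)^{\text{sf}}| = (q-1)^{-1}$ and part 1 of Proposition \ref{prop_existence_and_equality_of_local_squarefree_densities}, this reduces the theorem to computing the limit of $\delta_B^{-1}\int_{g\in\cY_G}f(F_g)\,|H^0(X,\cV_g(D))^{\text{sf,nt}}|/|H^0(X,B_D)|\,d\nu_G$.

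Next I would decompose $\cY_G=\sqcup_P\cY_{G,P}$ via Theorem \ref{thm_existence_of_canonical_reduction} and each $\cY_{G,P}$ into the three slope ranges $\cY_{G,P}(D)^{>2g_X-2}$, $\cY_{G,P}(D)^{\text{sp}}$, $\cY_{G,P}(D)^{<0}$, dropping the superscript $\mathrm{nt}$ in the first two ranges once $\deg D>0$ by Lemma \ref{lem_geography_of_trivial_orbits}. For the dominant range, Corollary \ref{cor_vanishing_sections_of_lowest_slope_part} gives $|H^0(X,\cV_g(D))| = q^{\dim V(1-g_X+\deg D)}$, so exactly as before the contribution of $\cY_{G,P}(D)^{>2g_X-2}$ equals $q^{12(1-g_X)}\int_{\cY_{G,P}(D)^{>2g_X-2}}f(F_g)\,|H^0(X,\cV_g(D))^{\text{sf}}|/|H^0(X,\cV_g(D))|\,d\nu_G$; its integrand is bounded in absolute value by the constant $\sup_{\cY_G}|f|$ and converges pointwise to $f(F_g)\,\delta_V$ by Proposition \ref{prop_existence_and_equality_of_local_squarefree_densities}. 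Because $G$ is semisimple, $\nu_G(\cY_G)=\mu_G(G(K)\backslash G(\bbA_K))<\infty$, so $\sup|f|$ is $\nu_G$-integrable and dominated convergence yields the limit $q^{12(1-g_X)}\delta_V\int_{\cY_{G,P}}f(F_g)\,d\nu_G$. For the two remaining ranges I would use the crude bound $|f(F_g)\cdot(\text{integrand})|\le(\sup|f|)\cdot(\text{integrand})$ together with the nonnegative estimates already proved for Theorem \ref{thm_average_of_non_trivial_selmer}: the special range vanishes in the limit by Corollary \ref{cor_reducible_orbits_in_canonical_reduction} and Lemma \ref{lem_integration_of_boundary_terms}, while in $\cY_{G,P}(D)^{<0}$ the parabolics with $\dim Z_0(L_P)\le 2$ contribute nothing by Corollary \ref{cor_parabolic_reducibility_from_canonical_reduction}, and for $P=B$ or $L_P$ of semisimple rank $1$ the Hasse-diagram decomposition into sets $M\in\cC_0$, Corollary \ref{cor_identification_of_reducible_or_weierstrass_orbits}, and the explicit weighting functions $p$ tabulated there again force the limit to be $0$.

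Summing over the finitely many standard parabolics and reassembling exactly as at the end of the proof of Theorem \ref{thm_average_of_non_trivial_selmer}, now with the factor $f(F_g)$ present and invoking part 3 of Proposition \ref{prop_existence_and_equality_of_local_squarefree_densities} together with the relation $\tau_G=\tau_G(G(\widehat{\cO}_K))\mu_G$, gives $\delta_B^{-1}q^{12(1-g_X)}\delta_V\int_{\cY_G}f(F_g)\,d\nu_G=\int_{F\in\cY_G}f(F)\,d\tau_G$, which is the assertion; taking $f=1$ recovers Theorem \ref{thm_average_of_non_trivial_selmer}. I do not anticipate any serious obstacle, since the argument is structurally identical to the one already carried out. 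The only genuinely new point is that, $f$ being merely bounded rather than nonnegative, one cannot just compute a pointwise limit of a monotone-type integrand; instead one must note that the relevant Harder--Narasimhan strata have finite $\nu_G$-mass and apply dominated convergence with the constant dominating function $\sup|f|$, and dually check that the cusp estimates — proved for the nonnegative quantity $|H^0(X,\cV_g(D))^{\text{sf,nt}}|$ — survive multiplication by $f$, which they do trivially.
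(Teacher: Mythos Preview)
Your proposal is correct and follows essentially the same route as the paper's own proof: both carry the weight $f(F_g)$ through the argument of Theorem \ref{thm_average_of_non_trivial_selmer}, use boundedness of $f$ together with the already-established nonnegative cusp estimates to kill the boundary terms, and invoke dominated convergence (with the constant $\sup|f|$, integrable because $\nu_G(\cY_G)<\infty$) for the main term. You are in fact slightly more explicit than the paper about why dominated convergence applies and why the cusp bounds survive multiplication by $f$.
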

\begin{proof}
Arguing as in the proof of Theorem \ref{thm_average_of_non_trivial_selmer}, we get
\[ (q-1) \sum_{(E, P, Q) \in \cX_D} \frac{ | \Aut(E, P, Q) |^{-1} \cdot |E(K)[2]|^{-1} \sum_{x \in \Sel_2( E)^\text{nt} } f(\inv x) } { | H^0(X, B_D)^\text{sf} | } = \int_{ g \in \cY_G} \frac{ | H^0(X, \cV_g(D))^\text{sf} | }{ | H^0(X, B_D)^\text{sf} |} f(F_g) \, d \nu_G \]
\[ = \sum_P \left[ \int_{\cY_{G, P}(D)^{> 2g_X - 2}}  f(F_g) \frac{|H^0(X, \cV_g(D))^\text{sf}|}{|H^0(X, B_D)|} \, d \nu_G \right. \] \[ \left. + \int_{\cY_{G, P}^\text{sp}}  f(F_g) \frac{|H^0(X, \cV_g(D))^\text{sf}|}{|H^0(X, B_D)|} \, d \nu_G + \int_{\cY_{G, P}^{< 0}} f(F_g) \frac{|H^0(X, \cV_g(D))^\text{sf}|}{|H^0(X, B_D)|} \, d \nu_G \right]. \]
Since $f$ is bounded, the same arguments as before show that the boundary terms vanish in the limit. On the other hand, the boundedness of $f$ means we can again apply the dominated convergence theorem to deduce that
\[ \lim_{\deg D \to \infty}\int_{\cY_P(D)^{> 2g_X - 2}} f(F_g)  \frac{|H^0(X, \cV_g(D))^\text{sf}|}{|H^0(\cX, B_D)|}\, d \nu_G = q^{12(1-g_X)} \delta_V \int_{\cY_{G, P}} f(F_g) \,  d \nu_G , \]
and these terms can then be regrouped to obtain the statement of the theorem.
\end{proof}

\bibliographystyle{alpha}
\bibliography{orbits}
\end{document}